\let\@fnsymbol\@arabic
\newcommand{\scrs}{\scriptstyle}
\newcommand{\eqn}[2]{\begin{equation}\label{#1}#2\end{equation}}
\newcommand{\eqnst}[1]{\begin{equation*}#1\end{equation*}}
\newcommand{\eqnspl}[2]{\begin{equation}\begin{split}\label{#1}%
   #2\end{split}\end{equation}}
\newcommand{\eqnsplst}[1]{\begin{equation*}\begin{split}%
   #1\end{split}\end{equation*}}
\def\es{\emptyset}
\def\eps{\varepsilon}
\def\bone{\mathbf{1}}
\def\tp{\widetilde{p}}
\def\bp{\mathbf{p}}
\def\p{\mathbf{p}}
\def\tbp{\mathbf{\widetilde{p}}}
\def\bP{\mathbf{P}}
\def\bq{\mathbf{q}}
\def\bbZ{\mathbb{Z}}
\newcommand{\Z}{{\mathbb Z}}
\def\Zd{{\bbZ^d}}
\def\T{\mathcal{T}}
\def\cT{\mathcal{T}}
\def\Reff{R_{\mathrm{eff}}}
\def\E{\mathbf{E}}
\def\conn{\leftrightarrow}
\def\cA{\mathcal{A}}
\def\cB{\mathcal{B}}
\def\cE{\mathcal{E}}
\def\cF{\mathcal{F}}
\def\cG{\mathcal{G}}
\def\cI{\mathcal{I}}
\newcommand{\I}{{\mathcal I}}
\def\tI{\tilde{I}}
\def\hI{\widehat{I}}
\def\cIW{\mathcal{IW}}
\def\cGgood{\mathcal{G}_{\mathrm{good}}}
\def\Ggood{\cGgood}
\newcommand{\Gtree}{\cG_{\mathrm{tree}}}
\def\cL{\mathcal{L}}
\def\cN{\mathcal{N}}
\def\tcN{\widetilde{\cN}}
\def\tr{\widetilde{r}}
\def\cR{\mathcal{R}}
\newcommand{\TS}{{\mathcal{TS}}}
\def\cU{\mathcal{U}}
\def\V{\mathcal{V}}
\def\cV{\mathcal{V}}
\def\tw{\widetilde{w}}
\def\cX{\mathcal{X}}
\newcommand{\XX}{{\mathcal X}}
\def\cY{\mathcal{Y}}
\newcommand{\YY}{{\mathcal Y}}
\def\ty{\widetilde{y}}
\def\tY{\widetilde{Y}}
\def\tz{\widetilde{z}}
\def\tZ{\widetilde{Z}}
\newcommand{\prob}{\mbox{\bf P}}
\newcommand{\lra}{\leftrightarrow}
\newcommand{\ch}[1]{#1}
\newcommand{\chnew}[1]{#1}
\newcommand{\chnnew}[1]{#1}
\theoremstyle{plain}
\newtheorem{theorem}{Theorem}[section]
\newtheorem{proposition}[theorem]{Proposition}
\newtheorem{lemma}[theorem]{Lemma}
\theoremstyle{definition}
\newtheorem{definition}[theorem]{Definition}
\numberwithin{equation}{section}
\title{Logarithmic correction to resistance}
\author{Antal A.~J\'arai\thanks{Department of Mathematical Sciences, 
University of Bath, Claverton Down, Bath, BA2 7AY, United Kingdom.
E-mail: {\tt A.Jarai@bath.ac.uk}} \and 
Dante Mata L\'opez\thanks{Department of Probability and Statistics, 
Centro de Investigaci\'on en Matem\'aticas A.C. Calle Jalisco S/N, C.P. 36240, 
Guanajuato, Mexico. E-mail: {\tt dante.mata@cimat.mx}}}
\begin{document}

\maketitle

\begin{abstract}
We study the trace of the incipient infinite oriented branching random walk 
in $\Zd \times \bbZ_+$ when the dimension is $d = 6$. Under suitable
moment assumptions, we show that 
the electrical resistance between the root and level $n$
is $O(n \log^{-\xi}n )$ for a $\xi > 0$ that does not depend 
on details of the model.\\
\emph{Key-words:} electrical resistance; branching random walk; anomalous diffusion\\
\emph{MSC 2020:} 60K50 (Primary) 60K35, 82C41, 31C20, 60J80 (Secondary)
\end{abstract}

\section{Introduction}

\chnew{
Consider a critical branching random walk in $\Zd$ conditioned to survive
forever, starting with a single individual at the origin $o$. 
The space-time points visited by this process can be turned 
into a multi-graph, by placing an edge between $(x,n)$ and $(y,n+1)$, 
whenever a particle located at $x$ at time $n$ produces an offspring 
located at $y$ at time $n+1$. We call this multi-graph the \emph{trace}. 
We regard the trace as an electrical network, where each edge has unit 
conductance. Let $R(n)$ denote the expected resistance between $(o,0)$ 
and level $n$ of the trace.

Barlow et al.~\cite[Example 1.8(iii)]{BJKS08} showed that when $d > 6$, 
one has $R(n) \asymp n$ 
Answering a question of \cite{BJKS08}, 
J\'arai and Nachmias \cite{JN14} showed that for $d \le 5$ one has
$R(n) = O (n^{1-\alpha})$ for a universal constant $\alpha>0$, under suitable
moment assumptions. In the present paper we show, under the same 
moment assumptions as in \cite{JN14}, that in $d = 6$ dimensions $R(n)$
is sub-linear by at least a logarithmic factor.
}


\begin{theorem}
\label{thm:expected}
Consider the trace of a branching random walk in dimension $d = 6$ 
with progeny distribution that is critical, has positive variance and finite 
third moment, conditioned to survive forever.
Assume that the random walk steps are symmetric, non-degenerate and 
have exponential tails. Then there exists a universal $\xi > 0$ such that 
\eqnst
{ R(n) 
  = O \left( n \, \log^{-\xi} n \right). }
\end{theorem}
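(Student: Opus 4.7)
The natural strategy is to adapt the multiscale approach of \cite{JN14} to the critical dimension $d=6$, where one can only hope for a marginal (logarithmic) gain per scale rather than a polynomial one. The starting point is Kesten's construction of the conditioned BRW as an immortal spine together with i.i.d.\ unconditioned critical BRWs hanging off each spine vertex; the spine by itself already gives the trivial bound $R(n) \le n$, and the aim is to show that there are enough \emph{coincidences}---pairs of distinct tree vertices lying at the same space--time point and hence identified in the trace---to reduce the effective resistance across the dyadic shell between levels $2^{k-1}$ and $2^k$ by a factor $1 - c/k$ for each $k$. Multiplying across the $\log_2 n$ such shells then yields the claimed $\log^{-\xi} n$ correction.

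The main technical input is a quantitative intersection estimate at the critical dimension. In $d=6$ the random-walk Green's function satisfies $G(x) \asymp |x|^{-4}$, and a first-moment calculation should show that two independent unconditioned critical BRWs of size $\sim 2^k$, rooted at sufficiently separated spine vertices, meet on average at $\Theta(k)$ common space--time sites. The third-moment hypothesis on the progeny distribution together with the exponential-tail assumption on the step law should provide enough control on the second moment of the corresponding intersection local time to get concentration of this count around its mean. A Nash--Williams or direct network-reduction argument then converts $\Theta(k)$ such shortcuts into a multiplicative resistance improvement of the claimed order across a single dyadic shell.

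The last step is to iterate across the $\log_2 n$ dyadic scales. Using the approximate independence between well-separated spine levels---which follows from the Markov structure of the Kesten tree together with a dyadic decomposition of the attached subtrees---one shows that the good event $\cG_k$ (sufficient coincidences in the $k$-th shell) occurs with probability bounded away from zero on a positive fraction of scales; taking expectation of the product bound and using a Borel--Cantelli/concentration argument on $\sum_k \bone_{\cG_k}$ then yields $R(n) \le n \log^{-\xi} n$ for a universal $\xi > 0$.

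The hard part, I expect, is the second step: the critical dimension is precisely where second-moment estimates for intersection local time become borderline, so extracting both a sharp logarithmic lower bound on the mean and a concentration strong enough to survive the product over $\log n$ scales is delicate. This is almost certainly where the finite third moment of the offspring law and the exponential step tails are used to their fullest. A secondary obstacle is ensuring that the coincidences genuinely occur between sufficiently ``separated'' branches of the Kesten tree---between close relatives they would not produce real shortcuts to the root---which forces a careful geometric decomposition of the spine and its subtrees before the intersection counting can be set up.
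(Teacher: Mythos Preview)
Your outline has the right ingredients—intersections create shortcuts, and one iterates across scales—but two of the steps fail as written.

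First, the moment structure in $d=6$ is not what you claim. The expected number of intersections between the two relevant trees of depth $\delta n$ (rooted at spatial distance $O(\sqrt{\delta n})$) is $\Theta(1)$, \emph{not} $\Theta(\log(\delta n))$, while the second moment is $\Theta(\log(\delta n))$ (Theorem~\ref{thm:I-moments}). So there is no concentration; Paley--Zygmund gives only $\bP(|\cI|>0)\ge c/\log(\delta n)$, and your event $\cG_k$ cannot hold with probability bounded away from zero. The paper upgrades this to $\bP\bigl(|\cI|\ge c\log(\delta n)\bigr)\ge c/\log(\delta n)$ via a two-layer argument: conditionally on one well-structured intersection, the number of nearby extra intersections has first moment $\Theta(\log(\delta n))$ and second moment $O(\log^2(\delta n))$ (Theorem~\ref{thm:hI-moments}), so a second Paley--Zygmund gives uniformly positive conditional probability of $\ge c\log(\delta n)$ intersections; this is then fed back through Proposition~\ref{prop:J-moments}.

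Second, and more structurally, a dyadic shell decomposition cannot produce a $\log^{-\xi}$ correction. Shells lie in series, so their resistances \emph{add}: even if the $k$-th shell contributed at most $2^{k-1}(1-c/k)$, summing gives only $R(n)\le n\bigl(1-O(1)/\log n\bigr)$, not $n\log^{-\xi}n$; the factors $(1-c/k)$ do not multiply across disjoint intervals. The paper instead runs an induction on $n$ for the quantity $\gamma(n,x)$ of Theorem~\ref{thm:main}. It partitions $[0,n]$ into $\sim\delta^{-1}$ blocks of a \emph{single} scale $\delta n$ and shows the expected resistance per block is at most $K\gamma(\delta n)\bigl(1-c/\log(\delta n)\bigr)$, using that the intersection event $\cB'(i,c'_0)$ has probability $\ge c/\log(\delta n)$ and yields a constant-factor reduction when it occurs (Theorems~\ref{thm:intersect} and \ref{thm:good-blocks}). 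Summing the blocks and invoking the inductive hypothesis $\gamma(\delta n)\le A\,\delta n\,(\log\delta n)^{-\xi}$ closes the induction once $\xi$ is chosen small enough that the gain $(1-c/\log(\delta n))$ beats the loss $\bigl((\log\delta n)/\log n\bigr)^{-\xi}$. The logarithmic improvement compounds through this recursion on the inductive hypothesis, not through a product over independent shells.
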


\chnew{
Let us explain some background and our motivation. Given an infinite
graph $G$, suitable bounds on the volume growth of $G$ and electrical 
resistances in $G$ provide quantitative information on the behaviour of the 
simple random walk on $G$, such as bounds on exit times from balls, and the
heat kernel; see e.g.~\cite{BJKS08}, \cite{KM08}. One can distinguish 
two regimes. 

On the one hand, several examples are known where the behaviour of
the walk is characterised by the so-called Alexander-Orbach (AO) exponents \cite{AO82}.
See \cite{K86} for the case of a critical branching tree conditioned to survive, 
and \cite{BK06} for more detailed estimates in the case of percolation on 
regular trees. The same exponents were also shown for: the incipient infinite
cluster of oriented (spread-out) percolation in dimensions $d+1 > 6+1$ \cite{BJKS08}; 
for the oriented critical branching random in dimension $d+1 > 6+1$ 
\cite[Example 1.8(iii)]{BJKS08} ;
for unoriented percolation under the triangle condition \cite{KN09}, \cite{HvHH14}; 
and for the uniform spanning forest in $d > 4$ and on non-amenable graphs \cite{H20}.
In all of these examples, the underlying graph is similar, in a quantifiable sense,
to a critical branching process, and the scaling limit of the walk is conjectured,
and in some cases rigorously known, to be Brownian motion on the 
continuum random tree, see e.g.~\cite{C08}, \cite{BCF19}.

On the other hand, there are `low-dimensional' examples where the random walk 
exponents are different (either conjecturally, or rigorously). 
See \cite{K86}, \cite{GL20} for the incipient infinite percolation cluster in 2D; 
see \cite{BCK17} for the uniform spanning tree in 2D and 
\cite{ACHS20} for the uniform spanning tree in 3D. Despite this 
progress, it is a major challenge, for example, to establish the scaling of 
resistances for 2D critical percolation. This is due to lack of a clear connection 
between resistance and conformally invariant quantities. 

A consequence of \cite{JN14} is that for the trace of oriented critical branching random walk, 
the AO exponents cannot hold in any dimension $d+1 \le 5+1$. This results from 
the fact that the resistance does not scale linearly: $R(n) = O( n^{1-\alpha} )$ 
for some $\alpha > 0$. Although branching random walk is one of the simplest 
statistical physics models, determining the exact behaviour of the resistance 
for $d \le 6$ is already very challenging in this case. To the best of our knowledge, 
no polynomial lower bound on $R(n)$ is known in $3 \le d \le 5$, where such is expected.
\chnnew{We note that since there are $O(k)$ edges connecting generations $k$ and $k+1$, a
Nash-Williams bound \cite[Section 2.5]{LP16} yields that $R(n) = \Omega(\log n)$ for all $d \ge 1$. 
We expect this bound to be sharp when $d = 1$, and possibly also in $d = 2$.}

In this paper we consider the trace of oriented branching 
random walk when $d = 6$, which is the conjectured critical dimension. 
By analogy with other statistical physics models, one expects a 
logarithmic correction: $R(n) \asymp n \log^{-\xi'} n$ for some $\xi' > 0$ 
\chnnew{that does not depend on the offspring or random walk distribution (as long as certain 
moment assumptions are satisfied).}  
Theorem \ref{thm:expected} provides an upper bound of this form. We expect that 
with additional work one can prove a lower bound of the same form (with an 
exponent $\xi'' < \infty$), and we outline a possible strategy for this in 
\chnnew{Section \ref{ssec:strategy}}. We also explain there why this is easier 
than proving meaningful lower bounds in $3 \le d \le 5$.


We follow a very similar setup to that of \cite{JN14}, in that we establish our
upper bound by showing that sufficiently many intersections are present in the trace
to reduce the resistance from $O(n)$ to $O(n \log^{-\xi}n)$. The main difference is
that in $d=6$ the intersections are more sparse than in $d \le 5$. In particular,
on each scale, there is only a logarithmically small probability to find intersecting
paths on that scale. Establishing this intersection estimate is more delicate compared 
to its analogue in \cite{JN14}, and some of the other estimates also need improvement.
\chnnew{We note that the definition of the trace we use requires parallel edges. This is 
necessary for applications of the parallel law (see \eqref{e:parallel} below). 
Should one replace any such parallel edges by a single edge, we expect the same results 
to hold, however, a version of \eqref{e:parallel} with weaker assumptions
would be needed to prove this.}
}



In order to facilitate the import of the setup from \cite{JN14}, we use the following 
convention: all notation that has the same meaning as in \cite{JN14} is identical in the
present paper, and notation that has closely related meaning is denoted by a prime.
For clarity of the proofs, we found it necessary to spell out even smaller changes compared to
\cite{JN14}. However, we do take some arguments without change from \cite{JN14}, and
hence familiarity with that paper is essential to understand our arguments.

\chnew{
\subsection{Assumptions}

Let $p(k)$, $k \ge 0$ be a progeny distribution that satisfies:\\
(i) $\sum_{k} k p(k) = 1$;\\
(ii) $\sum_{k} k (k-1) p(k) = \sigma^2 \in (0,\infty)$;\\
(iii) $\sum_k k^3 p(k) \le C_3 < \infty$.\\
The \emph{incipient infinite branching process} is obtained by conditioning on
survival up to time $n$ and taking the weak limit as $n \to \infty$.
The limiting object admits the following alternative construction
\cite{K86}, \cite{LPP95}. Consider an infinite path $(V_0, V_1, \dots)$, and attach
to each $V_i$, independently, a branching tree that in its first generation follows 
the size-biased distribution:
\eqnst
{ \tp(k) 
  = (k+1) p(k+1), \quad k \ge 0, }
and follows $p$ afterwards.

Let $\bp^1(x,y)$ be a one-step random walk transition probability in $\Zd$ that satisfies:\\
(i) $\sum_{x \in \Zd} e^{b|x|} \bp^1(o,x) < \infty$ for some $b > 0$; \\
(ii) $\{ x \in \Zd : \bp^1(o,x) > 0 \}$ generates $\Zd$ as a group;\\
(iii) $\bp^1(x,y) = \bp^1(y,x)$.\\
We will denote by $\bp^n(x,y)$ the $n$-step transition probabilities.
The \emph{incipient infinite branching random walk} is obtained by first drawing 
a sample $\cT$ of the incipient infinite branching process, and then applying
a random walk map $\Phi : \cT \to \Zd \times \bbZ_+$ defined
as follows. We initialize $\Phi$ by requiring that the root $\rho$ of
$\cT$ is mapped to $(o,0)$. Then, recursively, if $\{ U,V \}$ is an edge of 
$\cT$ between generations $n$ and $n+1$, such that $\Phi(U) = (x,n)$ has already 
been defined, we set $\Phi(V) = (y,n+1)$ with the displacement $y-x$ chosen 
according to $\bp^1(x,y)$, independently between different edges. 
By the \emph{trace} of the branching random walk we mean the multi-graph with 
vertex set $\Phi(\cT)$, and edge set consisting of $\{ \Phi(U), \Phi(V) \}$
for every edge $\{ U, V \}$ of $\cT$.
}

\subsection{Electrical resistance}

For background on electrical resistance, see \cite{LP16}. We denote by 
$\Reff(x \conn y)$ the effective resistance between vertices $x$ and $y$.
We will frequently use the triangle inequality:
\eqn{e:triangle}
{ \Reff (x \conn z)
  \le \Reff (x \conn y) + \Reff(y \conn z). }
We will also use the following \emph{parallel law}: if $G_1 = (V,E_1)$ and
$G_2 = (V,E_2)$ are graphs on the same vertex set but with disjoint 
edge sets, and $G = (V,E_1 \cup E_2)$, and if $R_1 = \Reff(x \stackrel{G_1}{\conn} y)$,
$R_2 = \Reff(x \stackrel{G_2}{\conn} y)$, then  
\eqn{e:parallel}
{ \Reff(x \stackrel{G}{\conn} y)
  \le \left( \frac{1}{R_1} + \frac{1}{R_2} \right)^{-1}
  \le \frac{1}{4} (R_1 + R_2), }
where the second inequality uses that the harmonic mean is at most the
arithmetic mean.

\subsection{A finite approximation}
\label{ssec:finite-approx}

Given $n \ge 1$ and $m \ge 2n$, we define the random tree 
$\cT_{n,m}$ as in \cite{JN14}:\\
(i) consider a backbone $V_0, \dots, V_n$ with marked root $\rho = V_0$;\\
(ii) attach to each $V_i$ a critical tree that has distribution $\tp$ in 
the first step, and $p$ afterwards, and is conditioned to die out by time
$m-i$.\\

We define $\gamma(n,x)$ as in \cite{JN14}:
\eqnst
{ \gamma(n,x)
  = \sup_{m \ge 2n} \E_{\cT_{n,m}} \Big[ \Reff \left( (o,0) \conn \Phi(V_n) \right) \Big| 
    \Phi(V_n) = (x,n) \Big], \quad x \in \Zd. }
\ch{The following theorem is our main technical result, and is an analogue of 
\cite[Theorem 1.2]{JN14}. It is expressed in terms of the norm:} 
\eqnst
{ \| x \| 
  := \sqrt{\frac{1}{d} \sum_{i,j = 1}^d x_i Q^{-1}_{ij} x_j}, }
where $Q_{ij} = \sum_{x \in \Zd} x_i x_j \bp^1(o,x)$ is the covariance 
matrix of the random walk step distribution.

\begin{theorem}
\label{thm:main}
Assume $d=6$. There exists a universal constant $\xi \in (0,1/2)$ and 
$A = A(\sigma^2, C_3, \bp^1) < \infty$ such that for all $n \ge 2$ we have
\eqnst
{ \gamma(n,x)
  \le \begin{cases}
      A n (\log n)^{-\xi} & \text{when $\| x \| \le \sqrt{n}$;} \\
      A n (\log n)^{-\xi} \, 
          \left( 1 -  \frac{\log \left( \|x\|^2 / n \right)}{\log n} \right)^{-\xi} 
          & \text{when $\sqrt{n} < \|x\| \le n/2$;} \\
      A n & \text{when $\|x\| >  n/2$.}
      \end{cases} }
\end{theorem}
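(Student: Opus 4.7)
The plan is to prove Theorem~\ref{thm:main} by a dyadic multi-scale induction on $n$, following the architecture of \cite{JN14} but adjusted to reflect that $d=6$ is the critical dimension. Writing $N = \log_2 n$, I aim to establish a recursion of the form
$$\gamma(n, x) \le 2\,\gamma(n/2, x') \left(1 - \tfrac{c_0}{N}\right) + (\text{lower-order corrections})$$
valid whenever $\|x\| \le \sqrt n$, for some universal $c_0 > 0$. Iterating over the $\asymp N$ dyadic levels gives $\gamma(n,x) \le An \prod_{k \le N}(1 - c_0/k) \asymp An / N^{c_0}$, yielding the theorem with $\xi = c_0$. In $d \le 5$, the analogue in \cite{JN14} produces a \emph{constant} gain $(1 - c_0)$ per scale and hence a polynomial saving; in $d = 6$ the gain shrinks to $c_0/N$ per scale, leaving a mere logarithmic saving.

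The crucial input, which I expect to be the main obstacle, is an intersection estimate at scale $n$: given two independent subtrees $\cT, \cT'$ of $\cT_{n,m}$ hanging off backbone vertices at distance $\asymp n$, one must show
$$\mathbf{P}\bigl[\Phi(\cT) \cap \Phi(\cT') \ne \emptyset\bigr] \ge \frac{c_0}{\log n}.$$
In $d=6$, the expected number of intersection pairs in a diffusive window of spatial radius $\sqrt n$ is of order $\log n$ — this logarithm is the signature of the critical dimension. A first-moment identity therefore bounds the intersection probability above by $O(1/\log n)$, and the matching lower bound requires a truncated second-moment argument in which the count of intersection pairs is restricted to a carefully chosen space-time slab so that occasional high-multiplicity intersections do not inflate the variance. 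Executing this needs sharp three- and four-point Green's function estimates for the incipient branching random walk, in the spirit of Kesten \cite{K86} and as developed further in \cite{BJKS08}, \cite{JN14}; the critical scaling means the tolerance for error in these estimates is much smaller than in \cite{JN14}, and this is where most of the new technical work is.

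Granted the intersection estimate, I would apply the parallel law \eqref{e:parallel} to combine two otherwise edge-disjoint routes from $(o,0)$ to $\Phi(V_n)$ — one running up the backbone, the other detouring through $\cT$, across the intersection point, and down $\cT'$ back to the backbone — producing a cycle in parallel with a macroscopic piece of the backbone. Each route has resistance controlled by the inductive hypothesis applied at the smaller scale together with the triangle inequality \eqref{e:triangle}, and the parallel combination yields a universal constant-factor saving. Averaging the constant-factor saving (occurring with probability $c_0/\log n$) against no improvement on the complementary event gives the factor $(1 - c_0/N)$ in the recursion, with the residual terms absorbed by standard exponential-tail large-deviation estimates for the backbone walk $\Phi(V_0), \dots, \Phi(V_n)$ conditioned on its endpoint.

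Finally, the degradation of the bound for $\sqrt n < \|x\| \le n/2$ is handled by tracking the tilt imposed on the backbone by the conditioning $\Phi(V_n) = (x, n)$. The conditioning biases the step distribution, and the good intersection events can only be invoked at those dyadic scales $2^k$ for which a diffusive window of radius $\sqrt{2^k}$ still has a non-negligible chance of hosting the intersection under the tilt — roughly $2^k \gtrsim \|x\|^2/n$, i.e.\ $k \gtrsim \log(\|x\|^2/n) + \log(n/2^N)$. The available product $\prod_k(1 - c_0/k)$ is thus truncated to $\log(\|x\|^2) \le k \le N$, which reproduces the stated $(1 - \log(\|x\|^2/n)/\log n)^{-\xi}$ penalty. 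For $\|x\| > n/2$ no dyadic scale survives the truncation and we fall back on the trivial bound $\gamma(n,x) \le An$ from the backbone alone.
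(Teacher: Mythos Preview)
Your proposal captures the right qualitative picture --- an induction in which each step contributes a gain of order $1/\log n$ rather than a constant --- but it has a genuine gap at the parallel-law step, and it underestimates what is needed for the ``lower-order corrections''.

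\textbf{The main gap.} Knowing only that $\bP[\Phi(\cT)\cap\Phi(\cT')\neq\es]\ge c_0/\log n$ is not enough to extract a constant-factor saving. To apply the parallel law you must bound the expected resistance of the detour \emph{conditionally} on the intersection event, and this conditioning size-biases the trees. Concretely, writing $\cR$ for the shortcut resistance and routing through a uniformly chosen intersection pair $(U_1,U_2)\in\cI$, the tree decomposition (Lemma~\ref{lem:distrib}) and the induction hypothesis give
\[
\E\bigl[\cR\,\bone_{|\cI|\ge 1}\bigr]
\ \le\ \E\Bigl[\tfrac{1}{|\cI|}\sum_{(U_1,U_2)\in\cI}(\cR_1+\cR_2)\Bigr]
\ \le\ \E\Bigl[\sum_{(U_1,U_2)\in\cI}(\cR_1+\cR_2)\Bigr]
\ \le\ C\,\max_k\gamma(k)\cdot\E|\cI|.
\]
Dividing by $\bP(|\cI|\ge 1)\ge c/\log n$ leaves $\E[\cR\mid |\cI|\ge 1]\le C\max_k\gamma(k)\cdot\log n$, a factor $\log n$ too large; the parallel law then saves only an $O((\log n)^{-1})$ fraction of the direct resistance, and your recursion degrades to $(1-c_0/(\log n)^2)$ per scale --- no logarithmic improvement survives. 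The paper closes this gap by proving the much stronger estimate $\bP\bigl(|\cI|\ge c'_0\log(\delta n)\bigr)\ge c'_1/\log(\delta n)$ (Theorem~\ref{thm:intersect}); on that event $1/|\cI|\le 1/(c'_0\log(\delta n))$, and this extra factor exactly cancels the blow-up (Lemma~\ref{lem:short-cut}). Proving that \emph{many} intersections occur with probability $\ge c/\log n$ cannot be done by a single Paley--Zygmund step --- in the paper's setup $\E|\cI|\asymp 1$ and $\E|\cI|^2\asymp\log n$ (your first-moment claim is off by a logarithm). The paper instead bootstraps: conditional on one well-placed intersection, the number of ``good extra'' intersections has first moment $\asymp\log n$ and second moment $\asymp(\log n)^2$ (Theorem~\ref{thm:hI-moments}), and Paley--Zygmund is applied to an auxiliary counting variable $J$ (Proposition~\ref{prop:J-moments}). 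This conditional second-moment bootstrap is the technical heart of the paper and is absent from your sketch.

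\textbf{A second gap.} The ``lower-order corrections'' are not handled by large-deviation estimates for the backbone alone. In \cite{JN14} the bad-block analysis produces error factors $(1+C_4\delta)$, harmless when the per-scale gain is a constant but fatal in $d=6$ where the gain is only $c/\log n$. The paper eliminates these factors via a new FKG/stochastic-monotonicity argument for the off-backbone trees (Lemma~\ref{lem:monoton} and the sharpened Lemmas~\ref{lem:old-Lemma4.1},~\ref{lem:old-Lemma4.3}); without it the induction does not close. Structurally, the paper also differs from your dyadic scheme: it fixes a small constant $\delta$, partitions $[0,n]$ into $\asymp\delta^{-1}$ blocks of length $K\delta n$, and recurses from scale $n$ directly to scale $\delta n$; the $\|x\|$-dependence enters through the choice $\delta\approx\min\{\delta_0,\,n/\|x\|^2\}$ rather than by truncating a dyadic product.
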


\begin{proof}[Proof of Theorem \ref{thm:expected} assuming Theorem \ref{thm:main}.]
As in \cite{JN14}, we have 
\eqnsplst
{ R(n) 
  &\le \lim_{m \to \infty} \E_{\cT_{n,m}} \Big[ \Reff \left( (o,0) \conn \Phi(V_n) \right) \Big] \\
  &\le \sup_{m \ge 2n} \E_{\cT_{n,m}} \Big[ \Reff \left( (o,0) \conn \Phi(V_n) \right) \Big] \\
  &= \sum_{x \in \Zd} \bp^n(o,x) \, \gamma(n,x). }
From the bound in Theorem \ref{thm:main} we have
\eqnsplst
{ R(n) 
  &\le A n (\log n)^{-\xi} \Bigg[  \sum_{x : \|x\| \le \sqrt{n}} \bp^n(o,x)
      + \sum_{x : \sqrt{n} < \|x\| \le n^{3/4}} \bp^n(o,x) \, C \\
  &\qquad\quad + \sum_{x : n^{3/4} < \|x\| \le n/2} \bp^n(o,x) \, C \, (\log n)^\xi
      + \sum_{x : n/2 < \|x\|} \bp^n(o,x) \, (\log n)^\xi \Bigg]. }
The first two sums are bounded by $1$ and $C$, respectively.
In the third and fourth sums we use that 
\eqnst
{ (\log n)^\xi
   \le C \, n^{1/2}
   \le C \, \frac{\|x\|^2}{n}, \quad \text{when $\|x\| > n^{3/4}$.} }
This gives the upper bound, \chnnew{since}
\eqnst
{ C \sum_x \bp^n(o,x) \, \frac{\|x\|^2}{n}
  = C. }
\end{proof}

\chnew{
\subsection{Strategy for a lower bound}
\label{ssec:strategy}

Let us now present a possible strategy for a lower bound on $R(n)$ in $d = 6$.
Consider independent copies $\cT_1$ and $\cT_2$ of $\cT_{n,2n}$,
and random walk mappings $\Phi_1$ and $\Phi_2$ initialized by $\Phi_1(\rho_1) = (o,0)$
and $\Phi_2(\rho_2) = (x,0)$, where $\| x \| \asymp \sqrt{n}$. The first and second
moments of the number of intersections between $\Phi_1(\cT_1)$ and $\Phi_2(\cT_2)$
are $\asymp 1$ and $\asymp \log n$, respectively, in $d = 6$, as we will see.
A key estimate we prove in this paper is the following (presented somewhat informally 
at this stage):
\eqn{e:intersection-basic}
{ \bP ( \# \text{intersections of $\Phi_1(\cT_1)$ and $\Phi_2(\cT_2)$} \ge c \log n) 
  \ge \frac{c}{\log n}. }
Suppose for what follows that one could complement this with the bound:
\eqn{e:non-intersection}
{ \bP ( \Phi_1(\cT_1) \cap \Phi_2(\cT_2) = \es ) 
  \ge 1 - \frac{C}{\log n}. }
Consider the event that in $\Phi(\cT_{2n,m})$, the edge $\{ \Phi(V_{n}), \Phi(V_{n+1}) \}$ 
is pivotal for connecting $(o,0)$ to level $2n$ of the trace. Let $\mathrm{Tr}_k$ denote 
the $\Phi$-image of all critical trees attached to the path $\{ V_{n-2^k}, \dots, V_{n-2^{k-1}} \}$,
$k = 1, 2, \dots, \log_2 n$, and let $\widetilde{\mathrm{Tr}}$ denote the $\Phi$-image
of all critical trees attached to the path $\{ V_{n+1}, \dots, V_{2n} \}$. By the 
FKG inequality, and a heuristic based on \eqref{e:non-intersection}, we get
\eqnst
{ \bP ( \text{$\{ \Phi(V_{n}), \Phi(V_{n+1}) \}$ is pivotal} )
  \ge \prod_{k=1}^{\log_2 n} \bP ( \mathrm{Tr}_k \cap \widetilde{\mathrm{Tr}} = \es )
	\ge \prod_{k=1}^{\log_2 n} \left( 1 - \frac{C}{\log 2^k} \right)
	\ge c \, (\log n)^{-O(1)}. }
This would imply that there are at least $ n \log^{-\xi''} n$ pivotals along the backbone 
of $\cT_{2n,m}$, and hence a lower bound on the resistance.
For the same ideas to be fruitful in $3 \le d \le 5$, one would also need the exponent
to be $< 1$, not only $O(1)$.
}

\subsection{Random walk estimates}

The following proposition collects some random walk estimates we 
take without change from \cite{JN14}. Let $S(n)$, $n \ge 0$ denote
a random walk with $S(0) = o$, and step distribution $\bp^1$.

\begin{proposition}[{\cite[Proposition 1.3]{JN14}}]
\label{prop:variance}
There exists $k_1 = k_1(\bp^1), C > 0$ \ch{and $\delta_1 = \delta_1(d) > 0$} 
such that the following hold.\\
\begin{itemize}
\item[(i)] Whenever $k_1 \le k \le n$, $\| x \| \le 4n/\sqrt{k}$, we have
\eqnst
{ \E \Big[ \| S(k) \|^2 \,\Big|\, S(n) = x \Big]
  \le C k. }
\item[(ii)] Whenever $k_1 \le k \le \delta_1 n$, $\| x \| \le 4n / \sqrt{k}$, we have
\eqnst
{ \E \Big[ \| S(k) \|^2 \,\Big|\, S(n) = x,\, \| S(k) \| > \sqrt{k} \Big]
  \le C k. }
\item[(iii)] Whenever $k_1 \le k \le \delta_1 n$, $k_1 \le k' \le n-k$ and
$\| x \| \le \min \{ 4n/\sqrt{k},\, 4n/\sqrt{k'} \}$, we have
\eqnst
{ \E \Big[ \| S(k+k') - S(k) \|^2 \,\Big|\, S(n) = x,\, \| S(k) \| > \sqrt{k} \Big]
  \le C k'. }
\end{itemize}
\end{proposition}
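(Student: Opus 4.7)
The plan is to derive all three estimates from the local central limit theorem (LCLT) applied to random walk bridges. The exponential tail assumption is essential here: the hypothesis $\|x\| \le 4n/\sqrt{k}$ allows $\|x\|^2/n$ to be of order $n/k$, which can be as large as $O(n/k_1)$ when $k$ is of constant order. Standard Gaussian LCLT is therefore insufficient, and one must use a local Cram\'er-type estimate, obtained via an exponential tilt of $\bp^1$. This gives uniform asymptotics of the form $\bp^n(o,x) = (c+o(1))\, n^{-d/2} \exp(-n\, I(x/n))$ for $\|x\|$ up to a fixed fraction of $n$, where $I$ is the Cram\'er rate function of the step distribution (finite and smooth in a neighbourhood of $o$, and quadratic there).

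For part (i), I would use Bayes' formula,
\[
\bP\bigl(S(k)=y \bigm| S(n)=x\bigr)
   = \frac{\bp^k(o,y)\,\bp^{n-k}(y,x)}{\bp^n(o,x)},
\]
and exploit the above uniform LCLT to show that, writing $y = (k/n)x + z$, the conditional distribution of $z$ is dominated by a Gaussian of covariance of order $(k(n-k)/n)\, Q$. Consequently,
\[
\E\bigl[\|S(k)\|^2 \bigm| S(n)=x\bigr]
   \le 2(k/n)^2 \|x\|^2 + C\,\tfrac{k(n-k)}{n},
\]
and the first term is at most $32 k$ by the hypothesis $\|x\| \le 4n/\sqrt{k}$, while the second is at most $Ck$, yielding (i).

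Part (ii) then follows from (i) by dividing by a lower bound on $\bP(\|S(k)\| > \sqrt{k} \mid S(n)=x)$. The same LCLT gives that, for $k \le \delta_1 n$ with $\delta_1$ small enough, the conditional law of $S(k)$ at scale $\sqrt{k}$ has Gaussian-type density with covariance bounded below by $(1-\delta_1) Q$ and mean $(k/n)x$ of bounded norm, so this probability is at least a positive constant $c_0$. For part (iii), I would condition further on $S(k)=y$: by the Markov property, the increment $S(k+k')-S(k)$ is the time-$k'$ marginal of a bridge of length $n-k$ from $o$ to $x-y$. An LCLT analysis of this shorter bridge, adapted to use the full constraint $\|x\| \le 4n/\sqrt{k'}$ on the original endpoint (rather than the a priori weaker $\|x-y\| \le 4(n-k)/\sqrt{k'}$, which need not hold), yields $\E[\|S(k+k')-S(k)\|^2 \mid S(k)=y,\, S(n)=x] \le C k'$ for $y$ in the bulk of the conditional law. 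Averaging over $y$, while using the tail bound on $\|S(k)\|$ coming from (i) via Markov's inequality to handle atypically large $\|y\|$, closes the argument.

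The principal obstacle is precisely the uniform local Cram\'er estimate on the range $\|x\| \lesssim n$: the standard Berry--Esseen LCLT only gives Gaussian asymptotics up to $\|x\| \lesssim \sqrt{n \log n}$, so one must exponentially tilt $\bp^1$ by a parameter depending on $x/n$, apply LCLT to the tilted walk (whose mean is $x/n$ and whose covariance is a smooth perturbation of $Q$), and then untilt to recover the two-sided estimate. Tracking the dependence of the constants on this tilt uniformly for $\|x\|$ across the admissible range is the technical heart of the matter, and is what the authors cite verbatim from \cite{JN14}.
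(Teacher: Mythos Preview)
The paper does not prove this proposition: it is imported verbatim from \cite{JN14} (as the paper states immediately before the statement), so there is no proof here to compare against. Your sketch is therefore being compared to the proof in \cite{JN14}, which indeed proceeds via a tilted (Cram\'er-type) local limit theorem, exactly as you identify. Your diagnosis of why the exponential tail hypothesis is essential --- namely that $\|x\|$ is allowed to be of order $n$ when $k$ is of constant order, putting $x$ in the large-deviation regime for $S(n)$ --- is correct, and is the reason the proof cannot be reduced to the Gaussian LCLT alone.

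Your outline for (i) and (ii) is sound. The bridge mean $(k/n)x$ has norm at most $4\sqrt{k}$ under the hypothesis, and the bridge fluctuation scale is $\sqrt{k(n-k)/n}\le\sqrt{k}$, so both contributions to the second moment are $O(k)$; for (ii), the restriction $k\le\delta_1 n$ ensures the bridge variance is at least $(1-\delta_1)k$, so the event $\{\|S(k)\|>\sqrt{k}\}$ has conditional probability bounded below. For (iii) your plan is also the right one, though the step you flag as delicate --- controlling the contribution from atypically large $\|S(k)\|$ --- does require a genuine argument (one cannot simply apply (i) to the shorter bridge because the needed hypothesis on the endpoint $x-y$ may fail). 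In \cite{JN14} this is handled by working directly with the tilted-LCLT density estimates rather than by a black-box reduction to (i); your Markov-inequality suggestion would also work once paired with an a~priori bound on $\E[\|S(k+k')-S(k)\|^4 \mid S(n)=x]$ from the same tilted LCLT.
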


Let us write
\eqnst
{ D 
  := \det(Q)^{1/2d}. }
The following lemma is a special case of \cite[Lemma 1.4]{JN14} (take $\beta = 0$ there).

\begin{lemma}
\label{lem:LCLT}
There exists $C = C(d)$ such that the following hold.\\
(i) There exists $n_1 = n_1(\p^1)$ such that for all $y \in \Z^d$ we have
\eqn{e:beta-LCLT-ub}
{ \prob ( S(n) = y )
  \le \frac{2 C}{D^d n^{d/2}} \, ,}
when $n \ge n_1$.\\
(ii) For any $0 < \eps < 1$ and $0 < L < \infty$ there exists
$n_2 = n_2(\p^1,\eps,L)$ such that for all $y \in \Z^d$
such that $\| y \| \le L \sqrt{n}$ we have
\eqnspl{e:beta-LCLT}
{ \prob ( S(n) = y )
  &\le \frac{C(1 + \eps)}{D^d n^{d/2}} e^{ - d \| y \|^2 / (2 n)}\, , \\
  \prob ( S(n) = y )
  &\ge \frac{C(1 - \eps)}{D^d n^{d/2}} e^{ - d \| y \|^2 / (2 n)}\, .}
when $n \ge n_2$.
\end{lemma}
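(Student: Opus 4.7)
\textbf{Proof proposal for Lemma \ref{lem:LCLT}.}
The plan is to combine Fourier inversion with an exponential tilt. Let $\phi(\theta) = \sum_{x \in \Zd} e^{i\theta \cdot x} \bp^1(o,x)$ denote the characteristic function of the step distribution; by the exponential-tail assumption, $\phi$ extends analytically to a complex neighbourhood of the torus. By Fourier inversion,
\[
\prob(S(n) = y) = \frac{1}{(2\pi)^d} \int_{[-\pi,\pi]^d} e^{-i\theta \cdot y}\, \phi(\theta)^n \, d\theta.
\]

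For part (i), I would split $[-\pi,\pi]^d$ into a small ball $U_\delta = \{|\theta| \le \delta\}$ and its complement. Symmetry and finite variance give $\phi(\theta) = 1 - \tfrac12 \theta^T Q \theta + O(|\theta|^3)$, so for $\delta$ small enough $|\phi(\theta)^n| \le e^{-c n |\theta|^2}$ on $U_\delta$; a change of variables $\theta \mapsto \theta/\sqrt{n}$ then yields a contribution of order $(D^d n^{d/2})^{-1}$. On the complement, the group-generating assumption (ii) ensures aperiodicity, so $\sup_{\theta \notin U_\delta} |\phi(\theta)| \le 1 - \eta$ for some $\eta > 0$, and this part contributes exponentially less. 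Combining the two pieces proves the uniform bound, for $n \ge n_1$ depending on the rate at which the error terms become negligible.

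For part (ii) I would tilt. The exponential tails ensure $M(\lambda) := \sum_x e^{\lambda \cdot x} \bp^1(o,x)$ is analytic near $0 \in \mathbb{R}^d$, and by symmetry $\log M(\lambda) = \tfrac12 \lambda^T Q \lambda + O(|\lambda|^3)$. The equation $\nabla \log M(\lambda) = y/n$ then has a unique solution $\lambda = Q^{-1} y/n + O(\|y\|^2/n^2)$, which is small whenever $\|y\| \le L\sqrt{n}$. A direct Taylor expansion and the identity $y^T Q^{-1} y = d \|y\|^2$ yield
\[
n \log M(\lambda) - \lambda \cdot y = -\frac{d \|y\|^2}{2n} + O\!\left(\frac{\|y\|^3}{n^2}\right) = -\frac{d\|y\|^2}{2n} + O(n^{-1/2}),
\]
uniformly in $\|y\| \le L \sqrt{n}$. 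Applying the tilt identity
\[
\prob(S(n) = y) = M(\lambda)^n e^{-\lambda \cdot y}\, \prob_\lambda(S(n) = y),
\]
and invoking part (i), applied to the centred tilted walk with step distribution $e^{\lambda \cdot x}\bp^1(o,x)/M(\lambda)$ at the point $0$ (its mean under $\prob_\lambda$), gives $\prob_\lambda(S(n) = y) = (C + o(1))/(D^d n^{d/2})$, since the tilted covariance converges to $Q$ as $\lambda \to 0$. Multiplying out yields both the upper and the lower bound.

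The main obstacle is ensuring that all error terms --- the cubic remainder in $\log M(\lambda)$, the correction to the tilted covariance, and the LCLT error for the tilted walk --- are uniform in $\lambda$ across the compact range of tilts corresponding to $\|y\| \le L\sqrt{n}$, so that together they combine into a single multiplicative factor $(1 \pm \eps)$ for $n$ large enough (depending on $\eps$ and $L$). This uniformity is made possible precisely by the exponential tail hypothesis, which provides a uniform spectral gap $1 - \sup_{|\theta| > \delta} |\phi_\lambda(\theta)|$ for the tilted characteristic functions as $\lambda$ ranges over a compact set around $0$.
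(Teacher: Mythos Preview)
The paper does not prove this lemma: it is stated as a special case of \cite[Lemma 1.4]{JN14} (take $\beta = 0$ there), with no argument given. Your sketch is a correct and standard derivation of the local CLT via Fourier inversion plus exponential tilting, and is in fact essentially the method behind the cited result --- the parameter $\beta$ in \cite{JN14} is a tilt parameter, so their Lemma~1.4 packages precisely the tilted estimate you develop here.

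One small imprecision: in part (ii) you write that ``part (i), applied to the centred tilted walk \dots at the point $0$'' gives $\prob_\lambda(S(n) = y) = (C + o(1))/(D^d n^{d/2})$, but part (i) as stated is only an upper bound. What you actually need is the two-sided LCLT at the mean of the tilted walk; this follows from the same Fourier argument you outline for (i) (the integral over $U_\delta$ \emph{converges} to the Gaussian constant, not merely is bounded by it), so it is a routine sharpening rather than a gap. The uniformity in $\lambda$ over compact sets that you flag at the end is indeed the key technical point, and the exponential-tail assumption is exactly what delivers it.
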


Above we assumed that the random walk has period $1$. Trivial modifications 
can be made when the period is $2$, and we will not make this explicit in
our arguments.

Below we collect a few more frequently used facts. These are all
standard (see \cite[Section 1.5]{JN14} for more information).
There exists a constant $c > 0$ such that we have
\eqn{e:volume-lower-bound}
{ \sum_{x : \| x \| \le L} 1 
  \ge c D^d L^d, \quad L \ge 1. }
When $d \ge 3$, the Green function $G(x) := \sum_{n=0}^\infty \bp^n(o,x)$ satisfies
\eqn{e:Green-ub}
{ G(x) 
  \le \frac{C(d)}{D^d} \| x \|^{2-d}, \quad
      \text{when $\| x \| \ge L_1 = L_1(\bp^1)$.} }

\section{Induction Scheme}

We set up the induction scheme as in \cite{JN14}, apart from the 
definition of the event $\cB(i,c_0)$. For convenience of the reader, 
we provide Definitions \ref{udp}--\ref{intersect} below, that are from \cite{JN14}. 
Given an instance of $\cT_{n,m}$, consider a 
small $\delta > 0$, such that $\delta n$ is an integer. We write
\eqnst
{ X_i = V_{i \delta n}, \quad i = 0, 1, \dots, \lfloor \delta^{-1} \rfloor, }
and write $x_i \in \Zd$, $i = 0, 1, \dots, \lfloor \delta^{-1} \rfloor$ for the
spatial location of $X_i$, so that $\Phi(X_i) = (x_i, i \delta n)$.
Write $\cT_{n,m}(\ell)$ for the subtree of $\cT_{n,m}$ emanating from $V_\ell$ off
the backbone (including $V_\ell$). 

Fix an integer $K$ and write $n = N K \delta n + K' \delta n + \tilde{n}$, with 
$0 \le K' < K$ an integer and $\delta n \le \tilde{n} < 2 \delta n$.
The definitions to follow are illustrated in Figure \ref{fig:K-tree-good}.

\begin{definition} 
\label{udp} 
For $\ell$ satisfying $i \delta n \leq \ell < (i+1)\delta n$
we say that a backbone vertex $V_\ell$ has the {\em unique descendant property} (UDP) if
among its descendants at level $(i+1) \delta n$ in $\cT_{n,m}(\ell)$ there is a unique
one that reaches level $(i+2)\delta n$. For any other vertex $V$ of $\cT_{n,m}$ at level
$i \delta n$ we say that $V$ has UDP if among its descendants at level $(i+1)\delta n$
there is a unique one that reaches level $(i+2)\delta n$.
\end{definition}

\begin{definition}
\label{ktreegood}
Given an integer $K \geq 1$, a number $\delta>0$ such that $K \delta \le (1/2)$
and an instance of $\cT_{n,m}$ we say that a
sequence $(i, i+1, \ldots, i+K)$ of length $K+1$ is $K$-{\em tree-good} if the following holds:
\begin{enumerate}
\item[(1)] There exists a unique $i \delta n \le \ell_1 < (i+1) \delta n$ such
that $\cT_{n,m}(\ell_1)$ reaches height $(i+2)\delta n$. Moreover,
this unique $\ell_1$ satisfies $(i+1/4) \delta n \leq \ell_1 \leq (i+3/4)\delta n$.
\item[(2)] $V_{\ell_1}$ has UDP.
We call the unique descendant $\YY_{i+1}$.
For all $i'$ satisfying $i+2 \leq i' \leq i+K$ we inductively define the vertices $\YY_{i'}$ of
$\cT_{n,m}(\ell_1)$ as follows. We require that $\YY_{i'-1}$ has UDP
and call the unique descendant $\YY_{i'}$.
\item[(3)] There exists a unique $(i+K-1)\delta n \le \ell_2 < (i+K) \delta n$ such that
$\cT_{n,m}(\ell_2)$ reaches height $(i+K+1)\delta n$. Moreover, this unique $\ell_2$ satisfies
$(i+K-3/4) \delta n \leq \ell_2 \leq (i+K-1/4) \delta n$.
\item[(4)] $V_{\ell_2}$ has UDP, and we call the unique descendant
$\XX'_{i+K}$. The vertex $\XX'_{i+K}$ has UDP,
and we call the unique descendant $\XX'_{i+K+1}$.
Similarly, $\YY_{i+K}$ has UDP,
and we call the unique descendant $\YY_{i+K+1}$.
\end{enumerate}
\end{definition}

\psfrag{Xi}{$\scrs{X_i}$}
\psfrag{Vell1}{$\scrs{V_{\ell_1}}$}
\psfrag{Vell1+}{$\scrs{V_{\ell_1}^+}$}
\psfrag{Xi+1}{$\scrs{X_{i+1}}$}
\psfrag{Xi+2}{$\scrs{X_{i+2}}$}
\psfrag{Xi+K-1}{$\scrs{X_{i+K-1}}$}
\psfrag{Vell2}{$\scrs{V_{\ell_2}}$}
\psfrag{Vell2+}{$\scrs{V_{\ell_2}^+}$}
\psfrag{Xi+K}{$\scrs{X_{i+K}}$}
\psfrag{X'i+K}{$\scrs{\XX'_{i+K}}$}
\psfrag{X'i+K+1}{$\scrs{\XX'_{i+K+1}}$}
\psfrag{Yi+1}{$\scrs{\YY_{i+1}}$}
\psfrag{Yi+2}{$\scrs{\YY_{i+2}}$}
\psfrag{Yi+K-1}{$\scrs{\YY_{i+K-1}}$}
\psfrag{Yi+K}{$\scrs{\YY_{i+K}}$}
\psfrag{Yi+K+1}{$\scrs{\YY_{i+K+1}}$}
\psfrag{dn}{$\scrs{\delta n}$}

\psfrag{xi}{$\scrs{x_i}$}
\psfrag{vell1}{$\scrs{v_{\ell_1}}$}
\psfrag{vell1+}{$\scrs{v_{\ell_1}^+}$}
\psfrag{xi+1}{$\scrs{x_{i+1}}$}
\psfrag{xi+2}{$\scrs{x_{i+2}}$}
\psfrag{xi+K-1}{$\scrs{x_{i+K-1}}$}
\psfrag{vell2}{$\scrs{v_{\ell_2}}$}
\psfrag{vell2+}{$\scrs{v_{\ell_2}^+}$}
\psfrag{xi+K}{$\scrs{x_{i+K}}$}
\psfrag{x'i+K}{$\scrs{x'_{i+K}}$}
\psfrag{x'i+K+1}{$\scrs{x'_{i+K+1}}$}
\psfrag{yi+1}{$\scrs{y_{i+1}}$}
\psfrag{yi+2}{$\scrs{y_{i+2}}$}
\psfrag{yi+K-1}{$\scrs{y_{i+K-1}}$}
\psfrag{yi+K}{$\scrs{y_{i+K}}$}
\psfrag{yi+K+1}{$\scrs{y_{i+K+1}}$}
\psfrag{<=sqdn}{$\scrs{\le \sqrt{\delta n}}$}

\begin{figure}[tbph]
\begin{center}
\includegraphics[scale=0.65]{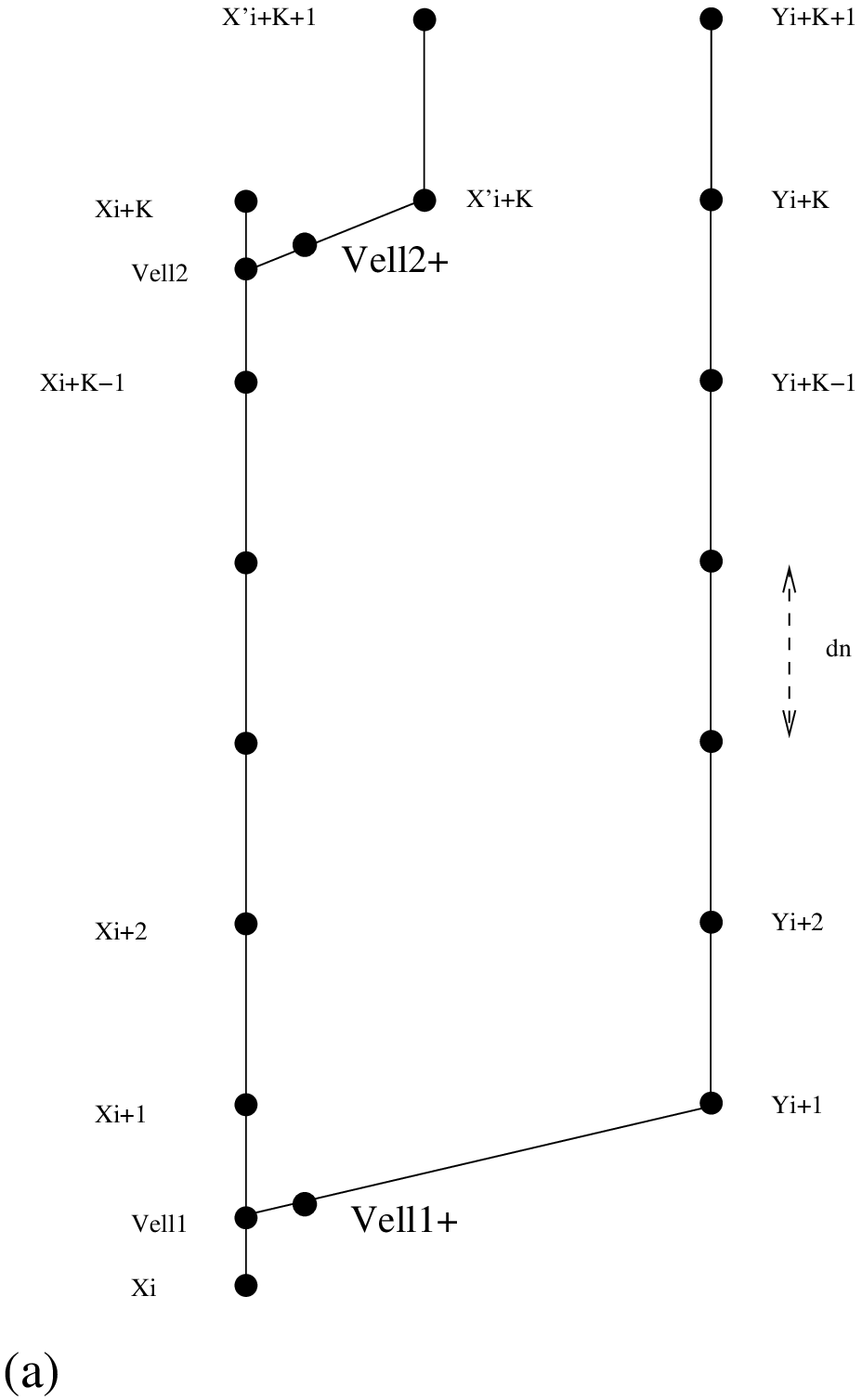}
\hskip1cm
\includegraphics[scale=0.65]{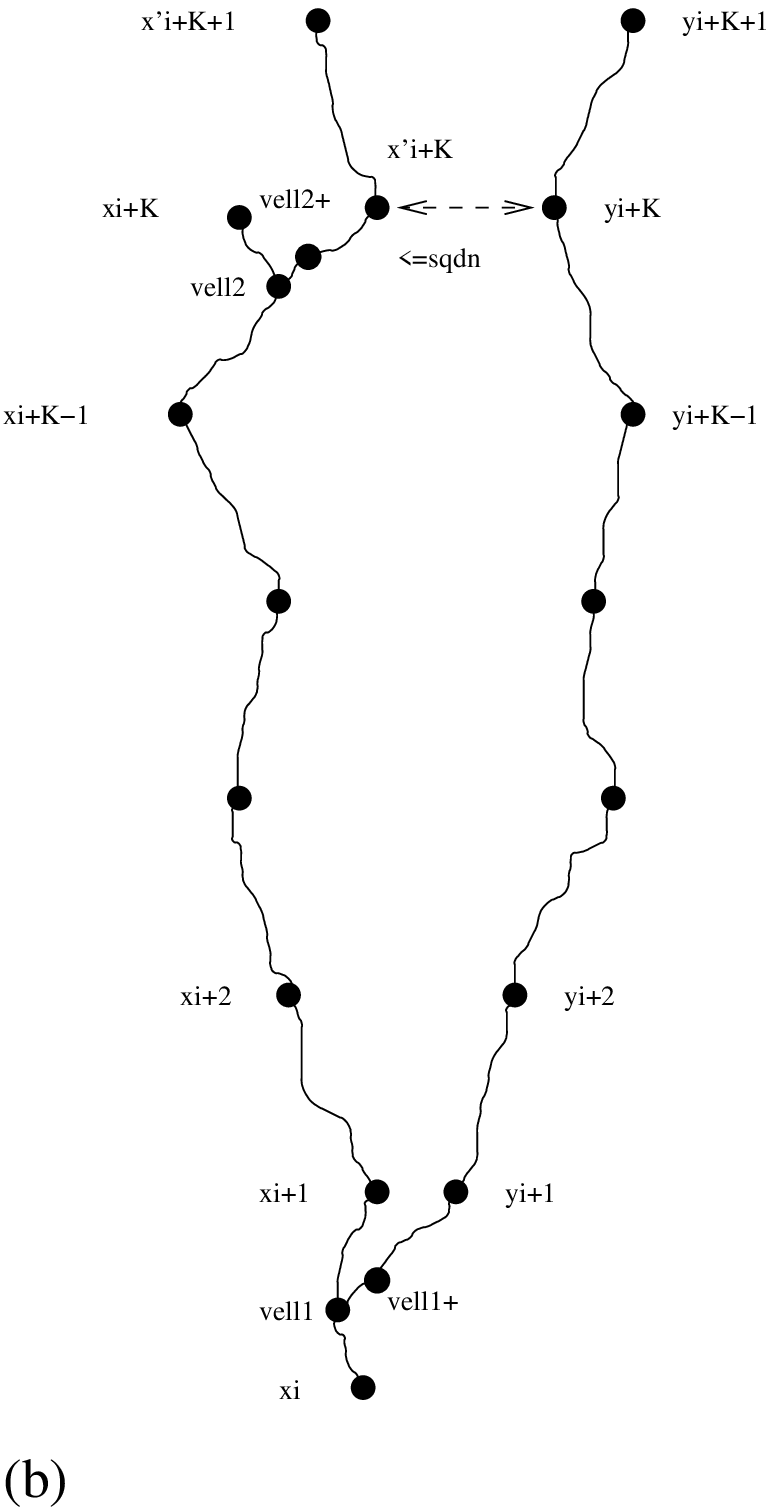}
\end{center}
\caption{(a) Illustration of $K$-tree-good.
(b) Illustration of $K$-spatially-good. All spatial distances
between consecutive vertices are at most
$\sqrt{\text{time difference}}$ and
the spatial distance between $x'_{i+K}$ and $y_{i+K}$
is at most $\sqrt{\delta n}$. 
[Figure reused by permission of Springer, from \emph{Commun.~Math.~Phys.} 
\textbf{331} (2014), 67--109 (Electrical resistance of the low-dimensional 
critical branching random walk. A.A.~J\'arai and A.~Nachmias) \CircledTop{c} (2014).]}
\label{fig:K-tree-good}
\end{figure}

Given a $K$-tree-good sequence $(i, \ldots, i+K)$
we denote by $V_{\ell_1}^+$ (respectively $V_{\ell_2}^+$)
the child of $V_{\ell_1}$ (respectively $V_{\ell_2}$)
leading to $\YY_{i+1}$ (respectively $\XX'_{i+K}$).
We further define the spatial locations $y_{i'}$ by
$\Phi(\YY_{i'}) = (y_{i'}, i' \delta n)$ for
$i+1 \leq i' \leq i+K\ch{+1}$, and
we similarly define $x'_{i+K}$, $x'_{i+K+1}$,
$v_{\ell_1}$, $v_{\ell_1}^+$, $v_{\ell_2}$, $v_{\ell_2}^+$.

We will write $U \prec W$ to denote that $W$ is a descendant of $U$, 
and write $h(U), h(W)$ for their respective heights in the tree 
(in particular, $h(W)>h(U)$).

\begin{definition}
\label{typicallyspaced}
Let $U \prec W$ be two tree vertices and let $u,w\in\Z^d$ be defined by
$\Phi(U)=(u,h(U))$ and $\Phi(W)=(w,h(W))$. We say that $U$ and $W$ are
{\em typically-spaced} if $\| w - u \| \leq \sqrt{h(W)-h(U)}$.
Denote this event by $\TS(U,W)$.
\end{definition}

\begin{definition}
\label{kspatialgood}
We say that a $K$-tree-good sequence $(i,\ldots, i+K)$ is
\emph{$K$-spatially-good} if the following holds.
\begin{enumerate}
\item[(5)] \begin{itemize}  \item $\TS(X_i, V_{\ell_1})$,
                            \item $\TS(V_{\ell_1+1}, X_{i+1})$,
                            \item For each $i+1 \leq j \leq i+K-2$ we have $\TS(X_j, X_{j+1})$,
                            \item $\TS(X_{i+K-1}, V_{\ell_2})$,
                            \item $\TS(V_{\ell_2+1}, X_{i+K})$,
            \end{itemize}
\item[(6)] \begin{itemize}  \item $\TS(V_{\ell_1}^+, \YY_{i+1})$,
                            \item For each $i+1 \leq j \leq i+K-1$ we have $\TS(\YY_j, \YY_{j+1})$,
                            \item $\TS(V_{\ell_2}^+, \XX'_{i+K})$,
                            \item $\| x'_{i+K} - y_{i+K} \| \le \sqrt{\delta n}$.
            \end{itemize}
\end{enumerate}
\end{definition}

\begin{definition} 
\label{def:kgood} 
When a sequence $(i, \ldots, i+K)$ is both $K$-tree-good
and $K$-spatially-good we say that it is $K$-{\em good}. Let $\cA(i)$ be the
event that $(i, \ldots, i+K)$ is $K$-good.
\end{definition}

Next, let $(i, \ldots, i+K)$ be a $K$-good sequence and let $U_1, U_2$  be two vertices at the same height such
that $U_1 \succ \XX'_{i+K}$ and $U_2 \succ \YY_{i+K}$. Given these, we write $Z_1$ for the highest common ancestor
of $U_1$ and $\XX'_{i+K+1}$ and $Z_2$ for the highest common ancestor of $U_2$ and $\YY_{i+K+1}$
(see Figure \ref{fig:intersect}). Further, we denote by $Z_1^+$ (respectively $Z_2^+$)
the child of $Z_1$ (respectively $Z_2$) leading to $U_1$ (respectively $U_2$).

\psfrag{Phi(X'i+K)}{$\scrs{\Phi(\XX'_{i+K})}$}
\psfrag{Phi(Yi+K)}{$\scrs{\Phi(\YY_{i+K})}$}
\psfrag{Phi(X'i+K+1)}{$\scrs{\Phi(\XX'_{i+K+1})}$}
\psfrag{Phi(Yi+K+1)}{$\scrs{\Phi(\YY_{i+K+1})}$}
\psfrag{Phi(U_1) = Phi(U_2)}{$\scrs{\Phi(U_1) = \Phi(U_2)}$}
\psfrag{Phi(Z_1)}{$\scrs{\Phi(Z_1)}$}
\psfrag{Phi(Z_2)}{$\scrs{\Phi(Z_2)}$}
\psfrag{Phi(Z+_1)}{$\scrs{\Phi(Z^+_1)}$}
\psfrag{Phi(Z+_2)}{$\scrs{\Phi(Z^+_2)}$}

\begin{figure}
\begin{center}
\includegraphics[scale=0.6]{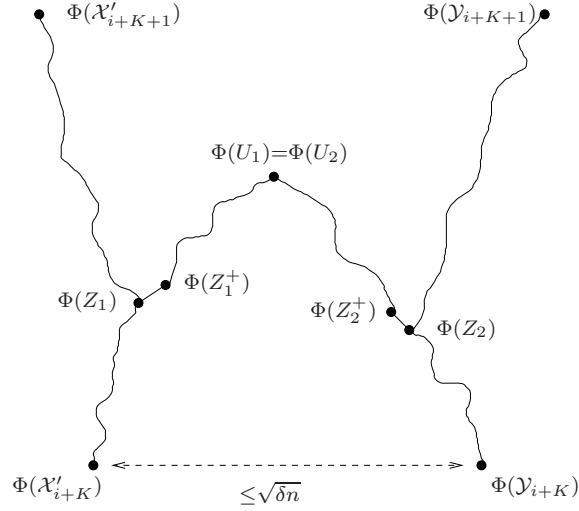}
\end{center}
\caption{The labelling of vertices in the two (potentially)
intersecting trees emanating from $\XX'_{i+K}$ and $\YY_{i+K}$.
[Figure reused by permission of Springer, from \emph{Commun.~Math.~Phys.} 
\textbf{331} (2014), 67--109 (Electrical resistance of the low-dimensional 
critical branching random walk. A.A.~J\'arai and A.~Nachmias) \CircledTop{c} (2014).]}
\label{fig:intersect}
\end{figure}

\begin{definition}
\label{intersect} 
We say that $U_1, U_2$ \emph{intersect-well} if the following conditions hold:
\begin{itemize}
\item[1.] $U_1 \succ \XX'_{i+K}$, $U_2 \succ \YY_{i+K}$,
\item[2.] $(i+K+(5/6)) \delta n \le h(U_1) = h(U_2) \le (i+K+1) \delta n$;
\item[3.] $(i+K+(1/2)) \delta n \le h(Z_1), h(Z_2) \le (i+K+(4/6)) \delta n$;
\item[4.] $\TS(\XX'_{i+K}, Z_1)$, $\TS(Z_1^+, U_1)$, $\TS(\YY_{i+K}, Z_2)$, $\TS(Z_2^+, U_2)$;
\item[5.] $\Phi(U_1) = \Phi(U_2)$.
\end{itemize}
And define the random set $\I$ by
\begin{equation}
\label{e:I-defn}
\begin{split}
  \cI  = \left\{ (U_1,U_2) : \text{$U_1$ and $U_2$ intersect-well} \right\}\, ,
\end{split}
\end{equation}
Now we define the event $\cB'(i,c'_0)$ where $c'_0>0$ is a constant
to be chosen later:
\eqnst
{ \cB'(i,c'_0) 
  = \cA(i) \cap \left\{ |\cI| \geq \frac{c'_0 \sigma^4}{D^d} \log (\delta n) \right\} \, .}
\end{definition}

\ch{The following theorem replaces \cite[Theorem 2.1]{JN14}, 
and will be proved in Section \ref{sec:intersect}.}

\begin{theorem}[Intersections exist]
\label{thm:intersect}
Assume $d=6$. There exist constants $c'_0, c'_1 > 0$ and for any $K \ge 2$ there
exists $c_2 = c_2(K) > 0$, and $n_3' = n_3'(\sigma^2, C_3, \bp^1, K)$ such that 
for any $0 < \delta < (K+4)^{-1}$, whenever $\delta n \ge n_3'$ and $x$ satisfies
$\|x\| \le \sqrt{2 n/\delta}$, we have
\eqnst
{ \bP ( \cA(i) \,|\, \Phi(V_n) = (x,n) ) \ge c_2, }
and 
\eqnst
{ \bP ( \cB'(i,c'_0) \,|\, \cA(i), \, \Phi(V_n) = (x,n) )
  \ge \frac{c'_1}{\log (\delta n)}, }
for $i = 0, K, 2K, \dots, (N-1)K$.
\end{theorem}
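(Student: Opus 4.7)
The plan is to treat the two inequalities of the theorem in turn. The first, $\bP(\cA(i)\mid \Phi(V_n)=(x,n))\ge c_2$, should reduce to standard branching-tree and random-walk estimates along the lines of \cite{JN14}. The conditional intersection estimate is the substantial new contribution, and will rest on a second-moment analysis tailored to the critical dimension $d=6$.

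For the first inequality, I decompose $\cA(i)$ into the $K$-tree-good conditions of Definition \ref{ktreegood} and the $K$-spatially-good conditions of Definition \ref{kspatialgood}. The tree-good events --- existence and uniqueness of $\ell_1,\ell_2$ in their sub-intervals, together with the various UDPs at $V_{\ell_1},V_{\ell_2},\YY_{i+1},\dots,\YY_{i+K}, \XX'_{i+K}$ --- are supported on independent critical trees of height $\asymp \delta n$, and each has probability bounded below uniformly in $n$: a critical tree survives to height $\delta n$ with probability $\asymp 1/(\sigma^2\delta n)$, and conditionally on survival, UDP has a positive limit as $\delta n\to\infty$ by a Yaglom-type argument for the size-biased first-generation splitting. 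The spatial conditions hold with positive probability under $\Phi(V_n)=(x,n)$ with $\|x\|\le\sqrt{2n/\delta}$, using the variance bounds of Proposition \ref{prop:variance} applied to the backbone and off-backbone random-walk displacements, together with Chebyshev's inequality. Combining these (nearly) independent contributions for $\delta$ small gives $\bP(\cA(i)\mid \Phi(V_n)=(x,n))\ge c_2$.

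For the intersection estimate I use a second-moment analysis of $|\cI|$. The key two-sided asymptotics to establish, under the conditioning $\cA(i)\cap\{\Phi(V_n)=(x,n)\}$, are
\eqnst
{ \E[|\cI|] \asymp \frac{\sigma^4}{D^d}, \qquad
  \E[|\cI|^2] \asymp \frac{\sigma^8\log(\delta n)}{D^{2d}}. }
For the first moment I use the spine decomposition of the conditioned trees rooted at $\XX'_{i+K}$ and $\YY_{i+K}$: each off-spine subtree, attached at a spine vertex at height $j$ (with an expected $\sigma^2$ attachments per $j$), contributes an expected $\bp^{h-j-1}(\Phi(Z_1^+),z)$ particles to an intersect-well pair at level $h$ and position $z$. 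Summing over the attachment heights $j_1,j_2\in[(1/2)\delta n,(2/3)\delta n]$ and the intersection height $h\in[(5/6)\delta n,\delta n]$, the three-fold sum $\asymp(\delta n)^3$ balances the local central-limit decay $(\delta n)^{-d/2}=(\delta n)^{-3}$ of the convolution $\bp^{2h-j_1-j_2-2}(\Phi(Z_1^+),\Phi(Z_2^+))$. The second moment decomposes according to how the ancestries of two intersect-well pairs correlate inside each tree; the dominant contribution is from pairs of particles lying in the same off-spine subtree and sharing a common ancestor inside it, and in $d=6$ the resulting sum over the within-subtree height of the common ancestor produces the extra logarithmic factor.

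The main obstacle is upgrading these moment estimates to the required tail bound. A direct Paley--Zygmund inequality only yields $\bP(|\cI|\ge c\sigma^4/D^d\mid\cA(i))\ge c'/\log(\delta n)$, which has the correct probability but is off by a factor of $\log(\delta n)$ in the threshold. The ratio $\E|\cI|^2/(\E|\cI|)^2\asymp \log(\delta n)$ indicates that $|\cI|$ is approximately bimodal --- typically $0$, but of size $\asymp \log(\delta n)$ with probability $\asymp 1/\log(\delta n)$. To make this rigorous I would argue that the very correlation mechanism that produced the logarithmic second moment implies clumping: conditionally on the existence of one intersect-well pair in the off-spine subtree pair carrying it, the two subtrees must be spatially close enough that they generate $\asymp \log(\delta n)$ further intersect-well pairs with positive probability. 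Equivalently, one can apply a second-moment argument to a suitable truncation $|\cI|\wedge M$ with $M\asymp \log(\delta n)$, provided one has sharp two-sided control on the correlation sums appearing in $\E|\cI|^2$. Getting these bounds sharp --- matching upper bounds (consistent with Markov's inequality) with lower bounds producing the claimed $\log(\delta n)$ term --- is where most of the technical work of the proof will be concentrated.
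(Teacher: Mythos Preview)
Your outline for the first inequality is correct and matches \cite[Theorem 2.1]{JN14}, which the paper cites without further proof.

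For the second inequality, your diagnosis is accurate: the moment bounds $\E|\cI|\asymp 1$, $\E|\cI|^2\asymp\log(\delta n)$ hold, direct Paley--Zygmund falls short by a $\log$ factor in the threshold, and the remedy is your option (a) --- clumping conditional on a single intersection. This is precisely the paper's route. What you are missing is the concrete mechanism that makes the conditional clump of size $\log(\delta n)$, and the way the argument is organized.

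The paper does not condition on ``the off-spine subtree pair'' being spatially close. Instead, it fixes a specific pair $(U_1,U_2)\in\cI'$ (a slight strengthening of $\cI$ with tighter displacement bounds) and looks for \emph{extra} intersections $\hI_{V,U}$ among descendants of vertices $W_1^{(r_1)},W_2^{(r_2)}$ lying \emph{on the backbones from $Z_1^+$ to $U_1$ and from $Z_2^+$ to $U_2$}. Because these backbones are random-walk bridges ending at the common point $u=\Phi(U_1)=\Phi(U_2)$, the $W$-vertices near height $h_u$ are forced to be close to $u$, and the conditional probability that subtrees off $W_1^{(r_1)}$ and $W_2^{(r_2)}$ intersect at height $h_y\ge h_u$ is $\asymp(2h_y-r_1-r_2)^{-3}$. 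Summing over $r_1,r_2,h_y$ gives a logarithm for $\E[\,|\hI_{V,U}|\mid\cIW'\,]$; a matching second-moment bound $\E[\,|\hI_{V,U}|^2\mid\cIW'\,]\le C\log^2(\delta n)$ then yields, via Paley--Zygmund, that $|\hI_{V,U}|\ge c'_0\log(\delta n)$ with uniformly positive conditional probability.

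The argument is then closed by a \emph{second} application of Paley--Zygmund, to the variable
\[
J=\sum_{V_1,V_2,U_1,U_2}\mathbf{1}\bigl\{|\hI_{V,U}|\ge c'_0\tfrac{\sigma^4}{D^d}\log(\delta n)\bigr\},
\]
which satisfies $\E J\ge c\,\E|\cI'|\ge c$ and $\E J^2\le\E|\cI|^2\le C\log(\delta n)$, so that $\bP(J>0)\ge c/\log(\delta n)$; and $\{J>0\}\subset\cB'(i,c'_0)$. Your truncation suggestion (b) is plausible in spirit but is not what is done, and would still require the same conditional first-moment lower bound to justify the truncation level.
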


As in \cite{JN14}, we define 
\eqnst
{ \gamma(n)
  = \sup_{x : \|x\| \le \sqrt{n}} \gamma(n,x). }
The proof of the following theorem will be completed in Section \ref{sec:good-blocks}.

\begin{theorem}[Analysis of good blocks]
\label{thm:good-blocks}
There exists $K'_0 < \infty$ and $n'_4 = n'_4(\sigma^2, C_3, \bp^1)$ such that 
if $K \ge K'_0$ and $\delta n \ge n'_4$, we have
\eqnst
{ \E \Big[ \Reff ( \Phi(X_i) \conn \Phi(X_{i+K}) ) \Big| \cA(i),\, \cB'(i,c'_0),\, 
     \Phi(V_n) = (x,n) \Big]
  \le \frac{3 K}{4} \, \max_{1 \le k \le \delta n} \gamma(k), }
for $i = 0, K, 2K, \dots, (N-1)K$.
\end{theorem}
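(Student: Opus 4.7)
The plan is to combine the triangle inequality \eqref{e:triangle} applied at the two ends of the $K$-block with the parallel law \eqref{e:parallel} applied in the middle, using a second route made available by the intersect-well pairs guaranteed under $\cB'(i, c'_0)$. The starting point is
\eqnst
{ \Reff(\Phi(X_i) \conn \Phi(X_{i+K}))
  \le \Reff(\Phi(X_i) \conn \Phi(V_{\ell_1}))
      + \Reff(\Phi(V_{\ell_1}) \conn \Phi(V_{\ell_2}))
      + \Reff(\Phi(V_{\ell_2}) \conn \Phi(X_{i+K})). }
The two end terms concern pieces of the tree of temporal length at most $\delta n$ with endpoints spatially within $\sqrt{\delta n}$ of each other by the $\TS$ clauses in $\cA(i)$, so each should contribute at most $\max_{1 \le k \le \delta n} \gamma(k)$ in conditional expectation.

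For the middle term I would exhibit two edge-disjoint subgraphs $G_A, G_B$ of the trace, each joining $\Phi(V_{\ell_1})$ to $\Phi(V_{\ell_2})$. Here $G_A$ is the $\Phi$-image of the backbone stretch $V_{\ell_1}, X_{i+1}, \ldots, X_{i+K-1}, V_{\ell_2}$ together with all critical sub-trees attached at those backbone vertices, while $G_B$ is the $\Phi$-image of: the edge $V_{\ell_1} \to V_{\ell_1}^+$; the lineage $V_{\ell_1}^+ \to \YY_{i+1} \to \cdots \to \YY_{i+K}$ with its off-lineage sub-trees; a round-trip $\Phi(\YY_{i+K}) \to \Phi(U_2) = \Phi(U_1) \to \Phi(\XX'_{i+K})$ through an arbitrarily chosen pair $(U_1, U_2) \in \cI$; the lineage $\XX'_{i+K} \to \cdots \to V_{\ell_2}^+$; and the edge $V_{\ell_2}^+ \to V_{\ell_2}$. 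The UDP clauses in Definition \ref{ktreegood} ensure that $V_{\ell_1}^+$ and $V_{\ell_2}^+$ are children of $V_{\ell_1}$ and $V_{\ell_2}$ distinct from the respective backbone continuations, so that the $\YY$- and $\XX'$-lineages are contained in sub-trees $\cT_{n,m}(\ell_1)$ and $\cT_{n,m}(\ell_2)$ disjoint from the backbone; hence $G_A$ and $G_B$ are edge-disjoint. The parallel law \eqref{e:parallel} then gives
\eqnst
{ \Reff(\Phi(V_{\ell_1}) \conn \Phi(V_{\ell_2}))
  \le \tfrac{1}{4}(R_A + R_B), }
where $R_A, R_B$ denote the resistances within $G_A, G_B$. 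A sub-block by sub-block triangle inequality decomposes $R_A$ into $K$ pieces of temporal length $\le \delta n$ and $R_B$ into $K + O(1)$ pieces (the $O(1)$ coming from the round-trip through the intersection and from partial blocks at the endpoints). Using the $\TS$ clauses in $\cA(i)$ and the fourth item of Definition \ref{intersect} to certify typically-spaced sub-block endpoints, each piece is bounded in conditional expectation by $\max_{1 \le k \le \delta n} \gamma(k)$. Hence the middle term is at most $(K/2 + O(1)) \max_k \gamma(k)$, and combining with the two end terms yields the stated bound $(3K/4) \max_k \gamma(k)$ provided $K \ge K'_0$ for an explicit constant $K'_0$ absorbing the additive $O(1)$.

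The main technical obstacle is justifying the per-sub-block bound $\E[\cdot] \le \max_k \gamma(k)$ in the presence of the elaborate conditioning $\cA(i) \cap \cB'(i, c'_0) \cap \{\Phi(V_n) = (x, n)\}$, which entangles UDPs on several levels, a long $\YY$-lineage reaching $(i+K+1) \delta n$, the existence of $\ge c'_0 \sigma^4 D^{-d} \log(\delta n)$ intersection-well pairs at block $(i+K)$, and the spatial conditioning on $\Phi(V_n)$. The decoupling plan is to condition further on the backbone skeleton and on the spatial locations of the ``special'' vertices $X_{i+1}, \ldots, X_{i+K-1}, \YY_{i+1}, \ldots, \YY_{i+K+1}, \XX'_{i+K}, \XX'_{i+K+1}$ appearing along the two routes, and then argue that conditionally each remaining sub-block is distributed as an unconditioned $\cT_{k, m'}$ at a typically-spaced endpoint, whose expected resistance is bounded by $\gamma(k)$ directly from the definition. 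A secondary but essential point is the careful verification that $G_A$ and $G_B$ are edge-disjoint. Apart from these bookkeeping issues the argument parallels that of the corresponding step in \cite{JN14}; the key structural simplification is that a single detour route through any one intersection-well pair already suffices to save the factor $(3/4)$, so the weaker guarantee of only $\log(\delta n)$ intersect-well pairs (each occurring with probability of order $1/\log(\delta n)$) is still enough.
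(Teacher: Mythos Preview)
Your overall architecture---triangle inequality at the ends, parallel law in the middle, sub-block decomposition of each route---is exactly the paper's approach; see Lemma \ref{easyestimates} for the bounds on the pieces that do not touch the intersection, and Lemma \ref{lem:short-cut} for the shortcut $\Phi(\XX'_{i+K}) \conn \Phi(\YY_{i+K})$. The gap is in your treatment of that shortcut.

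You write ``an arbitrarily chosen pair $(U_1,U_2)\in\cI$'' and then invoke the $\TS$ clauses in Definition \ref{intersect} to bound the four pieces $\XX'_{i+K}\to Z_1$, $Z_1^+\to U_1$, $\YY_{i+K}\to Z_2$, $Z_2^+\to U_2$ by $\max_k\gamma(k)$ each. But the pair $(U_1,U_2)$ is selected as a function of the random outcome, so Lemma \ref{lem:distrib} (which gives the $\cT_{k,m'}$ distribution for $\cT^A$, $\cT^B$ \emph{conditionally on a fixed $U\in T_M$ belonging to $\cT$}) does not apply to it. Your proposed decoupling---conditioning further on the spatial locations of $\XX'_{i+K},\XX'_{i+K+1},\YY_{i+K},\YY_{i+K+1}$---does not help here, because after that conditioning the event $\cB'(i,c'_0)$ is a functional of precisely the two sub-trees through which the round-trip runs. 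The conditional law of those sub-trees given $\cB'$ is therefore \emph{not} that of an unconditioned $\cT_{k,m'}$, and there is no reason the expected resistance of the four pieces should be $O(\max_k\gamma(k))$.

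The paper resolves this via Lemma \ref{lem:short-cut}, whose proof (following \cite[Lemma 6.2]{JN14}) draws $(U_1,U_2)$ \emph{uniformly at random} from $\cI$. This produces the identity
\[
\E[\cR\,\bone_{\cB'}]=\sum_{V_1,V_2,U_1,U_2}\E[\cR\,\bone_{\cB'}\,\bone_V\,\bone_U\,\bone_T\,\bone_\Phi],
\]
and on $\cB'$ one has $\bone_U=|\cI|^{-1}\le D^d/(c'_0\sigma^4\log(\delta n))$. Now $V_1,V_2,U_1,U_2$ are fixed in each summand, Lemma \ref{lem:distrib} applies legitimately, and summing over $U_1,U_2$ is a first-moment computation yielding $\E[\cR\,\bone_{\cB'}]\le C\max_k\gamma(k)/\log(\delta n)$. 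Dividing by $\bP(\cB'\,|\,\cA(i))\ge c'_1/\log(\delta n)$ (Theorem \ref{thm:intersect}) recovers the $\log(\delta n)$ factor. So your closing remark is backwards: the requirement $|\cI|\ge c'_0\sigma^4 D^{-d}\log(\delta n)$ is not slack---it is exactly what is needed to cancel the $1/\log(\delta n)$ probability of $\cB'$ in this averaging argument. A single intersection pair does give a route, but gives no handle on the conditional expectation.
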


\section{Existence of Intersections}
\label{sec:intersect}

In this section we prove Theorem \ref{thm:intersect}. 
The statement about the probability of $\cA(i)$ is unchanged compared to
\cite[Theorem 2.1]{JN14}, and hence requires no proof. On the other hand, there are 
substantial changes to the proof of the estimate on the probability 
of $\cB'(i,c'_0)$, that we now detail. Some of the required estimates
appeared in the MSc thesis \cite{DML18}, in a slightly 
different form (with less technical restrictions on the intersections). 
Here we adapt and complete the analysis of  
\cite{DML18} in a form that suits the requirements of the present paper.

\subsection{Sufficient intersections}

We now proceed to prove the second statement of Theorem \ref{thm:intersect}.
We will need to assume that the progeny distribution is bounded by $M$,
and approximate the original progeny distribution with bounded 
distributions $p_M(k)$, in such a way that 
\eqnsplst
{ 1 
  &= \sum_{0 \le k \le M} p_M(k) 
  = \sum_{0 \le k \le M} k p_M(k) \\
  \sigma^2
  &= \lim_{M \to \infty} \sum_{0 \le k \le M} k (k-1) p_M(k) \\
  C_3 
  &\ge \sum_{0 \le k \le M} k^3 p_M(k). }
Given any $n$ and $m$ such that $m\geq 2n$ we regard the random tree $\T_{n,m}$ as a 
subtree of an infinite $M$-ary tree $T_M$ with root $\rho$ as follows: the root of 
$\T_{n,m}$ is mapped to $\rho$ and if $W$ is a vertex of $\T_{n,m}$ with $k$ children 
we map the $k$ edges randomly amongst the $\binom{M}{k}$ possible choices in $T_M$. 
Denote by $\V_n \in T_M$ the random vertex where the last backbone vertex of $\T_{n,m}$ 
was mapped to. The triple $(\T_{n,m}, \rho, \V_n)$ is a doubly rooted tree.

Let $(\cT_1,\rho_1,\cV_1)$ and $(\cT_2,\rho_2,\cV_2)$ be two independent copies of
$(\cT_{\delta n, 2 \delta n}, \rho, \cV_{\delta n})$, randomly imbedded into 
$T_M$. Let $\Phi_1$ and $\Phi_2$ be two independent random walk mappings of 
$T_M$ such that $\Phi_1(\rho_1) = \Phi(\cX'_{i+K})$ and $\Phi_2(\rho_2) = \Phi(\cY_{i+K})$.
However, for notational convenience, and without loss of generality, we assume that 
$\Phi_1(\rho_1) = (o,0)$ and $\Phi_2(\rho_2) = (x,0)$, with $\| x \| \le \sqrt{\delta n}$.
Recall that the random variable $|\cI|$ introduced in \eqref{e:I-defn} has the same distribution as
the random variable (also denote $|\cI|$ here):
\eqnst
{ |\cI|
  = \sum_{U_1 \in \cT_1,\, U_2 \in \cT_2} \bone_{\text{$(U_1,U_2)$ intersect well}}. }
Our goal in this section is to show that when $d=6$, we have
\eqn{e:I-estimate}
{ \bP \left( |\cI| \ge c'_0 \sigma^4 D^{-d} \log (\delta n) \right)
  \ge \frac{c'_1}{\log (\delta n)} }
for suitable $c'_0, c'_1 > 0$.
Note that once we prove this estimate for offspring distribution $p_M(k)$
(uniformly in $M$), we can let $M \to \infty$, and obtain the second statement of
Theorem \ref{thm:intersect} in full generality.

For technical reasons, we will also need a slight modification of $\cI$.
The difference is that we make a stronger restriction on the spatial
displacement between $\Phi(Z_1^+)$ and $\Phi(U_1)$ and between $\Phi(Z_2^+)$ and
$\Phi(U_2)$, as well as a stronger restriction on the height of the intersection:
\eqnst
{ \cI'
  = \left\{ (U_1, U_2) \in \cI : \quad \parbox{6cm}{$h(U_1) = h(U_2) \le (11/12) \delta n$, \\ 
      $\| z_1^+ - u_1 \| \le \frac{1}{2} \sqrt{h(U_1) - h(Z_1^+)}$, \\ 
      $\| z_2^+ - u_2 \| \le \frac{1}{2} \sqrt{h(U_2) - h(Z_2^+)}$} \right\}. }

The starting point for proving \eqref{e:I-estimate} is to look at the moments of $|\cI|$.
The following theorem is an analogue of \cite[Theorem 3.8]{JN14}.

\begin{theorem}
\label{thm:I-moments}
Assume that $d=6$ and $\|x\| \le \sqrt{\delta n}$. There exist constants $C' < \infty$
and $c' > 0$ and $n'_9 = n'_9(\sigma^2, C_3, \bp^1) < \infty$ such that for 
$\delta n \ge n_9$ we have
\eqn{e:I-1st-moment}
{ \E |\cI| 
  \ge \E |\cI'| 
  \ge \frac{c' \sigma^4}{D^6}, }
and
\eqn{e:I-2nd-moment}
{ \E |\cI|^2
  \le \frac{C' \sigma^8}{D^{12}} \log (\delta n). }
\end{theorem}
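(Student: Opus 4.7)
My plan is to treat the first and second moments separately, both via a spine decomposition of the trees $\cT_1, \cT_2$ combined with the local CLT (Lemma \ref{lem:LCLT}). In each case the computation reduces to summing a product of (i) a tree-combinatorial factor giving the expected number of admissible vertex configurations, and (ii) a random-walk matching probability produced by the LCLT.

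For the first moment lower bound I would exploit the spine representation of $\cT_k = \cT_{\delta n, 2\delta n}$. A backbone vertex $V_{j_k}^{(k)}$ at height $j_k \in [\tfrac12\delta n,\tfrac23\delta n]$ has size-biased offspring with mean $1+\sigma^2$, hence $\sigma^2$ expected non-backbone children; each of these roots a critical Galton--Watson subtree conditioned to die by time $2\delta n - j_k$, and for the relevant range of $j_k$ this conditioning is mild enough that the expected number of descendants at height $k\in[\tfrac56\delta n,\delta n]$ is bounded between two positive constants. Multiplying across the two trees yields an expected number $\asymp \sigma^4$ of admissible quadruples $(Z_1^+,U_1,Z_2^+,U_2)$ for each $(j_1,j_2,k)$. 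Conditioning on the tree structure, the typically-spaced events in the definition of $\cI'$ hold with probability bounded away from $0$ by Chebyshev applied to Proposition \ref{prop:variance}. Conditioning further on $\Phi(Z_1^+),\Phi(Z_2^+)$ at spatial separation $O(\sqrt{\delta n})$, the matching probability $\bP(\Phi(U_1)=\Phi(U_2))$ is just a local CLT probability for a single random walk of length $\asymp \delta n$ to hit a prescribed target at distance $O(\sqrt{\delta n})$, so Lemma \ref{lem:LCLT} gives it as $\ge c\,D^{-6}(\delta n)^{-3}$. Summing over $(j_1,j_2,k)$ produces $\sigma^4\cdot D^{-6}\cdot(\delta n)^3\cdot(\delta n)^{-3}=c\sigma^4/D^6$, as claimed.

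For the second moment I would expand $|\cI|^2$ as a sum over quadruples $(U_1,U_2,U_1',U_2')$ each pair of which intersects well, and classify them by the tree structure in $\cT_1,\cT_2$. Let $W_k$ be the highest common ancestor of $U_k,U_k'$ in $\cT_k$, at height $m_k$; the many-to-one lemma together with the finite third moment $C_3$ (which bounds the second moment of the size-biased distribution) gives that the expected number of admissible quadruples with prescribed heights $(m_1,m_2,k,k')$ is $O(\sigma^4)$ times a modest combinatorial factor. Conditionally on the tree and on $\Phi(W_1),\Phi(W_2)$, the two matching conditions $\Phi(U_k)=\Phi(U_k')$ decouple into independent random-walk matchings, so Lemma \ref{lem:LCLT} bounds each by $C D^{-6}(k-m_k)^{-3}$ and $C D^{-6}(k'-m_k)^{-3}$ respectively. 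Putting this together and summing, the dominant terms are those in which the HCAs sit near the terminal heights; in $d=6$ the remaining discrete sum reduces, after cancellation with the volume factor, to $\sum_{s=1}^{\delta n} s^{-1}=O(\log(\delta n))$, yielding the $\log(\delta n)$ factor (and the $D^{-12}$ appears because two independent LCLT estimates are used).

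The main obstacle will be the case analysis for the second moment. The tree structure in each $\cT_k$ splits into subcases depending on whether $W_k$ lies on the backbone or inside a subtree hanging off the backbone, and on the relative order of the heights $m_k$ and the backbone heights $j_k,j_k'$ of $Z_k,Z_k'$; each subcase produces a slightly different combinatorial weight and a slightly different product of LCLT factors, and one must verify that only one regime (HCA heights comparable to the terminal heights) contributes the $\log(\delta n)$, while all others give at most $O(1)$. A secondary technical issue is the uniform control of the conditioning defining $\cT_{\delta n,2\delta n}$, which affects the sizes of the subtrees hanging off the backbone; this is handled by standard survival-probability estimates for critical Galton--Watson trees under the finite variance assumption, ensuring the constants remain uniform in $\delta n$ and in $M$, so that the final estimate passes to the limit as the truncation $M\to\infty$.
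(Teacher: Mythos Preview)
Your first–moment sketch is sound and matches the approach taken in \cite{JN14}: spine decomposition, a $\sigma^2$ tree factor per branch point, and an LCLT lower bound on the matching probability, summed over $\asymp(\delta n)^3$ choices of $(j_1,j_2,h)$.

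The second–moment sketch, however, has a genuine gap. After conditioning on the tree and on the HCA positions $\Phi(W_1)=w_1$, $\Phi(W_2)=w_2$, the two matchings $\Phi_1(U_1)=\Phi_2(U_2)$ and $\Phi_1(U_1')=\Phi_2(U_2')$ do decouple, but each matching probability is $\bp^{2h-m_1-m_2}(w_1,w_2)$, not $\bp^{h-m_k}(\cdot,\cdot)$ as you write. More seriously, bounding both factors uniformly by the LCLT as $C D^{-6}(2h-m_1-m_2)^{-3}$ and then summing $\bp^{m_1}(o,w_1)\bp^{m_2}(x,w_2)$ to~$1$ throws away the spatial correlation between $w_1$ and $w_2$. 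If you carry this through you obtain a bound of order $\sigma^8 D^{-12}\,\delta n$ (or worse), not $\sigma^8 D^{-12}\log(\delta n)$: the sums over the HCA heights $m_1,m_2$ and the branch heights $j_1,j_2$ each contribute a factor $\asymp\delta n$ which the crude LCLT cannot cancel.

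What is missing is precisely the diagrammatic estimate of Lemma~\ref{lem:f-bnd}. After summing over the intersection heights $h,\tilde h$ one obtains $D^{-2d}f(m_1,m_2,w_1,w_2)^2$, where $f$ interpolates between a temporal decay $|m_1-m_2|^{(2-d)/2}$ and a Green-function decay $\|w_1-w_2\|^{2-d}$. One then bounds one of the incoming legs, say $\bp^{m_2}(x,w_2)$, by the LCLT to extract an essential $(\delta n)^{-d/2}$ factor, and restricts the $w_2$–sum to a ball of radius $|m_1-m_2|^{1/2}$ (in case~I) or uses the spatial decay of $f$ directly (in case~II). Only after this does the residual sum over $(m_1,m_2,j_1,j_2)$ collapse to $\log(\delta n)$ in $d=6$. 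This is exactly how \cite[Theorem~3.8]{JN14} proceeds, and the same mechanism is displayed in detail in the present paper for the closely related quantity $S_a$ in the proof of Lemma~\ref{lem:S_a}. Your case analysis is the right skeleton, but the flesh has to be the $f$–function, not the uniform LCLT.
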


Since the proof of this theorem requires only minor adaptations compared 
to the proof of \cite[Theorem 3.8]{JN14}, we omit it.

Applying the Paley-Zygmund inequality to $|\cI|$ shows that 
$\bP ( |\cI| > 0 ) \ge c' / \log (\delta n)$, but comes short of proving
\eqref{e:I-estimate}. In order to prove this, we 
will consider the number of intersections conditional on
$(U_1,U_2) \in \cI'$ for fixed $U_1, U_1 \in T_M$, and show that 
this is at least of order $\log (\delta n)$ with conditional 
probability bounded away from $0$.

%

For the statement of the next lemma we fix
\eqnst
{ 0 \le k_1 \leq \delta n-1 \qquad\qquad
  k_1 + 1 \leq h_u \leq \delta n.
}
Given $V \in T_M$ at level $\delta n$ and $U \in T_M$ at level $h_u$ let 
$Z\in T_M$ be the highest common ancestor of $V$ and $U$ and let $Z^+$ be 
the unique child of $Z$ leading towards $U$. 
\ch{We assume that $Z$ is at height $k_1$.}
Given a tree $t \subset T_M$ 
such that $V, U\in t$ and $V$ does not have any children in $t$, we have 
a unique decomposition of $t$ into edge disjoint trees 
$(t^A, \rho, Z), (t^B, Z^+, U), t^C$ and $t^D$, see Figure \ref{fig:to-U1}. 
The doubly rooted tree $(t^A, \rho, Z)$ contains all the descendants of 
$\rho$ that are not descendants of $Z$. The doubly rooted tree 
$(t^B, Z^+, U)$ contains all the descendants of $Z^+$ that are not 
descendants of $U$. The tree $t^C$ contains all the descendants of $U$ 
and finally the tree $t^D$ contains all other edges, namely, all the 
descendants of $Z$ that are not descendants of $Z^+$ (in particular, 
the edge $Z,Z^+$ is in $t^D$).

For $W \in T_M$ let $\Theta_W$ denote the tree isomorphism that takes 
$W$ to $\rho$ and the descendants subtree of $W$ onto $T_M$.

\psfrag{deltan}{$\delta n$}
\psfrag{2deltan}{$2 \delta n$}
\psfrag{ell1}{$k_1$}
\psfrag{h_u}{$h_u$}
\psfrag{zero}{$0$}
\psfrag{Y_1}{$V$}
\psfrag{U_1}{$\!U$}
\psfrag{Z_1}{$\!\!\!Z$}
\psfrag{Z^+_1}{$Z^+$}
\psfrag{rho}{$\rho$}
\psfrag{t1}{$t^A$}
\psfrag{t2}{$t^B$}
\psfrag{t3}{$t^C$}
\psfrag{t4}{$t^D$}

\begin{figure}
\begin{center}
\includegraphics[scale=0.51]{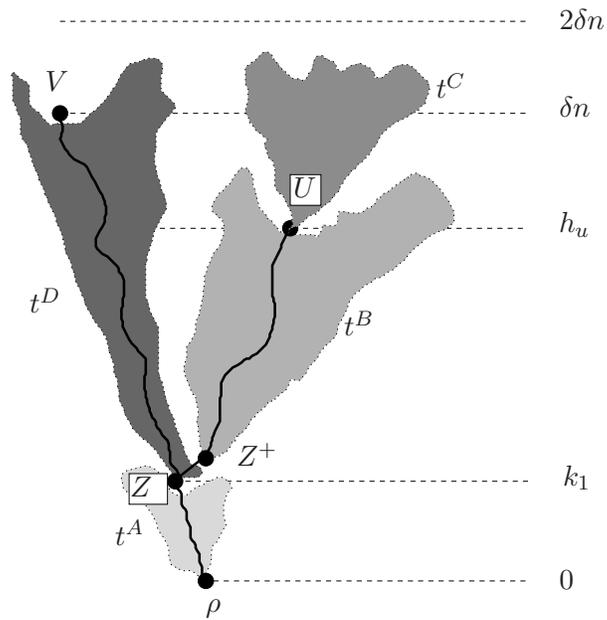}
\end{center}
\caption{Illustration of the decomposition into
edge-disjoint trees $t^A, t^B, t^C, t^D$ appearing
in Lemma \ref{lem:distrib} ($2 \delta n$ and $\delta n$
are not to scale).
[Figure reused by permission of Springer, from \emph{Commun.~Math.~Phys.} 
\textbf{331} (2014), 67--109 (Electrical resistance of the low-dimensional 
critical branching random walk. A.A.~J\'arai and A.~Nachmias) \CircledTop{c} (2014).]}
\label{fig:to-U1}
\end{figure}

The following lemma is taken without change from \cite{JN14}.

\begin{lemma}[{\cite[Lemma 6.4]{JN14}}]
\label{lem:distrib}
Let $V, U \in T_M$ be at heights $\delta n$ and $h_u$, respectively, and let 
$(\cT, \rho, \cV)$ be distributed as $(\cT_{\delta n, 2 \delta n}, \rho, \cV_{\delta n})$.
Conditionally on the event $\{ \cV = V,\, U \in \cT \}$ we have that
\eqnst
{ ( \cT^A,\, \rho,\, Z ) 
  \stackrel{\mathrm{d}}{=} ( \cT_{k_1, 2 \delta n},\, \rho,\, \cV_{k_1} ) \,|\, \cV_{k_1} = Z, }
and 
\eqnst
{ \Theta_{Z^+}( (\cT^B,\, Z^+,\, U) )
  \stackrel{\mathrm{d}}{=} ( \cT_{h_u - k_1 - 1, 2 \delta n - k_1 - 1},\, \rho,\, \cV_{h_u - k_1 - 1} )
     \,|\, \cV_{h_u - k_1 - 1} = \Theta_{Z^+}(U). }
\end{lemma}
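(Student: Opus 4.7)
The plan is to verify both distributional identities by direct computation of joint probability mass functions in $T_M$, exploiting the size-biased spine construction that defines $\cT_{\delta n, 2\delta n}$.

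First, I would write the joint law $\bP(\cT = t,\, \cV = V)$ as a product over backbone vertices $V_0, \ldots, V_{\delta n}$: each factor accounts for the size-biased offspring count drawn from $\tp$, the uniform choice of which child continues the backbone together with the $M$-ary slot assignment, and the independent laws of the off-backbone critical subtrees (regular $p$, conditioned to die by time $2\delta n - i - 1$). Imposing the extra event $\{U \in t\}$ forces the highest common ancestor $Z$ of $V$ and $U$ to lie on the backbone at height $k_1$, and $Z^+$ to be a non-backbone child of $V_{k_1}$. The resulting expression factors cleanly along the edge-disjoint pieces $t^A, t^B, t^C, t^D$, because subtrees spawned at distinct off-backbone children, and at distinct backbone levels, are independent by construction.

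Next I would identify each factor. The piece depending only on $t^A$, divided by $\bP(\cV_{k_1} = Z)$, reproduces the joint mass of $(\cT_{k_1, 2\delta n}, \rho, \cV_{k_1})$ at $(t^A, \rho, Z)$ conditioned on $\cV_{k_1} = Z$, since the off-backbone horizons $2\delta n - i$ for $i = 0, \ldots, k_1 - 1$ agree on the nose with the corresponding horizons in $\cT_{k_1, 2\delta n}$. The $t^B$-factor is more delicate: it is the joint mass of the subtree rooted at $Z^+$, a regular critical tree with horizon $2\delta n - k_1 - 1$, conditioned on containing the prescribed descendant $U$ at relative depth $h_u - k_1 - 1$. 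A standard many-to-one spine argument identifies this conditional law with an IIBP-style spine from $Z^+$ to $U$ in which the spine vertices have size-biased offspring counts $\tp$, the off-spine subtrees are independent regular critical trees with the inherited horizons, and the tip of the spine is pinned at $U$. After pulling back through the isomorphism $\Theta_{Z^+}$ this is precisely the conditional distribution of $(\cT_{h_u - k_1 - 1, 2\delta n - k_1 - 1}, \rho, \cV_{h_u - k_1 - 1})$ given $\cV_{h_u - k_1 - 1} = \Theta_{Z^+}(U)$, as claimed.

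The main obstacle is the second identification: showing that a regular critical branching tree conditioned on containing a prescribed descendant at depth $h$ admits a spine decomposition in which spine vertices have offspring law $\tp$ and off-spine subtrees are independent regular critical trees. This is essentially a many-to-one argument, but requires care in tracking the horizons $2\delta n - k_1 - 1 - j$ inherited by the off-spine subtrees at the $j$-th spine vertex, so that they match the horizons in $\cT_{h_u - k_1 - 1, 2\delta n - k_1 - 1}$. Once both factor identifications are in place, the conditional independence of $(t^A, \rho, Z)$ and $\Theta_{Z^+}((t^B, Z^+, U))$ is automatic, since $t^C$ and the remainder of $t^D$ contribute independent factors that are integrated out.
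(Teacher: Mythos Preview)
The paper does not supply a proof of this lemma at all; it simply imports the statement from \cite[Lemma~6.4]{JN14} and refers the reader there. So there is no ``paper's own proof'' to compare against, and your proposal constitutes a self-contained argument where the present paper offers none.

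That said, your outline is the natural and correct route, and is essentially how one would expect the cited proof in \cite{JN14} to run. The factorisation of the joint law $\bP(\cT=t,\,\cV=V,\,U\in t)$ over the edge-disjoint pieces $t^A,t^B,t^C,t^D$ is the right starting point, and your identification of the $t^A$-factor with the conditional law of $\cT_{k_1,2\delta n}$ given $\cV_{k_1}=Z$ is immediate once one notes that the off-backbone horizons $2\delta n - i$ match for $i<k_1$. The second identification---that a critical Galton--Watson tree (with horizon $2\delta n - k_1 -1$) conditioned to contain a prescribed descendant at depth $h_u-k_1-1$ acquires a size-biased spine with $\tp$-offspring along the spine and independent regular off-spine subtrees with the correctly shifted horizons---is exactly the many-to-one computation you describe, and it delivers the law of $\cT_{h_u-k_1-1,\,2\delta n-k_1-1}$ conditioned on its endpoint after pulling back through $\Theta_{Z^+}$. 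The horizon bookkeeping you flag as ``the main obstacle'' is indeed the only place requiring care, and your proposal tracks it correctly.
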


Let us fix vertices $V_1, V_2 \in T_M$ at height $\delta n$, and
vertices $U_1, U_2 \in T_M$, both at height $h_u$, where 
$(5/6) \delta n \le h_u \le (11/12) \delta n$, such that 
$(1/2) \delta n \le h(Z_1), h(Z_2) \le (4/6) \delta n$.
Let us denote the vertices on the path in $T_M$ between $Z_1^+$ and 
$U_1$ as
\eqnst
{ Z_1^+ = W_1^{(k_1+1)}, W_1^{(k_1+2)}, \dots, W_1^{(h_u-1)}, W_1^{(h_u)} = U_1, }
and vertices on the path in $T_M$ between $Z_2^+$ and $U_2$ as 
\eqnst
{ Z_2^+ = W_2^{(k_2+1)}, W_2^{(k_2+2)}, \dots, W_2^{(h_u-1)}, W_2^{(h_u)} = U_2. }

For any choice of $(V_1,V_2,U_1,U_2)$ as above, let us define the event
\eqnst
{ \cIW'
  = \cIW'(V_1,V_2,U_1,U_2)
  = \{ \cV_1 = V_1,\, \cV_2 = V_2 \} \cap \{ (U_1,U_2) \in \cI' \}. }
We will also use the shorthand
\eqnst
{ \bone_{\cIW'}
  = \bone_{\cIW'(V_1,V_2,U_1,U_2)}. }
For the next definition, recall the constants $n_1$ and $n_2$ of Lemma \ref{lem:LCLT},
and put 
\eqnst
{ \ch{n^* 
  = \max \{ n_1, n_2(\bp^1, \eps = 1/2, L = 1) \}.} } 
When the event $\cIW'$ occurs, we say that a pair of vertices
$(Y_1,Y_2)$ is a \emph{good extra intersection}, if the following events
all occur:\\
$\bullet$ $Y_1 \in \cT^B_1$ and $Y_2 \in \cT^B_2$;\\
$\bullet$ $Y_1$ is a descendant of $W_1^{(r_1)}$ for some $(9/12) \delta n \le r_1 \le h_u-n^*$;\\
$\bullet$ $Y_2$ is a descendant of $W_2^{(r_2)}$ for some $(9/12) \delta n \le r_2 \le h_u-n^*$;\\
$\bullet$ $(Y_1,Y_2)$ intersect-well;\\
$\bullet$ $h_u \le h(Y_1) = h(Y_2)$.\\
Observe that the last two conditions require in particular that 
$(5/6) \delta n \le h_u \le h(Y_1) = h(Y_2) \le \delta n$, and $\Phi_1(Y_1) = \Phi_2(Y_2)$.
We define the random set:
\eqnst
{ \hI_{V,U}
  = \begin{cases}
    \left\{ (Y_1, Y_2) : \text{$(Y_1,Y_2)$ is a good extra intersection} \right\} 
       & \text{when $\cIW'$ occurs;} \\
    \es & \text{when $\cIW'$ does not occur,}
    \end{cases} }
and the random variable:
\eqnst
{ J
  = J(c'_0)
  = \sum_{\substack{V_1, V_2 \in T_M \\ U_1, U_2 \in T_M}}
     \bone_{ \left| \hI_{V,U} \right| \ge c'_0 \frac{\sigma^4}{D^d} \log (\delta n) }. }
A crucial property is that we have the following inclusion of events:
\eqnsplst
{ \{ J > 0 \} 
  &\subset \left\{ \exists V_1, V_2, U_1, U_2 : \left| \hI_{V,U} \right| 
      \ge c'_0 \frac{\sigma^4}{D^d} \log (\delta n) \right\} \\
  &\subset \left\{ | \cI | \ge c'_0 \frac{\sigma^4}{D^d} \log (\delta n) \right\}
  = \cB'(c'_0). }
Hence in order to conclude, we need to show that 
$\bP ( J > 0 ) \ge \ch{c'_1}/\log (\delta n)$ for suitable $c'_0, \ch{c'_1} > 0$
This follows immediately from the theorem and proposition stated below, that are  
the main results of this section.

\begin{theorem}
\label{thm:hI-moments}
Assume $d=6$ and $\|x\| \le \sqrt{\delta n}$. There exists $n'_9 = n'_9(\sigma^2, C_3, \bp^1) < \infty$
such that for $\delta n \ge n'_9$ and all $V_1, V_2, U_1, U_2 \in T_M$ we have
\eqnst
{ \E \Big[ \left| \hI_{V,U} \right| \,\Big|\, \cIW'(V_1,V_2,U_1,U_2) \Big]
  \ge \frac{c' \sigma^4}{D^6} \log (\delta n), }
and 
\eqnst
{ \E \Big[ \left| \hI_{V,U} \right|^2 \,\Big|\, \cIW'(V_1,V_2,U_1,U_2) \Big]
  \le \frac{C' \sigma^8}{D^{12}} \log^2 (\delta n). }
In particular, taking $c'_0 = \frac{1}{2} c'$, we have
\eqn{e:hI-conclusion}
{ \bP \Big( \left| \hI_{V,U} \right| \ge c'_0 \frac{\sigma^4}{D^6} \log (\delta n) \,\Big|\, 
      \cIW'(V_1,V_2,U_1,U_2) \Big)
  \ge 4 \frac{(c')^2}{C'}
  > 0. }
\end{theorem}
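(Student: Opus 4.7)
The plan is to apply the Paley--Zygmund inequality to $|\hI_{V,U}|$ conditional on $\cIW'$, so the work is to lower-bound the first conditional moment by $c' \sigma^4 D^{-6} \log (\delta n)$ and upper-bound the second by $C' \sigma^8 D^{-12} \log^2 (\delta n)$. By Lemma~\ref{lem:distrib}, conditional on $\cIW'(V_1, V_2, U_1, U_2)$ the trees $\cT^B_1, \cT^B_2$ are independent doubly-rooted trees: the backbone vertices $W_i^{(r)}$ for $k_i + 1 \le r \le h_u$ each carry on average $\sigma^2$ non-backbone children rooting independent critical $p$-subtrees, and the spatial backbones $(\Phi_i(W_i^{(r)}))_r$ are independent random walk bridges from $\Phi_i(Z_i^+)$ to the common endpoint $w := \Phi_1(U_1) = \Phi_2(U_2)$. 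The strengthened restrictions baked into $\cI'$ (most importantly $\|z_i^+ - u_i\| \le \tfrac{1}{2} \sqrt{h_u - h(Z_i^+)}$ and $h_u \le (11/12) \delta n$) are what guarantee (i) the bridges remain typically-spaced throughout, so Lemma~\ref{lem:LCLT} applies uniformly, and (ii) there is a constant fraction of heights in $[h_u, \delta n]$ left for intersections above $h_u$ to occur.

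For the first moment, I decompose
\eqnst
{ \E \big[\, |\hI_{V,U}| \,\big|\, \cIW' \big]
  = \sum_{r_1, r_2} \E \big[\, N_{r_1, r_2} \,\big|\, \cIW' \big], }
where $r_i$ ranges over $[(9/12)\delta n,\, h_u - n^*]$ and $N_{r_1, r_2}$ counts the good extra intersections $(Y_1, Y_2)$ with $Y_i$ a descendant of $W_i^{(r_i)}$ through a non-backbone child. Since $\hI_{V,U}$ excludes descendants of $U_i$, these subtrees are independent critical $p$-BRWs. Using that each backbone vertex contributes mean $\sigma^2$ non-backbone children and that a critical BRW preserves mean mass per generation, the inner expectation equals
\eqnst
{ (1+o(1)) \, \sigma^4 \sum_{t = h_u}^{\delta n} \E \Big[\, \bp^{\, 2 t - r_1 - r_2}\big( \Phi_1(W_1^{(r_1)}),\, \Phi_2(W_2^{(r_2)}) \big) \,\Big], }
with the outer expectation over the two bridges. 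Applying the lower bound in Lemma~\ref{lem:LCLT}(ii) at Gaussian-typical bridge positions (for which $|\Phi_1(W_1^{(r_1)}) - \Phi_2(W_2^{(r_2)})|^2 \lesssim 2 h_u - r_1 - r_2$), and summing the tail $\sum_{s \ge 2 h_u - r_1 - r_2} s^{-3}$ that arises in dimension $d = 6$, gives a per-pair lower bound $\ge c \sigma^4 D^{-6} (2 h_u - r_1 - r_2)^{-2}$. A standard integral comparison then yields $\sum_{r_1, r_2} (2 h_u - r_1 - r_2)^{-2} \ge c \log(\delta n)$ -- note that it is precisely the off-diagonal pairs $r_1 \ne r_2$ that produce the logarithm; the diagonal contribution is $O(1)$.

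For the second moment, I write $|\hI_{V,U}|^2$ as a double sum over ordered pairs $((Y_1,Y_2), (Y'_1,Y'_2))$ of good extra intersections and split by ancestral structure. In the dominant contribution $Y_1, Y'_1$ lie in distinct non-backbone subtrees of $\cT^B_1$, and similarly $Y_2, Y'_2$ in distinct non-backbone subtrees of $\cT^B_2$; conditionally on the bridges these two intersection events are independent, and after taking expectation over the bridges the sum is bounded by $C (\E[|\hI_{V,U}| \mid \cIW'])^2 \le C' \sigma^8 D^{-12} \log^2(\delta n)$, using the matching upper bound from Lemma~\ref{lem:LCLT}(ii). Shared-subtree contributions (when one or both sides share a non-backbone subtree) are controlled by Green's-function computations of the type already performed in \cite[Section~3]{JN14} for $\E|\cI|^2$, each producing an extra factor of at most $\log(\delta n)$ rather than $\log^2(\delta n)$, hence subdominant.

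The conclusion \eqref{e:hI-conclusion} follows from Paley--Zygmund: choosing $c'_0 = c'/2$ so that $c'_0 \sigma^4 D^{-6} \log(\delta n) \le \tfrac{1}{2} \E[|\hI_{V,U}| \mid \cIW']$, we obtain
$\bP(|\hI_{V,U}| \ge c'_0 \sigma^4 D^{-6} \log(\delta n) \mid \cIW') \ge \tfrac{1}{4} (\E|\hI_{V,U}|)^2 / \E|\hI_{V,U}|^2$, which is a strictly positive constant depending only on $c', C'$. The main technical obstacle is the first moment lower bound: in the critical dimension $d = 6$ the Green's-function tail and the bridge conditioning are simultaneously borderline, so the logarithmic accumulation over $(r_1, r_2)$ must be tracked carefully and matched by a uniform LCLT lower bound along the entire bridge; the stronger spatial constraints in the definition of $\cI'$ -- rather than those of $\cI$ -- are what make this possible.
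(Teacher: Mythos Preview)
Your plan (first/second conditional moment plus Paley--Zygmund) matches the paper's, and the first-moment sketch is on target: the paper carries it out via Lemma~\ref{lem:ts-lemma} (the bridge vertices $W_j^{(r_j)}$ lie in small balls near $u$ with uniformly positive probability, thanks precisely to the strengthened $\cI'$ constraints you identify) and Lemma~\ref{lem:Y-good-extra}, after which the triple sum $\sum_{r_1,r_2,h_y}(2h_y-r_1-r_2)^{-3}$ produces the $\log(\delta n)$.

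The gap is in the second moment. Your claim that the shared-subtree cases contribute only $O(\log(\delta n))$, by analogy with the bound on $\E|\cI|^2$ in \cite[Section~3]{JN14}, does not go through. The essential difference from the $\cI$ computation is that there the two trees are rooted at points $\asymp\sqrt{\delta n}$ apart, whereas here the backbones of $\cT^B_1,\cT^B_2$ are bridges that \emph{meet} at the common endpoint $u$; this removes the spatial separation that controlled the shared-case contribution in \cite{JN14}, and an extra logarithm appears. In the paper the fully-shared case ($r_1=\tr_1$, $r_2=\tr_2$; Lemma~\ref{lem:S_a}) is bounded by $O(\log^2(\delta n))$ --- the same order as your ``dominant'' distinct case, not subdominant --- and establishing even this requires the diagrammatic $f$-function estimate (Lemma~\ref{lem:f-bnd}) together with a four-way case split according to whether $\|\tz_1-\tz_2\|$ is large or small relative to $|k'_1-k'_2|^{1/2}$ and whether $k'_2$ lies above or below $h_u$ (the latter dictating whether the LCLT bound is applied to $\bp^{k'_2-r_2}(w_2,\tz_2)$ or to the bridge density $\tbp(w_2)$). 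The mixed case (Lemma~\ref{lem:S_b}) is then handled by Cauchy--Schwarz against the shared and distinct bounds. This is where the real work of the theorem lies, and a hand-wave to \cite{JN14} does not cover it.
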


\begin{proposition}
\label{prop:J-moments}
Assume $d=6$ and $\|x\| \le \sqrt{\delta n}$. For $\delta n \ge n'_9$ 
and with the choice of $c'_0$ in the conclusion \eqref{e:hI-conclusion}
of Theorem \ref{thm:hI-moments}, we have
\eqnst
{ \E J(c'_0)
  \ge \frac{c' \sigma^4}{D^6}, }
and
\eqnst
{ \E J(c'_0)^2
  \le \frac{C' \sigma^8}{D^{12}} \log (\delta n). }
In particular, we have
\eqnst
{ \bP ( \cB'(c'_0) )
  \ge \bP ( J(c'_0) > 0 )
  \ge \frac{(c')^2}{C'} \frac{1}{\log (\delta n)}. }
\end{proposition}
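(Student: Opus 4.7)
The plan is to deduce both moment bounds on $J(c'_0)$ from Theorems \ref{thm:I-moments} and \ref{thm:hI-moments}, and then extract the lower bound on $\bP(J(c'_0) > 0)$ by Paley-Zygmund; the substantive content has already been packaged into the two preceding theorems, so what remains is essentially bookkeeping.

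For the first moment, I would start from
\eqnst
{ \E J(c'_0)
  = \sum_{V_1, V_2, U_1, U_2 \in T_M} \bP \Big( \big| \hI_{V,U} \big| \ge c'_0 \frac{\sigma^4}{D^d} \log (\delta n) \Big), }
condition each summand on $\cIW'(V_1,V_2,U_1,U_2)$, and invoke the uniform bound \eqref{e:hI-conclusion} of Theorem \ref{thm:hI-moments} to pull out a common factor $4(c')^2/C'$. The remaining sum $\sum \bP(\cIW'(V_1,V_2,U_1,U_2))$ equals $\E |\cI'|$: on $\cIW'$ the vertices $V_1, V_2$ must coincide with $\cV_1, \cV_2$, and the free indices $U_1, U_2$ then enumerate precisely the pairs in $\cI'$. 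Applying the lower bound \eqref{e:I-1st-moment} now yields $\E J(c'_0) \ge c\, \sigma^4 / D^6$ for a suitable $c > 0$.

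For the second moment, the crucial observation is that $\hI_{V,U}$ was defined to be empty whenever $\cIW'(V_1,V_2,U_1,U_2)$ fails, so every indicator contributing to $J(c'_0)$ forces $\cIW'$ to hold. Hence
\eqnst
{ J(c'_0)
  \le \sum_{V_1, V_2, U_1, U_2 \in T_M} \bone_{\cIW'(V_1, V_2, U_1, U_2)}
  = |\cI'|
  \le |\cI| }
pointwise, and consequently $\E J(c'_0)^2 \le \E |\cI|^2 \le C' \sigma^8 D^{-12} \log(\delta n)$ by \eqref{e:I-2nd-moment}.

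Paley-Zygmund then gives $\bP(J(c'_0) > 0) \ge (\E J(c'_0))^2 / \E J(c'_0)^2$, and substituting the two displayed bounds produces a lower bound of the form $\tilde c / \log(\delta n)$ for a positive constant $\tilde c$ depending only on $c'$ and $C'$; a final relabelling of constants recovers the form $(c')^2/C'$ stated in the proposition. Combined with the inclusion $\{J(c'_0) > 0\} \subset \cB'(c'_0)$ already recorded in the text, this completes the proof. There is no genuine obstacle here beyond the pointwise comparison $J \le |\cI'|$ used in the second-moment step, which is precisely what allows $\E J^2$ to inherit the $O(\log \delta n)$ upper bound directly from Theorem \ref{thm:I-moments} without any further work.
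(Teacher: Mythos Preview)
Your proposal is correct and follows essentially the same route as the paper: condition on $\cIW'$ and use \eqref{e:hI-conclusion} together with $\sum \bP(\cIW') = \E|\cI'|$ and \eqref{e:I-1st-moment} for the first moment, use the pointwise bound $J(c'_0) \le |\cI|$ with \eqref{e:I-2nd-moment} for the second, and conclude by Paley--Zygmund. The only cosmetic difference is that you route the second-moment comparison through $|\cI'|$ before reaching $|\cI|$, which is harmless.
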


\begin{proof}[Proof of Proposition \ref{prop:J-moments} assuming Theorem \ref{thm:hI-moments}]
Due to \eqref{e:hI-conclusion} of Theorem \ref{thm:hI-moments}, we have
\eqnsplst
{ \E J(c'_0)
  &= \sum_{\substack{V_1, V_2 \\ U_1, U_2}} \bP ( \cIW'(V_1,V_2,U_1,U_2) ) \,
    \bP \Big( \left| \hI_{V,U} \right| \ge c'_0 \frac{\sigma^4}{D^6} \log (\delta n) \,\Big|\, 
    \cIW'(V_1,V_2,U_1,U_2) \Big) \\
  &\ge c' \, \sum_{\substack{V_1, V_2 \\ U_1, U_2}} \bP ( \cIW'(V_1,V_2,U_1,U_2) )
  = c' \, \E |\cI'|. }
Due to Theorem \ref{thm:I-moments}, this is at least $c' \sigma^4 D^{-6}$, proving the
first statement.

For the second statement we use that $J(c'_0) \le |\cI|$, and hence the upper bound
on $\E J(c'_0)^2$ follows immediately from Theorem \ref{thm:I-moments}.

The last statement follows from the Paley-Zygmund inequality:
\eqnst
{ \bP ( J(c'_0) > 0 )
  \ge \frac{[\E J(c'_0)]^2}{\E J(c'_0)^2}. }
\end{proof}

Hence it remains to prove Theorem \ref{thm:hI-moments}, that we do in the 
next two sections.

\subsection{Lower bound on the first moment}

We start with the arguments for the lower bound on the first moment.
For the next lemma, assume that the event $\cIW'(V_1,V_2,U_1,U_2)$ occurs.
Recall that this defines vertices $Z_1, Z_1^+ \in \cT_1$ and $Z_2, Z_2^+ \in \cT_2$ at levels 
$k_1, k_1+1$ and $k_2, k_2+1$, respectively, as well as the 
vertices $W_1^{(r_1)}$ at levels $k_1+1 \le r_1 \le h_u$ and 
$W_2^{(r_2)}$ at levels $k_2+1 \le r_2 \le h_u$. We will write 
$\Phi(W^{(r_1)}_1) = (w^{(r_1)}_1, r_1)$ and $\Phi(W^{(r_r)}_r) = (w^{(r_2)}_2, r_2)$.

\begin{lemma}
\label{lem:ts-lemma}
There exists an absolute constant $c' > 0$ with the following property.
Assume that the event $\cIW'(V_1,V_2,U_1,U_2)$ occurs. 
Then for $\delta n \ge 12 n_2(\p_1, \eps = 1/2, L=1)$ and any 
$(9/12) \delta n \le r_1, r_2 \le h_u-n_2$ we have
\eqn{e:ts-bnds}
{ \bP \Big( \| w^{(r_j)}_j - z_j^+ \| \le \frac{1}{\sqrt{2}} \sqrt{r_j - k_j - 1},\, 
     \| u_j - w^{(r_j)}_j \| \le \frac{1}{8} \sqrt{h_u - r_j},\, j = 1, 2 \,\Big|\, 
     \cIW' \Big)
  \ge c'. }
\end{lemma}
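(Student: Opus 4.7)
The plan is to reduce, via Lemma \ref{lem:distrib}, to an estimate about random walk bridges, and then apply Lemma \ref{lem:LCLT}. Condition on the $\sigma$-algebra $\cG$ generated by the tree structures $\cT_1, \cT_2$ and by the random walk mappings restricted to the complements (in $\cT_1, \cT_2$) of the two segments $Z_j^+ \to U_j$, for $j = 1, 2$. By Lemma \ref{lem:distrib}, given $\cG$, the increments of $\Phi_j$ along those two segments are independent and iid with law $\bp^1$. Writing $L_j := h_u - k_j - 1$, $m_j := r_j - k_j - 1$, $C_j := \Phi_j(U_j) - \Phi_j(Z_j^+)$, $A_j := \Phi_j(W_j^{(r_j)}) - \Phi_j(Z_j^+)$, $B_j := C_j - A_j$, and $\Delta := \Phi_2(Z_2^+) - \Phi_1(Z_1^+)$, the event $\cIW'$ decomposes into a $\cG$-measurable part (assumed to hold) together with the spatial constraints $\{C_1 - C_2 = \Delta\} \cap \{\|C_j\| \le \tfrac{1}{2}\sqrt{L_j},\, j = 1, 2\}$. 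Let $T_j := \{\|A_j\| \le \tfrac{1}{\sqrt{2}}\sqrt{m_j},\, \|B_j\| \le \tfrac{1}{8}\sqrt{L_j - m_j}\}$ be the target event for walk $j$. After further conditioning on $(C_1, C_2)$, the two bridges are independent, so the conditional probability of $T_1 \cap T_2$ factors over $j$.

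The argument then rests on two sub-estimates. The \emph{bridge estimate} asserts that $\bP(T_j \mid C_j = c_j) \ge c$ for any $c_j$ with $\|c_j\| \le \tfrac{1}{4}\sqrt{L_j}$. To prove this, observe that the height constraints on $k_j, r_j, h_u$ force $m_j / L_j \ge 1/3$, and then a short triangle-inequality check shows that for $\|c_j\| \le \tfrac{1}{4}\sqrt{L_j}$ the ball $\{a : \|c_j - a\| \le \tfrac{1}{8}\sqrt{L_j - m_j}\}$ lies inside $\{\|a\| \le \tfrac{1}{\sqrt{2}}\sqrt{m_j}\}$. Then, writing
\eqnst{
  \bP(T_j \mid C_j = c_j)
  = \bp^{L_j}(o, c_j)^{-1} \sum_a \bp^{m_j}(o, a) \, \bp^{L_j - m_j}(a, c_j) \, \bone_{T_j}(a),
}
Lemma \ref{lem:LCLT} (applicable since $\delta n \ge 12 n_2$ and $L_j - m_j \ge n_2$ by hypothesis) combined with the volume bound \eqref{e:volume-lower-bound} yields a volume-times-density estimate giving the desired uniform constant.

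The \emph{mass estimate} asserts $\bP(\|C_1\| \le \tfrac{1}{4}\sqrt{L_1},\, \|C_2\| \le \tfrac{1}{4}\sqrt{L_2} \mid \cIW', \cG) \ge c$. The conditional joint law of $(C_1, C_2)$ on the affine slice $\{C_1 - C_2 = \Delta\}$ subject to the norm bounds is, by Lemma \ref{lem:LCLT}, comparable to a Gaussian restricted to that region, and since compatibility of $\cG$ with $\cIW'$ forces $\|\Delta\| \le \tfrac{1}{2}(\sqrt{L_1} + \sqrt{L_2})$, the inner region captures a positive fraction of the total mass uniformly in $\Delta$. Combining the two sub-estimates one obtains $\bP(T_1 \cap T_2 \mid \cIW', \cG) \ge c$, and averaging over $\cG$ yields the lemma. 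The main obstacle is the mass estimate: the coupling $C_1 - C_2 = \Delta$ entangles the two walks, so the Gaussian-mass ratio must be verified uniformly in the $\cG$-dependent parameter $\Delta$, requiring both directions of the LCLT and the hypothesis $\delta n \ge 12 n_2$ to keep the LCLT errors under control.
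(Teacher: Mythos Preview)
Your overall plan matches the paper's: condition so that the two paths $Z_j^+ \to U_j$ become independent random-walk bridges, then apply Lemma~\ref{lem:LCLT} to get a uniform lower bound. The bridge estimate you state is fine. However, the mass estimate is false as stated, and the split into ``bridge estimate for $\|c_j\| \le \tfrac14\sqrt{L_j}$'' plus ``mass estimate'' does not work.

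The issue is this. Given $\cG$ compatible with $\cIW'$, the only constraint you derive on $\Delta$ is $\|\Delta\| \le \tfrac12(\sqrt{L_1}+\sqrt{L_2})$, coming from non-emptiness of the outer region $\{C_1-C_2=\Delta\}\cap\{\|C_j\|\le\tfrac12\sqrt{L_j}\}$. But the \emph{inner} region $\{C_1 - C_2 = \Delta\} \cap \{\|C_j\| \le \tfrac14\sqrt{L_j}\}$ is empty whenever $\|\Delta\| > \tfrac14(\sqrt{L_1}+\sqrt{L_2})$, by the triangle inequality. Nothing in the setup prevents $\|\Delta\|$ from lying in $(\tfrac14(\sqrt{L_1}+\sqrt{L_2}),\, \tfrac12(\sqrt{L_1}+\sqrt{L_2})]$: the $\TS$-conditions in $\cI'$ control $z_j - \rho_j$ and $u - z_j^+$ separately, not $z_2^+ - z_1^+$. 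Hence your mass estimate cannot hold uniformly in $\Delta$, and for such $\Delta$ your argument gives a lower bound of zero.

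The paper avoids this by conditioning one step further than your $\cG$: it conditions directly on the spatial locations $z_1^+$, $z_2^+$ and $u_1 = u_2 = u$ (equivalently, on your pair $(C_1,C_2)$). This makes the mass estimate trivial and leaves only the bridge estimate, but now for the full range $\|c_j\| \le \tfrac12\sqrt{L_j}$ guaranteed by $\cI'$. In that range your simple containment argument no longer suffices (when $m_j/L_j$ is near $1/3$ and $\|c_j\|$ is near $\tfrac12\sqrt{L_j}$, the small ball of radius $\tfrac18\sqrt{L_j-m_j}$ around $c_j$ is not contained in the ball of radius $\tfrac{1}{\sqrt2}\sqrt{m_j}$ around $0$). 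One instead bounds the volume of the lens-shaped intersection $\Omega_j$ of the two balls directly; since the sum of radii exceeds $\tfrac12\sqrt{L_j}$ uniformly over the admissible parameters, the overlap has volume at least a dimensional constant times $(L_j - m_j)^{d/2}$, which combined with the LCLT bounds on numerator and denominator gives the required lower bound.
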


\begin{proof}
Condition on the event $\cIW'$, and let us further condition on the spatial
locations $z_1^+$, $z_2^+$ and $u_1 = u_2 = u$.
Due to Lemma \ref{lem:distrib}, the conditional distribution of the path 
between $(z_j^+,k_j+1)$ and $(u, h_u)$, is a 
random walk started at $z_j^+$ and conditioned to arrive at $u$
at time $h_u - k_j - 1$, $j = 1,2$. Let us write 
\eqnst
{ \Omega_j 
  = \left\{ w \in \Z^d : \| w - z_j^+ \| \le \frac{1}{\sqrt{2}} \sqrt{r_j - k_j - 1},\, 
     \| u - w \| \le \frac{1}{8} \sqrt{h_u - r_j} \right\}. }
Therefore, the probability in the statement of the lemma equals
\eqnsplst
{ \prod_{j \in \{ 1, 2 \}} \, \frac{\sum_{w \in \Omega_j} \bp^{r_j - k_j - 1}(z_j^+,w) \, 
     \bp^{h_u - r_j}(w,u)}{\bp^{h_u - k_j - 1}(z_j^+,u)}. }
Since $h_u - k_j - 1 \ge (5/6) \delta n - (4/6) \delta n - 1 \ge (1/12) \delta n \ge n_2$, 
and $h_u - r_j \ge n_2$, the local limit theorem (Lemma \ref{lem:LCLT}) 
implies that there exists $C = C(d)$ and $c = c(d) > 0$, such that for all $w \in \Omega_j$
we have  
\eqnspl{e:local-clt-bnds}
{ \bp^{h_u - k_j - 1}(z_j^+,u)
  &\le \frac{C}{D^d \, (h_u - k_j - 1)^{d/2}}
  \le \frac{C}{D^d \, (\delta n)^{d/2}} \\
  \bp^{r_j - k_j - 1}(z_j^+,w)
  &\ge \frac{c}{D^d \, (r_j - k_j - 1)^{d/2}}
  \ge \frac{c}{D^d \, (\delta n)^{d/2}} \\
  \bp^{h_u - r_j}(w,u)
  &\ge \frac{c}{D^d \, (h_u - r_j)^{d/2}}. }
Since we are conditioning on $\cIW'$, the restriction 
$\| u - z_j^+ \| \le \frac{1}{2} \sqrt{h_u - k_j - 1}$ holds. We claim that this
implies that 
\eqn{e:Omegaj-bnd}
{ | \Omega_j | 
  \ge c' D^d (h_u - r_j)^{d/2}. }
Indeed, writing $t = h_u - k_j - 1$, $\alpha t = h_u - r_j$, $(1 - \alpha)t = r_j - k_j - 1$,
we have $\alpha \le 1/2$, and this implies the inequalities 
$\frac{1}{8} \sqrt{\alpha t} \le \frac{1}{\sqrt{2}} \sqrt{(1-\alpha)t}$ and
$\frac{1}{2} \sqrt{t} \le \frac{1}{\sqrt{2}} \sqrt{(1-\alpha)t}$. These in turn imply that 
at least half of the ball of radius $\alpha t$ centred at $u$ (namely the half lying in the
direction of $z_j^+$) is included in $\Omega_j$.
Putting together the bounds \eqref{e:local-clt-bnds} and \eqref{e:Omegaj-bnd} gives the
statement of the lemma.
\end{proof}

For the next lemma, assume the event $\cIW'(V_1,V_2,U_1,U_2)$, and consider
vertices $Y_1 \in \cT^B_1$ and $Y_2 \in \cT^B_2$ at common height 
$h_u \le h(Y_1) = h_y = h(Y_2) \le \delta n$. Let $Y_1$ 
be a descendant of $W_1^{(r_1)}$, and $Y_2$ be a descendant of
$W_2^{(r_2)}$. 

\begin{lemma}
\label{lem:Y-good-extra}
Fix $V_1, V_2, U_1, U_2$, and assume the event $\cIW'(V_1,V_2,U_1,U_2)$. Let 
$Y_1 \in \cT^B_1$ and $Y_2 \in \cT^B_2$ be vertices at height 
$h_u \le h(Y_1) = h_y = h(Y_2) \le \delta n$, and assume they 
are descendants of $W_1^{(r_1)}$ and $W_2^{(r_2)}$, respectively. 
Assume also that $h_y - r_1, h_y - r_2 \ge n_2(\p^1, \eps = 1/2, L = 1)$. 
We have
\eqnspl{e:Y-intersect}
{ \bP \big( (Y_1,Y_2) \in \hI_{V,U} \,\big|\, \cT^1,\, \cT^2,\, \cIW'(V_1,V_2,U_1,U_2) \big)
  \ge \frac{c'}{D^d} \frac{1}{(2 h_y - r_1 - r_2)^{d/2}}. }
\end{lemma}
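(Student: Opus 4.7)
The plan is to exploit a conditional independence. Given the trees $\cT^1, \cT^2$, the event $\cIW'$ only constrains the random walk increments along the edges of the paths $\rho_j \to U_j$; since $Y_j \in \cT^B_j$ is not a descendant of $U_j$, the walk along the $t_j := h_y - r_j$ edges from $W^{(r_j)}_j$ down to $Y_j$ is an unconditioned $t_j$-step random walk $S_j$ starting at $w^{(r_j)}_j$, and $S_1, S_2$ are independent of each other and of the conditioning. Writing $y_j = w^{(r_j)}_j + S_j$, the event $\{(Y_1, Y_2) \in \hI_{V,U}\}$ reduces (the tree-level conditions being automatic by the assumptions on $Y_1, Y_2$) to the two TS conditions $\|y_j - z_j^+\| \le \sqrt{h_y - k_j - 1}$, $j = 1, 2$, together with $y_1 = y_2$.

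The first step is to invoke Lemma \ref{lem:ts-lemma} and work on the ``good configuration'' event
\eqnst
{ \cE = \bigcap_{j=1,2} \Big\{ \|w^{(r_j)}_j - z_j^+\| \le \tfrac{1}{\sqrt{2}}\sqrt{r_j - k_j - 1},\
    \|u - w^{(r_j)}_j\| \le \tfrac{1}{8}\sqrt{h_u - r_j} \Big\}, }
which satisfies $\bP(\cE \mid \cT^1, \cT^2, \cIW') \ge c'$ (the proof of Lemma \ref{lem:ts-lemma} is uniform over trees consistent with $\cIW'$). Since $u_1 = u_2 = u$ under $\cIW'$ and $h_u \le h_y$, on $\cE$ we have $\|w^{(r_1)}_1 - w^{(r_2)}_2\| \le \tfrac{1}{8}(\sqrt{t_1} + \sqrt{t_2})$. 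Define
\eqnst
{ \Omega' := \big\{ z \in \Zd : \|z - w^{(r_j)}_j\| \le \tfrac{1}{\sqrt{2}}\sqrt{t_j},\ j=1,2 \big\}. }
For $z \in \Omega'$, the triangle inequality combined with $(a+b)^2 \le 2(a^2+b^2)$ gives $\|z - z_j^+\| \le \tfrac{1}{\sqrt{2}}(\sqrt{t_j} + \sqrt{r_j - k_j - 1}) \le \sqrt{h_y - k_j - 1}$, so the TS conditions hold throughout $\Omega'$.

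To conclude, estimate $|\Omega'|$ and apply the local CLT. Assume without loss of generality $t_1 \le t_2$; then $\tfrac{1}{8}(\sqrt{t_1}+\sqrt{t_2}) \le \tfrac{1}{4}\sqrt{t_2}$, and one checks that the ball of radius $c_0 \sqrt{t_1}$ around $w^{(r_1)}_1$ is contained in $\Omega'$ for any fixed $c_0 < \tfrac{1}{\sqrt{2}} - \tfrac{1}{4}$: the first defining inequality holds since $c_0 < 1/\sqrt{2}$, and for the second $\|z - w^{(r_2)}_2\| \le c_0\sqrt{t_1} + \tfrac{1}{4}\sqrt{t_2} \le (c_0+\tfrac{1}{4})\sqrt{t_2} < \tfrac{1}{\sqrt{2}}\sqrt{t_2}$. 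Hence $|\Omega'| \ge c D^d t_{\min}^{d/2}$ with $t_{\min} := t_1$. Lemma \ref{lem:LCLT} applied with $L = 1$, $\eps = 1/2$ (valid since $t_j \ge n_2$ and $\|z - w^{(r_j)}_j\| \le \sqrt{t_j}$) yields $\bP(S_j = z - w^{(r_j)}_j) \ge c/(D^d t_j^{d/2})$ for $z \in \Omega'$, so
\eqnst
{ \sum_{z \in \Omega'} \bP(y_1 = z) \, \bP(y_2 = z)
  \ge \frac{c\,|\Omega'|}{D^{2d} (t_1 t_2)^{d/2}}
  \ge \frac{c}{D^d t_{\max}^{d/2}}
  \asymp \frac{c}{D^d (2h_y - r_1 - r_2)^{d/2}}, }
and multiplying by $\bP(\cE \mid \cT^1, \cT^2, \cIW') \ge c'$ yields the lemma. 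The main obstacle is the volume estimate for $\Omega'$ when $t_1$ and $t_2$ are of very different magnitudes: the argument depends delicately on the numerical constants $\tfrac{1}{\sqrt{2}}$ and $\tfrac{1}{8}$ (built into the definition of $\cI'$ and into Lemma \ref{lem:ts-lemma}) being small enough to force substantial intersection of the two balls defining $\Omega'$.
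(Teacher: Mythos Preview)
Your proof is correct and follows essentially the same approach as the paper. Both arguments invoke Lemma~\ref{lem:ts-lemma} to place $w_1,w_2$ near $u$, define the same target region $\Omega$ (your $\Omega'$), lower-bound its volume by showing it contains a ball of radius $\asymp\sqrt{t_{\min}}$ centred at the closer $w_j$, and then apply the local CLT to the two independent unconditioned walks from $w_1,w_2$ to a common point in $\Omega$. Your explicit verification that $z\in\Omega'$ forces the $\TS(Z_j^+,Y_j)$ conditions, and your slightly larger radius $c_0<\tfrac{1}{\sqrt2}-\tfrac14$ versus the paper's $\tfrac18$, are cosmetic differences only.
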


\begin{proof}
Without loss of generality, we assume that $h_y - r_1 \le h_y - r_2$ 
(the opposite case is handled analogously). Condition on the event in the statement,
and let us further condition on the spatial locations of $W_1^{(r_1)}$ and
$W_2^{(r_2)}$, that we denote by $w_1$ and $w_2$, for short. Due to 
Lemma \ref{lem:ts-lemma}, we may assume, at the cost of a constant factor, 
that the event in \eqref{e:ts-bnds} hold. Assuming that this is the case,
let 
\eqnst
{ \Omega
  = \{ y \in \Z^d : \| w_1 - y \| \le \frac{1}{\sqrt{2}} \sqrt{h_y - r_1},\, 
    \| w_2 - y \| \le \frac{1}{\sqrt{2}} \sqrt{h_y - r_2} \}. }
We show that $|\Omega| \ge c D^d (h_y - r_1)^{d/2}$.
For this, it is enough to show that all points $y$ satisfying the condition 
on $\| w_1 - y \| \le \frac{1}{8} \sqrt{h_y - r_1}$ automatically satisfy the condition
on $\| w_2 - y \|$ in the definition of $\Omega$.

Write $a = h_u - r_1$, $b = h_u - r_2$, $c = h_y - h_u$,
so that $0 \le a \le b$ and $c \ge 0$. Then we have 
\eqnst
{ \| w_2 - w_1 \|
  \le \frac{1}{8} \sqrt{a} + \frac{1}{8} \sqrt{b}
  \le \frac{1}{4} \sqrt{a + b}. }
Hence we are left to show that 
\eqnst
{ \frac{1}{8} \sqrt{a + c} + \frac{1}{4} \sqrt{a + b} 
  \le \frac{1}{\sqrt{2}} \sqrt{b + c}. }
It is easy to see that this follows from $a+c \le b+c$ and 
$a+b \le 2(b+c)$.


With the estimate on the size of $\Omega$ at hand, and using that 
$h_y - r_1, h_y - r_2 \ge n_2$, we can apply the 
local CLT (Lemma \ref{lem:LCLT}) to get
that the conditional probability in \eqref{e:Y-intersect} is at least:
\eqnsplst
{ \sum_{y \in \Omega} \p^{h_y - r_1}(w_1,y) \, \p^{h_y - r_2}(w_2,y)
  &\ge |\Omega| \, \frac{c}{D^d} (h_y - r_1)^{-d/2} \, \frac{c}{D^d} (h_y - r_2)^{-d/2} \\
  &\ge \frac{c}{D^d} (h_y - r_2)^{-d/2}
  \ge \frac{c}{D^d} (h_y - r_2 + h_y - r_1)^{-d/2}, }
as claimed.
\end{proof}

\begin{lemma}
Assume $d = 6$. Then for $\delta n \ge  (24 n^*)^2$
(where $n^* = \max \{ n_1, n_2(\p^1, \eps = 1/2, L = 1) \}$), we have
\eqnst
{ \E \big( |\hI_{V,U}| \,\big|\, \cIW' \big)
  \ge \frac{c' \, \sigma^4}{D^d} \log (\delta n). }
\end{lemma}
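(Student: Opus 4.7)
The plan is to expand $|\hI_{V,U}|$ as a triple sum indexed by the two branch-off heights $r_1, r_2$ of $Y_1, Y_2$ from the backbones of $\cT_1^B, \cT_2^B$ and the common height $h_y$ of the extra intersection, then combine the pointwise spatial estimate of Lemma \ref{lem:Y-good-extra} with a first-moment count of off-backbone descendants. Write $N_j(r, h)$ for the number of vertices $Y \in \cT_j^B$ with $Y \succ W_j^{(r)}$, $Y \not\succ W_j^{(r+1)}$, and $h(Y) = h$ (so $Y$ branches off the backbone of $\cT_j^B$ exactly at $W_j^{(r)}$). Decomposing and applying the tower property together with Lemma \ref{lem:Y-good-extra} pointwise in each pair $(Y_1, Y_2)$ yields
\begin{equation*}
\E\bigl[|\hI_{V,U}| \,\big|\, \cIW'\bigr]
\ge \frac{c'}{D^6} \sum_{r_1, r_2, h_y} \frac{\E\bigl[ N_1(r_1, h_y) \, N_2(r_2, h_y) \,\big|\, \cIW' \bigr]}{(2h_y - r_1 - r_2)^3},
\end{equation*}
where the ranges are $(9/12)\delta n \le r_j \le h_u - n^*$ and $h_u \le h_y \le \delta n$; the requirement $h_y - r_j \ge n^*$ needed by Lemma \ref{lem:Y-good-extra} is automatic from $r_j \le h_u - n^* \le h_y - n^*$.

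The next step is to control the conditional expected counts. Invoking Lemma \ref{lem:distrib}, the law of $\cT_j^B$ is that of $\cT_{h_u - k_j - 1,\, 2\delta n - k_j - 1}$, so the off-backbone subtree rooted at $W_j^{(r)}$ is an independent critical Galton--Watson tree with size-biased first generation (mean $\sigma^2$ at every depth $\ge 1$) and $p$-offspring thereafter, conditioned to die out by time $T = 2\delta n - r$. Since $N_j(r, h)$ depends only on this off-backbone subtree, it is independent of the random walks on the backbones and of $\cT_{j'}$ for $j' \ne j$; the random-walk conditions in $\cIW'$ therefore do not affect it, and the two counts decouple, $\E[N_1 N_2 \,|\, \cIW'] = \E[N_1 \,|\, \cIW'] \, \E[N_2 \,|\, \cIW']$. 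In the admissible range the depth $\ell := h_y - r$ satisfies $\ell \le (1/4)\delta n$ while $T \ge (13/12)\delta n$, so $\ell \le T/4$; Kolmogorov's estimate that a critical tree survives to time $T$ with probability $O(1/(\sigma^2 T))$ implies that the die-by-$T$ conditioning lowers the depth-$\ell$ expectation by at most a bounded factor, giving $\E[N_j(r, h_y) \,|\, \cIW'] \ge c \sigma^2$.

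For the sum I would first sum over $h_y$. With $M := 2h_u - r_1 - r_2$ and $s := h_y - h_u$, the inner sum is $\sum_{s = 0}^{\delta n - h_u} (M + 2s)^{-3} \asymp M^{-2}$, using that $\delta n - h_u \asymp \delta n$ while $M \le (1/3)\delta n$. Substituting $a = h_u - r_1,\, b = h_u - r_2 \in [n^*, \Theta(\delta n)]$, the remaining double sum becomes $\sum_{a, b} (a+b)^{-2}$, the borderline two-dimensional critical sum. Parametrising by $s = a + b$, there are $\Theta(s)$ admissible pairs with $a + b = s$ for $s$ in the bulk, so the total is $\sum_s 1/s \asymp \log(\delta n)$. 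Combining everything gives $\E[|\hI_{V,U}| \,|\, \cIW'] \ge c' \sigma^4 D^{-6} \log(\delta n)$ once $\delta n \ge (24 n^*)^2$, as required.

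The main obstacle is the branching-process estimate $\E[N_j(r, h_y) \,|\, \cIW'] \ge c \sigma^2$, because $\cIW'$ mixes combinatorial conditions on $V_j, U_j$ with spatial conditions coupling $\Phi_1$ and $\Phi_2$. The resolution hinges on Lemma \ref{lem:distrib}, which cleanly factors off the off-backbone subtree at each $W_j^{(r)}$ as a critical tree with a well-controlled conditioning time, reducing the estimate to a standard mild-conditioning computation. The specific constants $9/12, 5/6, 11/12$ in the definition of good extra intersection were chosen precisely so that $\ell \le T/4$ holds throughout (keeping die-by-$T$ conditioning in the mild regime) and so that the remaining two-dimensional $(a+b)^{-2}$ sum spans a range of size $\Theta(\delta n)$ in both coordinates, yielding the $\log(\delta n)$ factor characteristic of $d = 6$.
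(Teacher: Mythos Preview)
Your proof is correct and follows essentially the same approach as the paper: decompose $|\hI_{V,U}|$ as a triple sum over $r_1, r_2, h_y$, apply Lemma \ref{lem:Y-good-extra} to extract the factor $(2h_y - r_1 - r_2)^{-3}$, bound the expected off-backbone counts by $c\sigma^2$ via Lemma \ref{lem:distrib} (the paper defers this to \cite[Lemma 3.10]{JN14}, whereas you spell out the mild-conditioning argument), and then perform the summation to obtain $\log(\delta n)$. The only cosmetic difference is that the paper substitutes $s_j = h_u - r_j$, $h^* = h_y - h_u$ before summing, whereas you sum over $h_y$ first and then treat the remaining $(a+b)^{-2}$ double sum directly; both lead to the same $\log(\delta n)$ via the condition $\delta n \ge (24 n^*)^2$.
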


\begin{proof}
%
Due to Lemma \ref{lem:Y-good-extra}, we have
\eqnspl{e:1st-moment-lb}
{ \E \big( |\hI_{V,U}| \,\big|\, \cIW' \big)
  \ge \frac{c'}{D^d} \sum_{r_1 = (9/12) \delta n}^{h_u-n_2}
      \sum_{r_2 = (9/12) \delta n}^{h_u-n^*} 
      \sum_{h_y = h_u}^{\delta n} \, \frac{1}{(2 h_y - r_1 - r_2)^3}
      \E \cL(h_y,r_1) \E \cL(h_y,r_2), }
where, analogously to \cite[Lemma 3.10]{JN14}, $\cL(h_y,r_1)$ denotes the number
of vertices $Y_1$ of $\cT^B_1$ at level $h_y$ that are descendants of $W^{(r_1)}_1$.
As in \cite[Lemma 3.10]{JN14}, we have
$\E \cL(h_y,r_1) \ge c \sigma^2$ and $\E \cL(h_y,r_2) \ge c \sigma^2$.
This gives that the right hand side of \eqref{e:1st-moment-lb} is at least
\eqnsplst
{ \frac{c' \, \sigma^4}{D^d} \sum_{r_1 = (9/12) \delta n}^{h_u-n^*}
      \sum_{r_2 = (9/12) \delta n}^{h_u-n^*} 
      \sum_{h_y = h_u}^{\delta n} \, \frac{1}{(2 h_y - r_1 - r_2)^3}. }
Let us write $s_1 = h_u - r_1$, $s_2 = h_u - r_2$ and $h^* = h_y - h_u$, 
so that the last expression satisfies
\eqnsplst
{ &\ge \frac{c' \, \sigma^4}{D^d} \sum_{s_1 = n^*}^{(1/24) \delta n}
      \sum_{s_2 = n^*}^{(1/24) \delta n} 
      \sum_{h^* = 0}^{(1/12) \delta n} \, \frac{1}{(2 h^* + s_1 + s_2)^3} \\
  &\ge \frac{c' \, \sigma^4}{D^d} \sum_{s_1 = n^*}^{(1/24) \delta n}
      \sum_{s_2 = n_2}^{(1/24) \delta n} \, \frac{1}{(s_1 + s_2)^2} \\
  &\ge \frac{c' \, \sigma^4}{D^d} \sum_{s_1 = n^*}^{(1/24) \delta n}
      \, \frac{1}{s_1} \\ 
  &\ge \frac{c' \, \sigma^4}{D^d} \log (\delta n), }
using in the last step that $\log (n^*) + \log (24) \le \frac{1}{2} \log (\delta n)$.
\end{proof}

\subsection{Upper bound on the second moment}

We fix $V_1, V_2, U_1, U_2$ such that $(5/6) \delta n \le h(U_1) = h_u = h(U_2) \le (11/12) \delta n$,
and recall that we condition on the event $\cIW'$.
Fix heights $h_u \le h_y, h_{\ty} \le \delta n$, and consider a pair of vertices 
$(Y_1, \tY_1)$ that are both in $\cT^B_1$ such that $h(Y_1) = h_y$ and $h(\tY_1) = h_{\ty}$.
There then exist unique heights $(9/12) \delta n \le r_1, \tr_1 \le h_u - n_1$ such that 
$Y_1 \succ W_1^{(r_1)}$ and $\tY_1 \succ W_1^{(\tr_1)}$. 
Let $\tZ_1$ denote the highest common ancestor of $Y_1$ and $\tY_1$, and 
let $k_1' = h(\tZ_1)$. Note that we can have $r_1 = \tr_1$, in which case
$k_1' \ge r_1 = \tr_1$, while if $r_1 \not= \tr_1$, we have $k_1' = r_1 \wedge \tr_1$.
Let us write $\cL'(h_y, h_{\ty}, k_1', r_1, \tr_1)$ for the number of pairs
$(Y_1, \tY_1)$ that satisfy the above height restrictions with given 
$h_y, h_{\ty}, k_1', r_1, \tr_1$. The following is an analogue of \cite[Lemma 3.11]{JN14}.

\begin{lemma}
\label{lem:L-exp}
We have
\eqnst
{ \E \Big[ \cL'(h_y, h_{\ty}, k'_1, r_1, \tr_1) \,\Big|\, \cIW' \Big]
  \le \begin{cases}
      \sigma^4 & \text{when $r_1 = \tr_1 < k_1' < h_y, h_{\ty}$;} \\
      \sigma^2 & \text{when $r_1 = \tr_1 < k_1' = h_y \wedge h_{\ty}$;} \\
      C_3      & \text{when $r_1 = \tr_1 = k_1' < h_y, h_{\ty}$;} \\
      \sigma^4 & \text{when $r_1 \not= \tr_1$, $k_1' = r_1 \wedge \tr_1 < h_y, h_{\ty}$;} \\
      \end{cases} }
\end{lemma}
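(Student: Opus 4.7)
The plan is to condition on $\cIW'(V_1,V_2,U_1,U_2)$ and invoke Lemma \ref{lem:distrib} to identify $\cT^B_1$ as a conditioned $\cT_{h_u - k_1 - 1,\, 2\delta n - k_1 - 1}$-tree, with backbone $Z_1^+ = W_1^{(k_1+1)} \to \cdots \to W_1^{(h_u)} = U_1$. Under this identification, the non-backbone children of each $W_1^{(r)}$ serve as roots of independent critical trees whose root offspring is $\tp$-distributed and whose subsequent generations are $p$-distributed, each conditioned to die out by time $2\delta n - r$. Because such die-out conditioning can only decrease expected populations, I will bound every count by its unconditional analogue, using the standard moment identities $\E[\tp] = \sigma^2$, $\E[X(X-1)] = \sigma^2$ for $X \sim p$, and $\sum_k k(k-1)(k-2)\, p(k) \le C_3$, together with the fact that an unconditioned critical GW tree has mean population $1$ at every generation.

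In Case 1, both $Y_1$ and $\tY_1$ lie in the hanging tree $\tau_{r_1}$ at $W_1^{(r_1)}$, with common ancestor $\tZ_1$ at relative height $k'_1 - r_1 \ge 1$ and $Y_1, \tY_1$ in descendant subtrees of two distinct children of $\tZ_1$. Summing first over $\tZ_1$ contributes a factor of at most $\sigma^2$ (the expected level-$(k'_1 - r_1)$ population of $\tau_{r_1}$); summing next over the two children of $\tZ_1$ contributes at most $\E[X(X-1)] = \sigma^2$; and the two remaining descendant counts at levels $h_y, h_{\ty}$ each contribute a factor of $1$. Multiplying gives $\sigma^4$. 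Case 2 is the degenerate version of Case 1 in which $\tZ_1 = Y_1$ (after reducing to $h_y \le h_{\ty}$): the factor from ``two distinct children of $\tZ_1$'' drops to $1$, and the bound becomes $\sigma^2$.

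For Case 3, $\tZ_1 = W_1^{(r_1)}$, so $Y_1$ and $\tY_1$ hang off two distinct non-backbone children of this backbone vertex. With $N \sim \tp$ the number of such children, the count is bounded by $\E[N(N-1)]$, and the direct calculation $\E[N(N-1)] = \sum_j j(j-1)(j+1) p(j+1) = \sum_k k(k-1)(k-2) p(k) \le C_3$ delivers the advertised bound. In Case 4, I may assume $r_1 < \tr_1$ so that $k'_1 = r_1$: now $Y_1$ hangs off a non-backbone child of $W_1^{(r_1)}$ while $\tY_1$ hangs off a non-backbone child of $W_1^{(\tr_1)}$, and these two hanging subtrees are disjoint and, given the backbone, independent. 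The count therefore factorises into two independent contributions, each bounded by $\E[N] \cdot 1 = \sigma^2$, for a total of $\sigma^4$.

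The main obstacle in making this rigorous is the careful bookkeeping around the conditioning $\cIW'$, which couples $\cT_1$ and $\cT_2$ through the random walk mappings at $U_1, U_2$; however, since the hanging subtrees carrying $Y_1, \tY_1$ lie sideways to the backbone $Z_1^+ \to U_1$, and the random walk mapping is independent of the tree it decorates, this coupling factorises and only the tree-conditioning handled by Lemma \ref{lem:distrib} remains relevant to the counts. A secondary subtlety worth flagging is that Case 3 is the unique case requiring a \emph{third} moment of $p$ (via the $\tp$-factorial moment $\E[N(N-1)]$), whereas every other case can be bounded using only the second moment $\sigma^2$.
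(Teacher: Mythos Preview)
Your proposal is correct and follows essentially the same approach as the paper: invoke Lemma~\ref{lem:distrib} to identify the conditional law of $\cT^B_1$ with that of $\cT_{h_u-k_1-1,\,2\delta n-k_1-1}$, then carry out the standard branching-process moment computations (the paper simply defers these to \cite[Lemma~3.11]{JN14}). Your explicit case analysis, including the identification of Case~3 as the only place requiring the third-moment bound $C_3$ via $\E[N(N-1)]=\sum_k k(k-1)(k-2)p(k)$, is exactly what those computations amount to.
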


(Observe that there is no case $r_1 = \tr_1 = k_1' = h_y \wedge h_{\ty}$, since
$r_1 = \tr_1 < h_u \le h_y, h_{\ty}$.)

\begin{proof}
Conditional on $\cIW'$ the distribution of $\cT^B_1$ is the same as that 
of $\cT_{h_u - k_1 - 1, 2 \delta n - k_1 -1}$ (cf.~Lemma \ref{lem:distrib}). Hence
the proof boils down to the same (straightforward) branching process calculations as 
the proof of \cite[Lemma 3.11]{JN14}. 
\end{proof}

The following `diagrammatic estimate' is taken without change from \cite{JN14}.
Recall the constant $n_1 = n_1(\p_1)$ from Lemma \ref{lem:LCLT}(i), and the 
constant $L_1 = L_1(\p_1)$ from \eqref{e:Green-ub}.
See Figure \ref{fig:f-diagramm}(a).

\begin{lemma}[{\cite[Lemma 3.12]{JN14}}]
\label{lem:f-bnd}
Suppose $d \ge 3$. There are constants $C = C(d) > 0$
and $C_2 = C_2(\p^1)$ such that for all $\tz_1, \tz_2 \in \Z^d$ we have
\eqnst
{ \sum_{h : k'_1 \vee k'_2 \le h \le \delta n} \p^{2h - k'_1 - k'_2}(\tz_1,\tz_2)
  \le \frac{C}{D^d} f(k'_1,k'_2,\tz_1,\tz_2) \, ,}
where
\eqnst
{ f(k'_1,k'_2,\tz_1,\tz_2)
  := \begin{cases}
     |k'_1 - k'_2|^{(2-d)/2}
        & \parbox{4.5cm}{if $\| \tz_1 - \tz_2 \| \le |k'_1 - k'_2|^{1/2}$
        and $|k'_1 - k'_2| \ge n_1$;} \\
     & \\
     C_2
        & \text{if $\| \tz_1 - \tz_2 \| \le |k'_1 - k'_2|^{1/2} < \ch{\sqrt{n_1}}$;} \\
     & \\
    \| \tz_1 - \tz_2 \|^{2-d}
        & \parbox{4.5cm}{if $\| \tz_1 - \tz_2 \| > |k'_1 - k'_2|^{1/2}$
        and $\| \tz_1 - \tz_2 \| \ge L_1$;} \\
     & \\
     C_2
        & \text{if $|k'_1 - k'_2|^{1/2} < \| \tz_1 - \tz_2 \| < L_1$.}
     \end{cases} }
\end{lemma}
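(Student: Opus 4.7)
The key observation is that the substitution $m = 2h - k'_1 - k'_2$ converts the sum on the left-hand side into $\sum_m \bp^m(\tz_1,\tz_2)$, where $m$ ranges over values from $|k'_1 - k'_2|$ up to $2\delta n - k'_1 - k'_2$ (with a fixed parity gap of $2$ that is absorbed into the constants). Since all of our target bounds are summable tails, the upper cutoff plays no role, and the real task is to bound
\eqnst{\sum_{m \ge |k'_1 - k'_2|} \bp^m(\tz_1,\tz_2).}
My plan is to split this sum at the natural threshold $m_* := \max\{|k'_1 - k'_2|,\, \|\tz_1 - \tz_2\|^2\}$ which separates the diffusive regime from the ballistic regime.

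In the diffusive regime $m \ge m_*$, we have $\|\tz_1 - \tz_2\| \le \sqrt{m}$, so Lemma \ref{lem:LCLT}(ii) applied with $L = 1$ gives
\eqnst{\bp^m(\tz_1,\tz_2) \le \frac{C}{D^d\, m^{d/2}} e^{-d \|\tz_1 - \tz_2\|^2/(2m)} \le \frac{C}{D^d\, m^{d/2}}.}
Summing this tail gives $\le C D^{-d} m_*^{(2-d)/2}$, which matches the Case~1 bound $|k'_1 - k'_2|^{(2-d)/2}$ (when $m_* = |k'_1 - k'_2| \ge n_1$) and contributes the $\|\tz_1 - \tz_2\|^{2-d}$ part of Case~3 (when $m_* = \|\tz_1 - \tz_2\|^2$). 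In the ballistic regime $m < m_*$, the range is non-empty only in Case~3, and there we bound
\eqnst{\sum_{|k'_1 - k'_2| \le m < \|\tz_1 - \tz_2\|^2} \bp^m(\tz_1,\tz_2) \le G(\tz_1 - \tz_2) \le \frac{C}{D^d} \|\tz_1 - \tz_2\|^{2-d},}
using \eqref{e:Green-ub}, which applies since $\|\tz_1 - \tz_2\| \ge L_1$ in Case~3.

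Cases~2 and~4 correspond to "small scale" parameters bounded in terms of $\bp^1$: either $|k'_1 - k'_2| < n_1$, or $\|\tz_1 - \tz_2\| < L_1$. For these I would split off an initial range of $m$ of length $\le \max\{n_1, L_1^2\}$ and use the crude bound $\bp^m(\tz_1,\tz_2) \le 1$ on each of those $O(1)$ terms, then apply Lemma \ref{lem:LCLT}(i) on the tail $m \ge n_1$ to obtain $\sum_m C D^{-d} m^{-d/2} \le C D^{-d}$. All resulting $\bp^1$-dependent factors are absorbed into the constant $C_2 = C_2(\bp^1)$.

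The main obstacle I anticipate is bookkeeping rather than ideas: one must verify that the thresholds $n_1$ and $L_1$ in the definition of $f$ line up with the regions where Lemma \ref{lem:LCLT}(ii) and \eqref{e:Green-ub} are actually usable, and also that on the boundaries of the four cases the bounds match up to the universal constant $C$. The Gaussian factor in Lemma \ref{lem:LCLT}(ii) turns out not to be needed, since the choice of $m_*$ has already quarantined the range where the walk might have to travel further than $\sqrt{m}$. No new tool beyond the local CLT and the Green function bound enters the argument.
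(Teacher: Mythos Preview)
The paper does not give its own proof of this lemma; it is quoted verbatim from \cite[Lemma~3.12]{JN14} and used as a black box. Your outline is the standard argument and is essentially correct.

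Two small technical points are worth tightening. First, for the diffusive tail you should invoke Lemma~\ref{lem:LCLT}(i) rather than (ii): part~(i) already gives the uniform bound $\bp^m(\tz_1,\tz_2)\le 2C D^{-d} m^{-d/2}$ for $m\ge n_1$, with threshold exactly the $n_1$ appearing in the definition of $f$, whereas part~(ii) has a different threshold $n_2(\bp^1,\eps,L)$ that need not coincide with $n_1$. Second, in Case~3 your splitting at $m_*=\|\tz_1-\tz_2\|^2$ is harmless but unnecessary, and in fact leaves a small gap when $L_1^2<n_1$ (since then the LCLT does not cover $m\in[m_*,n_1)$, and those finitely many terms cannot be absorbed into a constant depending only on $d$). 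The clean fix is to bound the entire sum in Case~3 by the full Green function $\sum_{m\ge 0}\bp^m(\tz_1,\tz_2)=G(\tz_2-\tz_1)$ and apply \eqref{e:Green-ub} directly; this uses only $\|\tz_1-\tz_2\|\ge L_1$ and yields $C(d)D^{-d}\|\tz_1-\tz_2\|^{2-d}$ in one step. With these adjustments the four cases go through exactly as you describe.
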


\psfrag{0}{$0$}
\psfrag{k1}{$k'_1$}
\psfrag{k2}{$k'_2$}
\psfrag{h}{$h$}
\psfrag{hu}{$h_y$}
\psfrag{hw}{$h_{\ty}$}
\psfrag{z1}{$\tz_1$}
\psfrag{z2}{$\tz_2$}
\psfrag{u}{$y$}
\psfrag{w}{$\ty$}
\psfrag{deltan}{$\delta n$}

\begin{figure}
\begin{center}
\includegraphics[scale=0.6]{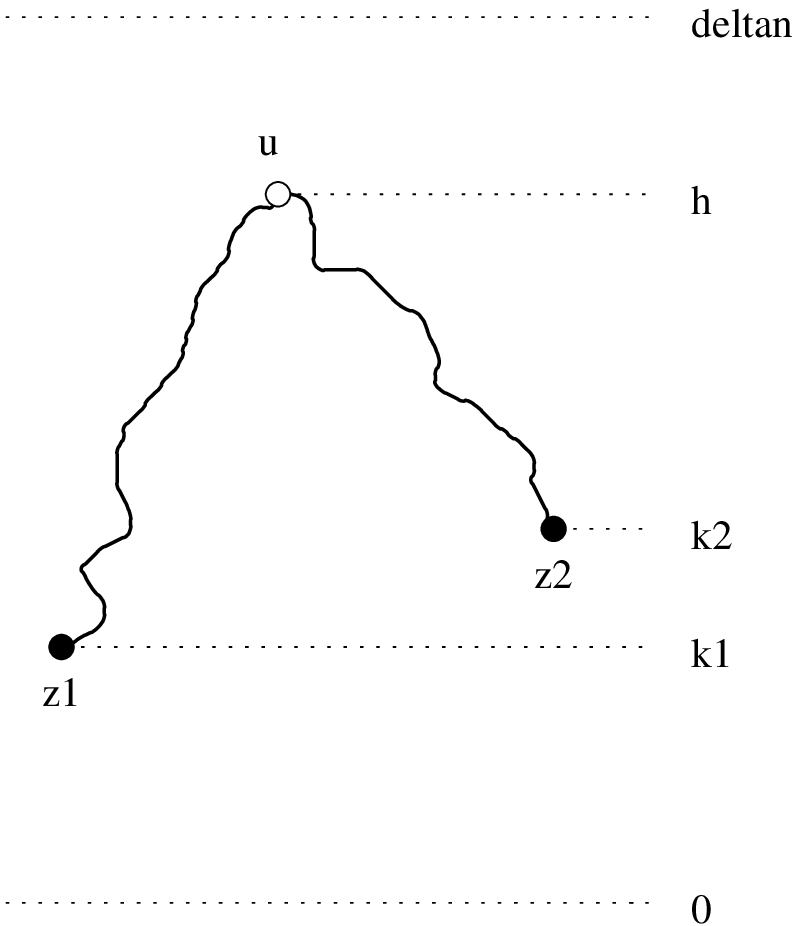}
\hskip1cm
\includegraphics[scale=0.6]{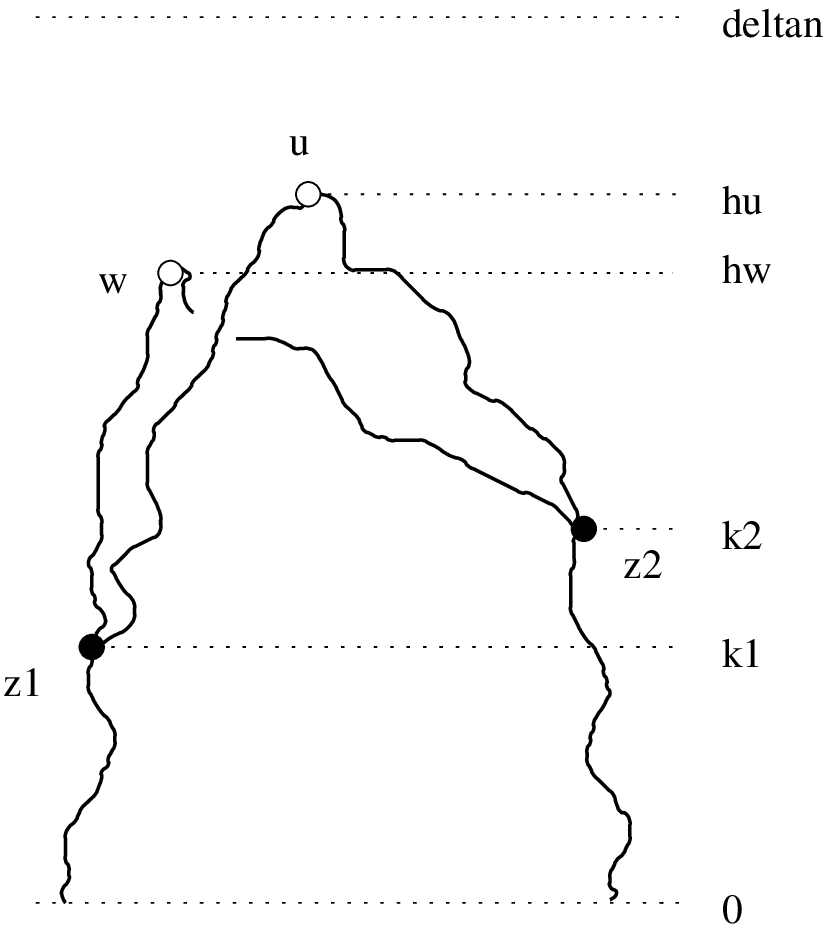}
\end{center}
\caption{(a) Illustration of the quantity bounded in Lemma \ref{lem:f-bnd}.
The curves represent random walk transition probabilities between
the indicated space-time points.
The expression is summed over $y$ to obtain $\p^{2h - k'_1 - k'_2}(\tz_1,\tz_2)$
and then summed over $h$. 
When $\| \tz_1 - \tz_2 \| > |k'_1 - k'_2|^{1/2}$, we get the
Green function decay from the spatial separation $\| \tz_1 - \tz_2 \|$.
When $\| \tz_1 - \tz_2 \| \le |k'_1 - k'_2|^{1/2}$ we get decay
from the time separation $|k'_1 - k'_2|$.
(b) Illustration of the quantity appearing in $q_{a,1}$ that contains ``two copies'' of $f$.
[Figure adapted by permission of Springer, from \emph{Commun.~Math.~Phys.} 
\textbf{331} (2014), 67--109 (Electrical resistance of the low-dimensional 
critical branching random walk. A.A.~J\'arai and A.~Nachmias) \CircledTop{c} (2014).]}
\label{fig:f-diagramm}
\end{figure}

\begin{proposition}
\label{prop:hI^2-bnd}
Assume $d = 6$. We have
\eqnst
{ \E \big( |\hI_{V,U}|^2 \,\big|\, \cIW' \big)
  \le \frac{C' \, \sigma^8}{D^{2d}} \log^2 (\delta n). }
\end{proposition}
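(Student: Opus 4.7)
The plan is to expand $|\hI_{V,U}|^2$ as a double sum over ordered pairs $(Y_1,Y_2),(\tY_1,\tY_2)$ of good extra intersections, and to decompose $\E[|\hI_{V,U}|^2 \mid \cIW']$ according to (a) the intersection heights $h_y, h_{\ty} \in [h_u,\delta n]$, (b) the branch heights $r_1,\tr_1,r_2,\tr_2$ at which $Y_j,\tY_j$ leave the paths $W_j^{(\cdot)}$, and (c) the heights $k_1', k_2'$ of the highest common ancestors $\tZ_1,\tZ_2$ of $(Y_1,\tY_1),(Y_2,\tY_2)$ in $\cT^B_1, \cT^B_2$. Lemma~\ref{lem:L-exp} controls the number of tree configurations realising each fixed choice, yielding at most $\sigma^4$ per tree in the generic case, so the tree contribution is at most $\sigma^8$ in the dominant regime.

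For the walk contribution in the dominant case $r_1 \ne \tr_1$, $r_2 \ne \tr_2$ (in which $\tZ_j = W_j^{(k_j')}$ with $k_j' = r_j \wedge \tr_j$ lies on the main path of $\cT^B_j$), I would condition on the tree structures and on $\cIW'$, and use the fact that once the main-path positions $w_j := \Phi_j(W_j^{(r_j)})$ and $\tw_j := \Phi_j(W_j^{(\tr_j)})$ are fixed, the two intersection events are conditionally independent (the walks in the off-path subtrees leading to $Y_j$ and $\tY_j$ are independent). Summing over the two intersection positions via Chapman--Kolmogorov reduces the joint intersection probability given $(w_j,\tw_j)_{j=1,2}$ to
\eqnst
{ \bp^{2h_y - r_1 - r_2}(w_1, w_2) \cdot \bp^{2h_{\ty} - \tr_1 - \tr_2}(\tw_1, \tw_2). }
Summing the two factors separately over $h_y,h_{\ty}$ via Lemma~\ref{lem:f-bnd} and averaging over the conditional distributions of the main-path branch points (using Lemma~\ref{lem:ts-lemma} together with the Markov property for the bridge walk to show that $\|w_j - u\|$ and $\|\tw_j - u\|$ are of order $\sqrt{s_j}$ and $\sqrt{\tilde{s}_j}$ respectively, with $s_j := h_u - r_j$ and $\tilde{s}_j := h_u - \tr_j$), yields an averaged upper bound of $C/(D^{12} (s_1+s_2)^2 (\tilde{s}_1 + \tilde{s}_2)^2)$.

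Combining with the tree factor $\sigma^8$ and summing over $s_1, s_2, \tilde{s}_1, \tilde{s}_2 \in [n^*, c\delta n]$ then gives
\eqnst
{ \frac{C \sigma^8}{D^{12}} \left[ \sum_{s_1, s_2} \frac{1}{(s_1 + s_2)^2} \right] \left[ \sum_{\tilde{s}_1, \tilde{s}_2} \frac{1}{(\tilde{s}_1 + \tilde{s}_2)^2} \right] \le \frac{C' \sigma^8}{D^{12}} \log^2(\delta n), }
where I use the estimate $\sum_{s_1, s_2} 1/(s_1+s_2)^2 \le C \log(\delta n)$ that already appeared in the first-moment lower bound. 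Heuristically, each intersection contributes one logarithm from its own ``close-to-$u$'' regime, and the two logs decouple because the pair $(w_1,w_2)$ is linked to $Y_1,Y_2$ through independent off-path walks, while $(\tw_1,\tw_2)$ is linked to $\tY_1,\tY_2$ analogously.

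The main obstacle will be the coincidence cases in Lemma~\ref{lem:L-exp}: when $r_j = \tr_j$, the common ancestor $\tZ_j$ may lie off the main path (at some height $k_j' > r_j$), the tree factor may reduce to $\sigma^2$ or $C_3$, and an additional sum over $k_j'$ enters together with a modified spatial factor coming from an off-path walk component in $\tz_j$. Verifying that each such case still contributes at most $O(\sigma^8 \log^2(\delta n)/D^{12})$ requires careful case-by-case bookkeeping. A secondary technical point is justifying the bound on $\E[f(r_1,r_2,w_1,w_2)\, f(\tr_1,\tr_2,\tw_1,\tw_2) \mid \cIW']$: the two $f$-factors are not independent because $w_j,\tw_j$ lie on the same bridge walk in tree $j$, but the correlation is weak enough (both are forced near $u$ on their respective scales) that the product factorises up to a constant.
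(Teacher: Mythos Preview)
Your overall decomposition matches the paper's, and the four-way split according to whether $r_j = \tr_j$ for $j=1,2$ is exactly what the paper uses. However, two substantive points deserve comment.

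First, for your ``dominant'' case $r_1 \ne \tr_1$, $r_2 \ne \tr_2$ (the paper's Case~(c)), the correlation concern you raise is genuine and your proposed resolution (``the product factorises up to a constant'') is only a heuristic. The paper sidesteps this entirely by \emph{not} applying Lemma~\ref{lem:f-bnd} here: instead it bounds $\bp^{2h_y-r_1-r_2}(w_1,w_2) \le C D^{-d} (h_y - r_1 \wedge r_2)^{-d/2}$ uniformly in $w_1, w_2$ via the local CLT, and likewise for the tilded factor. Since these bounds no longer depend on the bridge positions, the sum over the joint distribution $\tbp(w_1,\tw_1)\,\tbp(w_2,\tw_2)$ is trivially $1$, and the remaining sums over $h_y, h_{\ty}, r_j, \tr_j$ factorise cleanly into two $\log(\delta n)$ contributions. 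Your $f$-based route could in principle be pushed through, but the uniform LCLT bound is simpler and removes the issue altogether.

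Second, you treat the coincidence $r_1 = \tr_1$, $r_2 = \tr_2$ as something requiring only bookkeeping, but in the paper this (Case~(a)) is the longest step and also contributes the full $\log^2(\delta n)$, not a lower-order term. Here the common ancestors $\tZ_1, \tZ_2$ lie \emph{off} the main paths at heights $k_1' > r_1$, $k_2' > r_2$, introducing spatial summation variables $\tz_1, \tz_2$. The paper now \emph{does} use Lemma~\ref{lem:f-bnd}, producing $f(k_1',k_2',\tz_1,\tz_2)^2$, and then splits the $(\tz_1,\tz_2)$-sum into four sub-cases according to the sign of $\|\tz_1-\tz_2\| - |k_1'-k_2'|^{1/2}$ and of $k_2' - h_u$; in each sub-case a different transition probability is bounded via the local CLT before summing. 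The mixed case (one coincidence, one not) is then handled by Cauchy--Schwarz: writing $S_{b1} = \sum g_a\, g_c$ one gets $S_{b1} \le S_a^{1/2} S_c^{1/2}$. Your sketch does not anticipate either the sub-case analysis for~(a) or this Cauchy--Schwarz reduction for~(b).
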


\begin{proof}
The proof broadly follows the outline of the proof of \cite[Theorem 3.8]{JN14}.
In addition to the event $\cIW'$, let us further condition on
the spatial location $u$ of the intersection $(U_1, U_2) \in \cI'$,
as well as the spatial locations $z_1^+$ and $z_2^+$.
Let $Y_1, \tY_1 \in \cT^B_1$ and $Y_2, \tY_2 \in \cT^B_2$ be pairs of
tree vertices, such that $h(Y_1) = h_y = h(Y_2)$ and $h(\tY_1) = h_{\ty} = h(\tY_2)$.
Recall the notation introduced at the beginning of this section, and 
extend it to the tree $\cT^B_2$; e.g.~$\tZ_2$ is the highest common ancestor of 
$Y_2, \tY_2$ at height $k_2'$, etc.

We give separate bounds in the following four cases.\\
\emph{Case (a).} $r_1 = \tr_1 < k_1'$ and $r_2 = \tr_2 < k_2'$;\\
\emph{Case (b1).} $r_1 = \tr_1 < k_1'$ and $r_2 \wedge \tr_2 = k_2'$;\\ 
\emph{Case (b2).} $r_1 \wedge \tr_1 = k_1'$ and $r_2 = \tr_2 < k_2'$;\\
\emph{Case (c).} $r_1 \wedge \tr_1 = k_1'$ and $r_2 \wedge \tr_2 = k_2'$.\\
We then have
\eqnspl{e:four-terms}
{ &\E \big( |\hI_{V,U}|^2 \,\big|\, \cIW',\, \Phi(U_1) = (u,h_u) = \Phi(U_2),\, 
    \Phi(Z_1^+) = (z_1^+, k_1+1),\, \Phi(Z_2^+) = (z_2^+, k_2+1) \big) \\
  &\qquad = S_a + S_{b1} + S_{b2} + S_c, }
where the four terms represent contributions from intersecting pairs
satisfying the criteria of the respective cases. Then the proposition
follows from the three lemmas below.
\end{proof}

\begin{lemma}
\label{lem:S_a}
We have 
\eqnst
{ S_a 
  \le \frac{C' \, \sigma^8}{D^{2d}} \, \log^2 (\delta n). }
\end{lemma}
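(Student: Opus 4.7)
The plan is to decompose $S_a$ according to the tree heights $r_1 = \tr_1$, $r_2 = \tr_2$, $k_1'$, $k_2'$, $h_y$, $h_{\ty}$ and the intermediate spatial locations, and then bound the contributions using Lemma \ref{lem:L-exp} for the tree part and Lemma \ref{lem:f-bnd} (applied twice) for the spatial part.

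First, by Lemma \ref{lem:L-exp} in the Case (a) regime $r_j = \tr_j < k_j' < h_y, h_{\ty}$, the conditional expectation of $\cL'$ on each tree is at most $\sigma^4$, contributing a total tree factor of $\sigma^8$. Conditional on $\cIW'$ and on the spatial locations $u$, $z_1^+$, $z_2^+$, the random walk mapping produces independent walks: the bridge from $z_j^+$ to $u$ passing through $w_j^{(r_j)}$ at time $r_j$, an unconditioned walk of length $k_j' - r_j$ from $w_j^{(r_j)}$ to the common ancestor $\tZ_j$, and two further independent walks of lengths $h_y - k_j'$ and $h_{\ty} - k_j'$ emanating from $\tZ_j$ through its two distinct children toward $Y_j$ and $\tY_j$, respectively, subject to the intersection constraints $\Phi_1(Y_1) = \Phi_2(Y_2)$ and $\Phi_1(\tY_1) = \Phi_2(\tY_2)$.

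Next, I would sum over the intersection point $y$ using $\sum_y \bp^{h_y - k_1'}(\tz_1,y)\, \bp^{h_y - k_2'}(\tz_2,y) = \bp^{2 h_y - k_1' - k_2'}(\tz_1,\tz_2)$ (symmetry of the walk), and then over $h_y \in [k_1' \vee k_2', \delta n]$ via Lemma \ref{lem:f-bnd} to obtain $(C/D^d)\, f(k_1', k_2', \tz_1, \tz_2)$. Performing the identical manipulation on the pair $(\tY_1, \tY_2)$ yields a second factor $(C/D^d)\, f(k_1', k_2', \tz_1, \tz_2)$; this is the ``two copies of $f$'' pictured in Figure \ref{fig:f-diagramm}(b). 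It then remains to bound $\sum_{\tz_1, \tz_2} \bp^{k_1' - r_1}(w_1,\tz_1)\, \bp^{k_2' - r_2}(w_2,\tz_2)\, f(k_1', k_2', \tz_1, \tz_2)^2$, to integrate over $w_1, w_2$ against the bound on their conditional density (obtained from the local CLT as in the proof of Lemma \ref{lem:ts-lemma}), and finally to sum over $r_1, r_2, k_1', k_2'$.

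The main obstacle is this final spatial summation. In $d = 6$ the function $f$ decays like $[|k_1' - k_2'| \vee \|\tz_1 - \tz_2\|^2]^{-2}$, so $f^2$ sits at the borderline of integrability against convolutions of Gaussian propagators, and a dyadic decomposition according to the relative sizes of $k_j' - r_j$ and $|k_1' - k_2'|$ is required. After that step, each of the two remaining height sums over $r_j \in [(9/12)\delta n,\, h_u - n^*]$ with $r_j < k_j' < h_u$ will contribute a single factor of $\log(\delta n)$, yielding the target bound $C' \sigma^8 D^{-2d} \log^2(\delta n)$.
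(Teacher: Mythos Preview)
Your framework matches the paper's: decompose by the heights $r_1,r_2,k_1',k_2',h_y,h_{\ty}$, use Lemma~\ref{lem:L-exp} for the tree factor $\sigma^8$, collapse the two intersection points via symmetry and apply Lemma~\ref{lem:f-bnd} twice to reduce to the ``$f^2$'' diagram of Figure~\ref{fig:f-diagramm}(b). The gap is in the last paragraph, where the spatial summation is handled.

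First, a bookkeeping point: the range of $k_j'$ is $r_j < k_j' \le \delta n$, not $k_j' < h_u$; the common ancestor $\tZ_j$ can lie above $h_u$. This is not cosmetic --- the split $k_2' \ge h_u$ versus $k_2' < h_u$ is exactly what drives the paper's proof. Second, and more seriously, your plan to ``integrate over $w_1,w_2$ against the bound on their conditional density'' does not work as stated. If you bound both $\tbp(w_1)$ and $\tbp(w_2)$ pointwise by the local CLT and then sum $\sum_{w_1,w_2} \bp^{k_1'-r_1}(w_1,\tz_1)\bp^{k_2'-r_2}(w_2,\tz_2)$, the remaining sum $\sum_{\tz_1,\tz_2} f^2$ diverges (since $f$ depends only on $\tz_1-\tz_2$). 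The correct move is to bound \emph{exactly one} propagator pointwise and keep the others as probability measures summing to~$1$. Which one you bound depends on the case: if $k_2' \ge h_u$ (so $k_2'-r_2 \ge n_1$) bound $\bp^{k_2'-r_2}(w_2,\tz_2) \le C D^{-d}(k_2'-r_2)^{-3}$; if $r_2 < k_2' < h_u$ bound instead $\tbp(w_2) \le C D^{-d}(h_u-r_2)^{-3}$ and keep $\bp^{k_2'-r_2}$ as a measure in $w_2$. In both cases the $\tz_2$-sum against $f^2$ gives $|k_1'-k_2'|^{-1}$ (splitting on $\|\tz_1-\tz_2\| \lessgtr |k_1'-k_2'|^{1/2}$), and one then obtains $q_{a,1} \le C|k_1'-k_2'|^{-1}(k_2'-r_2)^{-3}$, respectively $C|k_1'-k_2'|^{-1}(h_u-r_2)^{-3}$. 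The two logarithms then do \emph{not} come symmetrically from $r_1$ and $r_2$ as you suggest: one comes from $\sum_{k_1'} |k_1'-k_2'|^{-1}$, and the other from the combined sum over $k_2',r_2,r_1$, which collapses to $\sum_{r_1}(h_u-r_1)^{-1}$. Your proposed ``dyadic decomposition according to the relative sizes of $k_j'-r_j$ and $|k_1'-k_2'|$'' is not the relevant one.

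A minor omission: you only treat the case $k_j' < h_y \wedge h_{\ty}$ of Lemma~\ref{lem:L-exp}. The boundary cases $k_j' = h_y \wedge h_{\ty}$ produce two further (lower-order) terms $S_{a,2}$, $S_{a,3}$, which the paper bounds separately by $O(\sigma^6 D^{-2d}\log(\delta n))$ and $O(\sigma^4 D^{-2d}\log(\delta n))$.
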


\begin{lemma}
\label{lem:S_b}
We have 
\eqnst
{ S_{b1} + S_{b2}  
  \le \frac{C' \, \sigma^8}{D^{2d}} \, \log^2 (\delta n). }
\end{lemma}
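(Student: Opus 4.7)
By the symmetry between sides 1 and 2, it suffices to bound $S_{b1}$; the estimate on $S_{b2}$ is identical. In case (b1) we have $r_1 = \tr_1 < k_1'$ (so the pair $(Y_1,\tY_1)$ branches off the main path at a common vertex $W_1^{(r_1)}$ and then splits internally at $\tZ_1$ of height $k_1'$), while $r_2 \ne \tr_2$ and $k_2' = r_2 \wedge \tr_2$ (so the pair $(Y_2,\tY_2)$ branches off at two \emph{distinct} vertices on the path from $Z_2^+$ to $U_2$). Without loss of generality assume $r_2 < \tr_2$.

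My plan is to adapt the proof of Lemma \ref{lem:S_a}, exploiting the fact that case (b1) still has exactly two meetings (at heights $h_y$ and $h_{\ty}$), so the final diagrammatic bound will involve two copies of the function $f$ from Lemma \ref{lem:f-bnd}, just as in case (a), but arranged in a different internal topology. First, I use Lemma \ref{lem:distrib} to reduce the conditional distribution of the two sub-trees $\cT^B_1$ and $\cT^B_2$ to that of conditioned branching trees. Then I apply Lemma \ref{lem:L-exp} separately to each side: side 1 contributes at most $\sigma^4$ in the generic subcase $r_1 < k_1' < h_y \wedge h_{\ty}$, with smaller contributions ($\sigma^2$ or $C_3$) in the boundary subcases; side 2 contributes $\sigma^4$ throughout since $r_2 \ne \tr_2$. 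Combining gives a generic counting factor $\sigma^8$.

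Next, using the local CLT (Lemma \ref{lem:LCLT}) as in the first-moment computation, I bound the probability of observing the specified backbone and branching configurations by the product of the corresponding free random walk transition probabilities, with the conditioning-induced normalizations absorbed into universal constants. Summing over $y \in \Z^d$ at each fixed height $h_y$ yields $\p^{2 h_y - k_1' - r_2}(\tz_1, w_2^{(r_2)})$, which, summed over $h_y \in [h_u, \delta n]$, is bounded via Lemma \ref{lem:f-bnd} by $C D^{-d} f(k_1', r_2, \tz_1, w_2^{(r_2)})$. Applying the same procedure to the pair $(\tY_1,\tY_2)$ yields a factor $C D^{-d} f(k_1', \tr_2, \tz_1, w_2^{(\tr_2)})$. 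After then summing the remaining spatial variables $\tz_1, w_1^{(r_1)}, w_2^{(r_2)}, w_2^{(\tr_2)}$ using translation invariance of the random walk and the explicit form of $f$ in $d=6$, the leftover height sums over $k_1' \in [r_1, h_y \wedge h_{\ty}]$ and over $(r_2, \tr_2) \in [(9/12)\delta n, h_u-n^*]^2$ each produce a single factor of $\log(\delta n)$, giving the desired bound $\frac{C'\sigma^8}{D^{2d}} \log^2(\delta n)$.

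I expect the main obstacle to be the diagrammatic bookkeeping in the non-generic subcases ($k_1' = h_y \wedge h_{\ty}$ or $k_1' = r_1$), where one must verify that the reduction in the side-1 counting factor is compensated by a saving in one of the $f$-factors, so that each such subcase contributes no more than the generic one. This is precisely the type of case analysis carried out in the corresponding diagrammatic estimates of \cite{JN14}, and the bounds there transfer to the present setting with only notational changes; the new feature in $d=6$ is the appearance of the logarithmic factors from Lemma \ref{lem:f-bnd}, which are accounted for directly by the two height summations identified above.
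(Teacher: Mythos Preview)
Your direct diagrammatic route is different from the paper's, and your sketch has a real gap. The paper does \emph{not} carry out a fresh diagram estimate for case (b1). Instead it observes that in $S_{b1}$ the side-1 contribution has exactly the case-(a) internal structure while the side-2 contribution has exactly the case-(c) structure, so one can write
\[
S_{b1} \;=\; \sum_{y,\ty \in \Z^d}\ \sum_{h_y,h_{\ty}=h_u}^{\delta n} g_a(y,\ty,h_y,h_{\ty})\, g_c(y,\ty,h_y,h_{\ty}),
\]
and a single application of Cauchy--Schwarz gives $S_{b1}\le S_a^{1/2} S_c^{1/2}$, which is $\le C'\sigma^8 D^{-2d}\log^2(\delta n)$ by Lemmas \ref{lem:S_a} and \ref{lem:S_c}. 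This completely bypasses any new case analysis.

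The gap in your sketch is the sum over $r_1$. In case (b1) the free height variables are $r_1,\,k_1',\,r_2,\,\tr_2$ (four, not three), together with $h_y,h_{\ty}$ which you correctly absorb into the two $f$-factors. You assert that the sums over $k_1'$ and over $(r_2,\tr_2)$ each produce one $\log(\delta n)$, but you never say what happens to the $r_1$-sum. In the $S_a$ proof the $r_1$-sum is controlled via the symmetry reduction $r_2\le r_1$: the factor $(k_2'-r_2)^{-3}$ obtained from LCLT on side~2, after summing over $k_2'$ and then over $r_2\le r_1$, leaves $(h_u-r_1)^{-1}$, and only then does $\sum_{r_1}$ give a $\log$. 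In case (b1) there is no symmetry between sides~1 and~2, so that reduction is unavailable; your two $f$-factors involve $(k_1',r_2)$ and $(k_1',\tr_2)$ and carry no $r_1$-dependence, and after summing $\sum_{w_1,\tz_1}\tbp(w_1)\,\bp^{k_1'-r_1}(w_1,\tz_1)[\cdots]$ the remaining $r_1$-sum threatens to contribute an extra factor $\asymp\delta n$. A direct proof along your lines can presumably be pushed through, but it would require a further case split on the relative order of $r_1$ versus $r_2,\tr_2$ (and of $k_1'$ versus $h_u$), which your sketch does not supply. The Cauchy--Schwarz argument avoids all of this.
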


\begin{lemma}
\label{lem:S_c}
We have 
\eqnst
{ S_c 
  \le \frac{C' \, \sigma^8}{D^{2d}} \, \log^2 (\delta n). }
\end{lemma}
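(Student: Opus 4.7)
The plan is to exploit the defining feature of Case (c): within each tree $\cT^B_j$, the highest common ancestor $\tZ_j$ of $Y_j$ and $\tY_j$ lies on the backbone segment from $Z_j^+$ to $U_j$, at height $k_j'$. Consequently, conditional on the backbones passing through $\tZ_j$ at spatial location $\tz_j$, the branches from $\tZ_j$ to $Y_j$ and to $\tY_j$ evolve independently. Hence the two intersection events $\Phi_1(Y_1)=\Phi_2(Y_2)$ and $\Phi_1(\tY_1)=\Phi_2(\tY_2)$ decouple, and the problem reduces to applying the first-moment-type diagrammatic bound twice, once per intersection.

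Concretely, I would decompose $S_c$ as a sum over $k_1', k_2' \in [(9/12)\delta n,\, h_u - n^*]$, over $h_y, h_{\ty} \in [h_u,\, \delta n]$, over the branching heights $r_j, \tr_j$ with $r_j \wedge \tr_j = k_j'$, and over the spatial points $\tz_1, \tz_2, y, \ty$. Lemma \ref{lem:L-exp} bounds the expected number of tree-topologies in each tree by a constant multiple of $\sigma^4$ (with a $C_3$ factor replacing one $\sigma^2$ in the degenerate subcase $r_j = \tr_j = k_j'$, which contributes comparably after summation), giving a tree contribution of $O(\sigma^8)$. For the spatial side, conditional on $\cIW'$ the backbone of tree $j$ contributes the ratio $\p^{k_j' - k_j - 1}(z_j^+, \tz_j)\, \p^{h_u - k_j'}(\tz_j, u) / \p^{h_u - k_j - 1}(z_j^+, u)$. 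Summing the intersection at height $h_y$ first over $y$ via Chapman-Kolmogorov yields $\p^{2h_y - k_1' - k_2'}(\tz_1, \tz_2)$, and then summing over $h_y$ via Lemma \ref{lem:f-bnd} produces a factor $(C/D^d)\, f(k_1', k_2', \tz_1, \tz_2)$. An identical computation at height $h_{\ty}$ gives a second factor of $f$, so the spatial contribution for fixed $k_j', \tz_j$ is bounded by $(C/D^{2d})\, f(k_1', k_2', \tz_1, \tz_2)^2$ times the backbone ratios.

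The remaining sum over $\tz_1, \tz_2$ and $k_1', k_2'$ uses the local CLT: the denominators $\p^{h_u - k_j - 1}(z_j^+, u)$ are $\asymp (D^d (\delta n)^{d/2})^{-1}$, while the factors $\p^{h_u - k_j'}(\tz_j, u)$ localize $\tz_j$ to a ball of radius $\sqrt{h_u - k_j'}$ about $u$. Diagrammatic bookkeeping parallel to \cite[Section 3]{JN14} then reduces the resulting height sum to roughly $\sum_{s_1, \ts_1 \ge n^*} \sum_{s_2, \ts_2 \ge n^*} (s_1 + s_2)^{-2} (\ts_1 + \ts_2)^{-2} \le C \log^2(\delta n)$, with $s_j = h_u - k_j'$ and $\ts_j = h_u - \tr_j$; the two factors of $\log(\delta n)$ correspond to the two essentially independent intersections, exactly as the first-moment sum produced a single $\log(\delta n)$. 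The main technical obstacle is controlling the square $f^2$: in $d = 6$ a single factor of $f$ captures the critical Green's-function decay, so $f^2$ is only barely summable, and the cancellation between the spatial volume near $u$ (contributed by the backbone through $\tz_j$) and the decay built into $f$ must be tracked carefully to avoid a spurious extra logarithmic or polynomial factor --- precisely the kind of borderline bookkeeping that is the source of the logarithmic correction in Theorem \ref{thm:main}.
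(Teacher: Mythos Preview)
Your setup misidentifies the diagrammatic structure of Case~(c). Recall that $k_j' = h(\tZ_j)$ where $\tZ_j$ is the highest common ancestor of $Y_j$ and $\tY_j$; in the generic subcase $r_j \neq \tr_j$ one has $k_j' = r_j \wedge \tr_j$, so only \emph{one} of $Y_j,\tY_j$ branches off the backbone at $\tZ_j$, while the other branches at the strictly higher backbone vertex $W_j^{(r_j \vee \tr_j)}$. Hence after summing over the intersection point $y$, Chapman--Kolmogorov does \emph{not} produce $\bp^{2h_y - k_1' - k_2'}(\tz_1,\tz_2)$: the correct factor is $\bp^{2h_y - r_1 - r_2}(w_1,w_2)$, where $w_j$ is the spatial location of $W_j^{(r_j)}$, and there is no reason that both $r_1 = k_1'$ and $r_2 = k_2'$ simultaneously. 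The bound $f(k_1',k_2',\tz_1,\tz_2)^2$ you write is precisely the Case~(a) structure (common ancestor \emph{off} the backbone, both branches emanating freely from $\tZ_j$), and is inconsistent with your own final sum, which already uses the distinct variables $s_j = h_u - k_j'$ and $\ts_j = h_u - \tr_j$.

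The paper's argument is different and in fact simpler here: it does not use $f$ at all in Case~(c). One parametrizes directly by the four backbone branching heights $r_1,\tr_1,r_2,\tr_2$ and the joint laws $\tbp(w_1,\tw_1)$, $\tbp(w_2,\tw_2)$ of the corresponding backbone locations. Because the two intersecting pairs emanate from distinct backbone vertices, the spatial part factors as $\bp^{2h_y - r_1 - r_2}(w_1,w_2)\,\bp^{2h_{\ty} - \tr_1 - \tr_2}(\tw_1,\tw_2)$. A direct local CLT bound on each factor, followed by summing out $w_1,w_2,\tw_1,\tw_2$, yields a clean product of two first-moment-type height sums $\sum_{r_2 \le r_1}\sum_{h_y \ge h_u}(h_y - r_2)^{-3}$ and its tilde analogue, each of size $O(\log \delta n)$. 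The $f^2$ bookkeeping you anticipate as the ``main technical obstacle'' belongs to Lemma~\ref{lem:S_a}, not here.
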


\begin{proof}[Proof of Lemma \ref{lem:S_a}.] 
By symmetry, we can restrict to $r_1 \ge r_2$. We then have
\eqnsplst
{ S_a &\le 2 \, \sum_{(9/12) \delta n \le r_2 \le r_1 \le h_u-n_1} \,\, 
      \sum_{\substack{r_1 < k_1' \le \delta n \\ 
            r_2 < k_2' \le \delta n}} \,\,
      \sum_{h_y, h_{\ty} = k'_1 \vee k'_2 \vee h_u}^{\delta n} \,
      \E \cL'(h_y, h_{\ty}, r_1, r_1, k'_1) \, \E \cL'(h_y, h_{\ty}, r_2, r_2, k'_2) \\
      &\qquad \times p_a(h_y, h_{\ty}, k'_1, k'_2, r_1, r_2), }
where $p_a$ is the probability, given the conditioning in \eqref{e:four-terms}
that $\Phi(Y_1) = \Phi(Y_2)$ and $\Phi(\tY_1) = \Phi(\tY_2)$. This indeed only
depends on the heights in the argument of $p_a$, and can be written as follows.
Write $\tbp(w_1),\, \tbp(w_2)$ for the conditional distributions of the spatial 
locations $w_1, w_2$, respectively, we have:
\eqnsplst
{ \tbp(w_1)
  &= \frac{\bp^{r_1 - k_1 - 1}(z_1^+, w_1) \, \bp^{h_u - r_1}(w_1,u)}{\bp^{h_u-k_1-1}(z_1^+, u)} 
   \quad w_1 \in \Z^d; \\
  \tbp(w_2)
  &= \frac{\bp^{r_2 - k_2 - 1}(z_2^+, w_2) \, \bp^{h_u - r_2}(w_2,u)}{\bp^{h_u-k_2-1}(z_2^+, u)} 
   \quad w_2 \in \Z^d. }
Then we have
\eqnsplst
{ &p_a(h_y, h_{\ty}, k'_1, k'_2, r_1, r_2)
  = \sum_{w_1, w_2 \in \Z^d} \, 
    \sum_{\tz_1, \tz_2 \in \Z^d} \, \sum_{y, \ty \in \Z^d} \,
    \tbp(w_1) \, \tbp(w_2) \, \bp^{k_1'-r_1}(w_1, \tz_1) \, \bp^{k_2'-r_2}(w_2, \tz_2) \\
  &\qquad\qquad \times \bp^{h_y-k_1'}(\tz_1, y) \, \bp^{h_y-k_2'}(\tz_2, y) \, 
    \bp^{h_{\ty}-k_1'}(\tz_1, \ty) \, \bp^{h_{\ty}-k_2'}(\tz_2, \ty) \\
  &\qquad = \sum_{w_1, w_2 \in \Z^d} \, \sum_{\tz_1, \tz_2 \in \Z^d} \,
    \tbp(w_1) \, \tbp(w_2) \, \bp^{k_1'-r_1}(w_1, \tz_1) \, \bp^{k_2'-r_2}(w_2, \tz_2) \\
  &\qquad\qquad \times \bp^{2h_y - k_1' - k_2'}(\tz_1, \tz_2) \, \bp^{2h_{\ty} - k_1' - k_2'}(\tz_1, \tz_2). }
We perform the summation over $h_y, h_{\ty}$ using Lemmas \ref{lem:L-exp} and \ref{lem:f-bnd}.
Restricting the sum to $h_y, h_{\ty} > k_1' \vee k_2'$ we get an upper bound of the form:
\eqnsplst
{ S_{a,1}
  = \frac{C \, \sigma^8}{D^{2d}} \, \sum_{(9/12) \delta n \le r_2 \le r_1 \le h_u-n_1} \,\, 
      \sum_{\substack{r_1 < k_1' \le \delta n \\ 
            r_2 < k_2' \le \delta n}} \,\, q_{a,1}(k_1', k_2', r_1, r_2), }
where 
\eqnsplst
{ q_{a,1}(k_1', k_2', r_1, r_2)
  &= \sum_{w_1, w_2 \in \Z^d} \, \sum_{\tz_1, \tz_2 \in \Z^d} \,
     \tbp(w_1) \, \tbp(w_2) \, \bp^{k_1'-r_1}(w_1, \tz_1) \, \bp^{k_2'-r_2}(w_2, \tz_2) \,
     f(k_1', k_2', \tz_1, \tz_2)^2. }
Similarly, summing over $h_y > k_1' \vee k_2'$ and $h_{\ty} = k_1' \vee k_2'$, and when
the roles of $y$ and $\ty$ are interchanged, yields the upper bound:
\eqnsplst
{ S_{a,2}
  = \frac{C \, \sigma^6}{D^d} \, \sum_{(9/12) \delta n \le r_2 \le r_1 \le h_u-n_1} \,\, 
      \sum_{\substack{r_1 < k_1' \le \delta n \\ 
            r_2 < k_2' \le \delta n}} \,\, q_{a,2}(k_1', k_2', r_1, r_2), }
where 
\eqnsplst
{ q_{a,2}(k_1', k_2', r_1, r_2)
  &= \sum_{w_1, w_2 \in \Z^d} \, \sum_{\tz_1, \tz_2 \in \Z^d} \,
     \tbp(w_1) \, \tbp(w_2) \, \bp^{k_1'- r_1}(w_1, \tz_1) \, \bp^{k_2'-r_2}(w_2, \tz_2) \\
  &\qquad \times \bp^{|k_1'-k_2'|}(\tz_1, \tz_2) \, f(k_1', k_2', \tz_1, \tz_2). }
And finally, our bound when $h_y = h_{\ty} = k_1' \vee k_2'$ is 
\eqnsplst
{ S_{a,3}
  = C \, \sigma^4 \, \sum_{(9/12) \delta n \le r_2 \le r_1 \le h_u-n_1} \,\, 
      \sum_{\substack{r_1 < k_1' \le \delta n \\ 
            r_2 < k_2' \le \delta n}} \,\, q_{a,3}(k_1', k_2', r_1, r_2), }
where 
\eqnsplst
{ q_{a,3}(k_1', k_2', r_1, r_2)
  &= \sum_{w_1, w_2 \in \Z^d} \, \sum_{\tz_1, \tz_2 \in \Z^d} \,
     \bp^{k_1'-r_1}(w_1, \tz_1) \, \bp^{k_2'-r_2}(w_2, \tz_2) \, 
     \bp^{|k_1'-k_2'|}(\tz_1, \tz_2) \, \bp^{|k_1'-k_2'|}(\tz_1, \tz_2). }
This altogether gives an upper bound of the form:
\eqnst
{ S_a
  \le S_{a,1} + S_{a,2} + S_{a,3}, }
and hence we bound each of the three terms separately.

We start with bounding $S_{a,1}$, and we split the summation over 
$\tz_1, \tz_2$ into:\\
(I)+ $\| \tz_2 - \tz_1 \| \le |k_1' - k_2'|^{1/2}$ and $k_2' \ge h_u$;\\
(I)-- $\| \tz_2 - \tz_1 \| \le |k_1' - k_2'|^{1/2}$ and $r_2 < k_2' < h_u$;\\
(II)+ $\| \tz_2 - \tz_1 \| > |k_1' - k_2'|^{1/2}$ and $k_2' \ge h_u$;\\
(II)-- $\| \tz_2 - \tz_1 \| > |k_1' - k_2'|^{1/2}$ and $r_2 < k_2' < h_u$.\\
We start with (I)+, and initially restrict to $|k_1' - k_2'| \ge n_1$. Then 
using Lemma \ref{lem:f-bnd} and the local limit theorem (Lemma \ref{lem:LCLT}) 
to bound $\bp^{k_2' - r_2}(w_2, \tz_2)$, we have
\eqnsplst
{ q_{a,1,(I)+}(k_1', k_2', r_1, r_2)
  &\le \frac{C}{D^d} \, |k_1' - k_2'|^{2-d} \, (k_2' - r_2)^{-d/2} \, \sum_{w_1, w_2 \in \Z^d} \, 
     \sum_{\tz_1 \in \Z^d} \,\, \sum_{\tz_2 : \| \tz_2 - \tz_1 \| \le |k_1' - k_2'|^{1/2}} \\
  &\qquad \tbp(w_1) \, \tbp(w_2) \, \bp^{k_1'-r_1}(w_1, \tz_1) \\
  &\le C \, |k_1' - k_2'|^{2-d/2} \, (k_2' - r_2)^{-d/2} \, \sum_{w_1 \in \Z^d} \, 
     \sum_{\tz_1 \in \Z^d} \tbp(w_1) \, \bp^{k_1'-r_1}(w_1, \tz_1) \\
  &\le C \, |k_1' - k_2'|^{2-d/2} \, (k_2' - r_2)^{-d/2} \\
  &= C |k_1' - k_2'|^{-1} \, (k_2' - r_2)^{-3}. }
Performing the sum over $k_1'$ yields a factor $\log (\delta n)$, while
performing the sums over $k_2'$, $r_2$ and $r_1$ yield:
\eqnst
{ \sum_{(9/12) \delta n \le r_1 \le h_u - n_1} \, \sum_{r_2 \le r_1} \, \sum_{k_2' \ge h_u}
     (k_2' - r_2)^{-3}
  \le C \, \log (\delta n). }
The contribution to (I)+ from $|k_1' - k_2'| < n_1$ is of lower order, since:
\eqnsplst
{ q_{a,1,(I)+} 
  &\le \frac{C \, C_2^2}{D^d} \, (k_2' - r_2)^{-d/2} \, \sum_{w_1, w_2 \in \Z^d} \, 
     \sum_{\tz_1 \in \Z^d} \,\, \sum_{\tz_2 : \| \tz_2 - \tz_1 \| \le |k_1' - k_2'|^{1/2}} \,\,
     \tbp(w_1) \, \tbp(w_2) \, \bp^{k_1'-r_1}(w_1, \tz_1) \\
  &\le C \, C_2^2 \, |k_1' - k_2'|^{3} \, (k_2' - r_2)^{-3}. }
Summing over $k_1'$ yields a factor $n_1^4$, while the rest of the sums contribute
a factor $\log (\delta n)$.

In case (I)-- we again initially restrict to $|k_1' - k_2'| \ge n_1$. We use Lemma \ref{lem:f-bnd}
and we bound $\tbp(w_2)$ using the local limit theorem to get:
\eqnsplst
{ q_{a,1,(I)-}(k_1', k_2', r_1, r_2)
  &\le \frac{C}{D^d} \, |k_1' - k_2'|^{2-d} \, (h_u - r_2)^{-d/2} \, \sum_{w_1, w_2 \in \Z^d} \, 
     \sum_{\tz_1 \in \Z^d} \,\, \sum_{\tz_2 : \| \tz_2 - \tz_1 \| \le |k_1' - k_2'|^{1/2}} \\
  &\qquad \tbp(w_1) \, \bp^{k_1'-r_1}(w_1, \tz_1) \bp^{k_2'-r_2}(w_2, \tz_2) \\
  &\le C \, |k_1' - k_2'|^{2-d/2} \, (h_u - r_2)^{-d/2} \, \sum_{w_1 \in \Z^d} \, 
     \sum_{\tz_1 \in \Z^d} \tbp(w_1) \, \bp^{k_1'-r_1}(w_1, \tz_1) \\
  &\le C \, |k_1' - k_2'|^{2-d/2} \, (h_u - r_2)^{-d/2} \\
  &= C |k_1' - k_2'|^{-1} \, (h_u - r_2)^{-3}. }
Summing over $k_1'$ yields a factor of $\log (\delta n)$, while the sums over 
$k_2'$, $r_2$, $r_1$ yield:
\eqnst
{ \sum_{(9/12) \delta n \le r_1 \le h_u - n_1} \, \sum_{r_2 \le r_1} \, \sum_{r_2 < k_2' < h_u}
     (h_u - r_2)^{-3}
  \le C \, \log (\delta n). }
The contribution to (I)-- from $|k_1' - k_2'| < n_1$ is again of order $n_1^4 \, \log (\delta n)$.

We turn to the bound for (II)+, where we initially restrict to $\| z_1 - z_2 \| \ge L_1$.
We apply the local limit theorem to $\bp^{k_2' - r_2}(w_2, \tz_2)$, which gives
\eqnsplst
{ q_{a,1,(II)+}(k_1', k_2', r_1, r_2)
  &\le \frac{C}{D^d} \, (k_2' - r_2)^{-d/2} \, \sum_{w_1, w_2 \in \Z^d} \, 
     \sum_{\tz_1 \in \Z^d} \,\, \sum_{\tz_2 : \| \tz_2 - \tz_1 \| > |k_1' - k_2'|^{1/2}} \\
  &\qquad \| \tz_2 - \tz_1 \|^{4-2d} \, \tbp(w_1) \, \tbp(w_2) \, \bp^{k_1'-r_1}(w_1, \tz_1) \\
  &\le C \, |k_1' - k_2'|^{2-d/2} \, (k_2' - r_2)^{-d/2} \, \sum_{w_1 \in \Z^d} \, 
     \sum_{\tz_1 \in \Z^d} \, \tbp(w_1) \, \bp^{k_1'-r_1}(w_1, \tz_1) \\
  &\le C \, |k_1' - k_2'|^{-1} \, (k_2' - r_2)^{-3}. }
This is the same expression as in case (I)+, and hence yields $C \log^2 (\delta n)$
upon summation. The contribution from $\| z_1 - z_2 \| < L_1$ is bounded by
$C \, L_1^7 \, \log (\delta n)$. 

In the case (II)--, we again restrict to $\| z_1 - z_2 \| \ge L_1$, and apply the local
limit theorem to $\tbp(w_2)$. This yields:
\eqnsplst
{ q_{a,1,(II)-}(k_1', k_2', r_1, r_2)
  &\le \frac{C}{D^d} \, (h_u - r_2)^{-d/2} \, \sum_{w_1, w_2 \in \Z^d} \, 
     \sum_{\tz_1 \in \Z^d} \,\, \sum_{\tz_2 : \| \tz_2 - \tz_1 \| > |k_1' - k_2'|^{1/2}} \\
  &\qquad \| \tz_2 - \tz_1 \|^{4-2d} \, \tbp(w_1) \, \bp^{k_1'-r_1}(w_1, \tz_1) \, \bp^{k_2'-r_2}(w_2, \tz_2) \\
  &\le C \, |k_1' - k_2'|^{2-d/2} \, (h_u - r_2)^{-d/2} \, \sum_{w_1 \in \Z^d} \, 
     \sum_{\tz_1 \in \Z^d} \, \tbp(w_1) \, \bp^{k_1'-r_1}(w_1, \tz_1) \\
  &\le C \, |k_1' - k_2'|^{-1} \, (h_u - r_2)^{-3}. }
This is the same expression as in case (I)--, and hence we conclude as in that case.
The contribution from $\| z_1 - z_2 \| < L_1$ is again bounded by $C \, L_1^7 \, \log (\delta n)$. 

The above altogether shows that if $\delta n \ge \max \{ e^{n_1^4}, e^{L_1^7} \}$, then 
$S_{a,1} \le C \, \sigma^8 \, D^{-2d} \, \log^2 (\delta n)$.

Similar arguments for $S_{a,2}$ yield the lower order bound:
\eqnst
{ S_{a,2} 
  \le \frac{C \, \sigma^6}{D^{2d}} \, \log (\delta n), }
and for $S_{a,3}$ the lower order bound: 
\eqnst
{ S_{a,3} 
  \le \frac{C \, \sigma^4}{D^{2d}} \, \log (\delta n). }
\end{proof}

\begin{proof}[Proof of Lemma \ref{lem:S_c}.]
We split the summations according to whether:\\
\emph{Subcase (c,1):} $r_1 \not= \tr_1$ and $r_2 \not= \tr_2$;\\
\emph{Subcase (c,2):} either $r_1 = \tr_1$ and $r_2 \not= \tr_2$, or 
  $r_1 \not= \tr_1$ and $r_2 = \tr_2$;\\
\emph{Subcase (c,3):} $r_1 = \tr_1$ and $r_2 = \tr_2$.\\
This gives $S_c = S_{c,1} + S_{c,2} + S_{c,3}$, where we bound each term separately.
Using the last case in Lemma \ref{lem:L-exp} and using Lemma \ref{lem:f-bnd}
to sum over $h_y, h_{\ty}$ we have
\eqnsplst
{ S_{c,1} &\le \sigma^8 \, \sum_{(9/12) \delta n \le r_1 \not= \tr_1 \le h_u-n_1} \,\, 
      \sum_{(9/12) \delta n \le r_2 \not= \tr_2 \le h_u-n_1} \,\, \sum_{h_y, h_{\ty} = h_u}^{\delta n} \,
      q_{c,1}(r_1, \tr_1, r_2, \tr_2, h_y, h_{\ty}), }
where
\eqnsplst
{ q_{c,1}
  &= \sum_{w_1, \tw_1 \in \Z^d} \,
    \sum_{w_2, \tw_2 \in \Z^d} \,
    \tbp(w_1, \tw_1) \, \tbp(w_2, \tw_2) \, \bp^{2h_y - r_1 - r_2}(w_1, w_2) \, 
    \bp^{2h_{\ty} - \tr_1 - \tr_2}(\tw_1, \tw_2), }
with $\tbp(w_1, \tw_1)$ and $\tbp(w_2, \tw_2)$, respectively, denoting the joint distributions 
of the spatial locations $w_1, \tw_1$ and $w_2, \tw_2$, respectively.

We explain the bound in the case when $r_2 \le r_1$ and $\tr_2 \le \tr_1$. There are three 
very similar other cases, that can be handled analogously.
We use the local limit theorem to bound 
\eqnsplst
{ \bp^{2h_y - r_1 - r_2}(w_1, w_2)
  &\le \frac{C}{D^d} (h_y - r_2)^{-d/2} \\
  \bp^{2h_{\ty} - \tr_1 - \tr_2}(\tw_1, \tw_2)
  &\le \frac{C}{D^d} (h_{\ty} - \tr_2)^{-d/2}, }
and then sum over $w_1, w_2, \tw_1, \tw_2$ to get the upper bound:
\eqnsplst
{ &\frac{C \, \sigma^8}{D^{2d}} \, \sum_{r_2 \le r_1 \le h_u - n_1} \, \sum_{h_y \ge h_u} (h_y - r_2)^{-3} \,
     \sum_{\tr_2 \le \tr_1 \le h_u - n_1} \, \sum_{h_{\ty} \ge h_u} (h_{\ty} - \tr_2)^{-3} \\
  &\qquad \le \frac{C \, \sigma^8}{D^{2d}} \, \sum_{r_2 \le r_1 \le h_u - n_1} \, (h_u - r_2)^{-2} \,
     \sum_{\tr_2 \le \tr_1 \le h_u - n_1} \, (h_u - \tr_2)^{-2} \\
  &\qquad \le \frac{C \, \sigma^8}{D^{2d}} \, \sum_{r_2 \le h_u - n_1} \, (h_u - r_2)^{-1} \,
     \sum_{\tr_2 \le h_u - n_1} \, (h_u - \tr_2)^{-1} \\
  &\qquad \le \frac{C \, \sigma^8}{D^{2d}} \, \log^2 (\delta n). }

The subcase $S_{c,2}$ leads to the lower order bound
$S_{c,2} \le C \, C_3 \, \sigma^4 / D^{2d}$, and 
$S_{c,3}$ leads to the lower order bound $S_{c,3} \le C \, C^2_3 / D^{2d}$.
\end{proof}

\begin{proof}[Proof of Lemma \ref{lem:S_b}.]
Due to symmetry, it is enough to bound $S_{b1}$. We show that the bound follows 
from the bounds on $S_a$ and $S_c$ already established. We can write $S_{b1}$
in the form 
\eqnsplst
{ S_{b1} 
  = \sum_{y, \ty \in \Z^d} \, \sum_{h_y, h_{\ty} = h_u}^{\delta n} \,
    g_a(y, \ty, h_y, h_{\ty}) \, g_c(y, \ty, h_y, h_{\ty}), }
where $g_a$ involves summation over the variables $r_1, k_1', w_1, \tz_1$,
and $g_c$ involves summation over the variables $r_2, \tr_2, w_2, \tw_2$.
Applying the Cauchy-Schwarz inequality we get
\eqnsplst
{ S_{b1} 
  &\le \left[ \sum_{y, \ty \in \Z^d} \, \sum_{h_y, h_{\ty} = h_u}^{\delta n} \,
     g_a(y, \ty, h_y, h_{\ty})^2 \right]^{1/2} \,
     \left[ \sum_{y, \ty \in \Z^d} \, \sum_{h_y, h_{\ty} = h_u}^{\delta n} \,
     g_c(y, \ty, h_y, h_{\ty})^2 \right]^{1/2} \\
  &= S_{a}^{1/2} \, S_{c}^{1/2}
  \le \frac{C \sigma^8}{D^{2d}} \, \log^2 (\delta n), }
where we use Lemmas \ref{lem:S_a} and \ref{lem:S_c} in the last step.    
\end{proof}

\section{Analysis of Tree Bad Blocks}
\label{sec:tree-bad}

Most of the analysis of tree bad blocks does not require any change compared 
to \cite[Section 4]{JN14}. However, we need to improve the bounds of
\cite[Lemma 4.1]{JN14} and \cite[Lemma 4.3]{JN14}, as the error terms of
the form $(1 + C_4 \delta)$ present in those lemmas are not sufficient 
for the induction argument in $d = 6$. In fact, it turns out that the 
$C_4 \delta$ term can be completely removed, and we prove this improvement 
below.

For $k \le n$ we define
\eqn{e:bar-gamma}
{ \bar\gamma(k; (x,n))
  = \sum_{y \in \Zd} \frac{\bp^k(o,y) \bp^{n-k}(y,x)}{\bp^n(o,x)} \gamma(k,y). }
For $a = 1, \dots, 6$ we define $\cE_{(a)}$ to be the event that conditions
$(1)$ to $(a-1)$ in Definitions \ref{ktreegood} and \ref{kspatialgood}
are satisfied, but condition $(a)$ is not.

\begin{lemma}[{Strengthening of \cite[Lemma 4.1]{JN14}}]
\label{lem:old-Lemma4.1}
We have
\eqnsplst
{ &\E \big[ \Reff ( \Phi(X_i) \conn \Phi(X_{i+K}) ) \,\Big|\, \cE_{(1)}, \Phi(V_n) = (x,n) \Big] 
  \le K \bar\gamma( \delta n ; (x,n) ). }
\end{lemma}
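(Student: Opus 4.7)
My plan is to apply the triangle inequality at the backbone milestones $X_i, X_{i+1}, \dots, X_{i+K}$:
\begin{equation*}
\Reff(\Phi(X_i) \conn \Phi(X_{i+K}))
  \le \sum_{j=0}^{K-1} \Reff_j, \qquad
  \Reff_j := \Reff(\Phi(X_{i+j}) \conn \Phi(X_{i+j+1})),
\end{equation*}
and reduce the lemma to the per-block bound $\E[\Reff_j \mid \cE_{(1)}, \Phi(V_n)=(x,n)] \le \bar\gamma(\delta n;(x,n))$ for every $j$. Summing over $j$ then yields the claim. Recall that conditional on $\Phi(V_n)=(x,n)$, the bridge increment $x_{i+j+1}-x_{i+j}$ has law $\bp^{\delta n}(o,y)\bp^{n-\delta n}(y,x)/\bp^n(o,x)$, which is exactly the averaging kernel in $\bar\gamma$.

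For $j \ge 1$ the argument is clean. The event $\cE_{(1)}$ is measurable with respect to the off-backbone trees attached to $V_{i\delta n},\dots,V_{(i+1)\delta n-1}$, whereas $\Reff_j$ depends only on the off-backbone trees of block $j$ and the random-walk increments through block $j$. Given the backbone, these are independent of $\cE_{(1)}$, so the conditioning can be dropped. Identifying the restriction of $\cT_{n,m}$ to block $j$ as an instance of $\cT_{\delta n, m-(i+j)\delta n}$ (whose second parameter is $\ge 2\delta n$), the definition of $\gamma(\delta n, y)$ as a supremum over $m' \ge 2\delta n$ gives the bound on averaging over the bridge increment.

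The real work is the $j=0$ case, where $\cE_{(1)}$ biases precisely the trees relevant to $\Reff_0$. I would decompose $\cE_{(1)}$ disjointly as $\cE_{(1)} = \cN_0 \cup \cN_{\ge 2} \cup \cN_1^{\mathrm{bad}}$, where $\cN_0$ is the event that no block-$0$ off-backbone tree reaches level $(i+2)\delta n$, $\cN_{\ge 2}$ is the event that two or more do, and $\cN_1^{\mathrm{bad}}$ is the event that exactly one does, at a position $\ell^*\notin[(i+\tfrac14)\delta n,(i+\tfrac34)\delta n]$. On $\cN_0$, each off-backbone tree at $V_{i\delta n+r}$ is conditioned to die within $2\delta n - r$ generations, which matches exactly the distribution of the off-backbone trees in $\cT_{\delta n, 2\delta n}$; taking $m' = 2\delta n$ in the supremum defining $\gamma$ then gives the bound. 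On $\cN_{\ge 2}$, the event is increasing in the off-backbone tree configuration and $\Reff_0$ is a decreasing function of it, so the Harris/FKG inequality applied to the independent off-backbone trees yields $\E[\Reff_0 \mathbf{1}_{\cN_{\ge 2}}] \le \bP(\cN_{\ge 2})\,\E[\Reff_0]$, and division gives the bound. The delicate case is $\cN_1^{\mathrm{bad}}$: I would further condition on $\ell^*$, then argue by coupling, exploiting that the lone $\ell^*$-tree conditioned on survival is stochastically larger while the other trees conditioned on non-survival are stochastically smaller than in the unconditional law, to compare against a suitable $\cT_{\delta n, m'}$ instance.

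The point of this decomposition is to avoid the Radon--Nikodym factor $1/\bP(\cE_{(1)}) \le 1+C_4\delta$ used in \cite[Lemma 4.1]{JN14}, which is precisely the source of the extraneous $(1+C_4\delta)$ multiplier to be removed. The hardest step will be case $\cN_1^{\mathrm{bad}}$, where neither a direct identification with a single $\cT_{\delta n,m'}$ law nor a single FKG-type inequality applies cleanly; one must compare the joint law of a survival-conditioned tree together with extinction-conditioned siblings against an $\cT_{\delta n, m'}$ configuration without incurring any multiplicative correction.
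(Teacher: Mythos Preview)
Your structure---triangle inequality, trivial handling of $j\ge 1$, three-case split of $\cE_{(1)}$ for $j=0$---is exactly the paper's, and your treatment of $\cN_0$ matches. However, you leave $\cN_1^{\mathrm{bad}}$ unresolved, and the FKG step you invoke for $\cN_{\ge2}$ glosses over a non-trivial ingredient.

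The difficulty you describe in $\cN_1^{\mathrm{bad}}$ comes from choosing the wrong comparison target: against the unconditional block-$0$ law, the $\ell^*$-tree moves up while its siblings move down, so no single domination holds. The paper compares instead to the $\cN_0$ law. Conditionally on $\ell^*$, the $\ell^*$-tree (conditioned to reach level $(i+2)\delta n$) stochastically dominates the same tree conditioned \emph{not} to reach, while the remaining trees already carry exactly their $\cN_0$ law. Hence the whole block-$0$ forest under $\{\cN_1^{\mathrm{bad}},\ell^*\}$ dominates its $\cN_0$ law; since the block-$0$ resistance is a decreasing functional of the forest, its conditional expectation is at most the $\cN_0$ value $\le \bar\gamma(\delta n;(x,n))$. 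The same device (condition on the exact set of surviving indices, then dominate $\cN_0$) treats $\cN_{\ge 2}$ as well, unifying all three cases. The key stochastic-domination input---that an off-backbone tree in $\cT_{n,m}$ conditioned on reaching a given level dominates the same tree conditioned on not reaching it---requires positive association of the tree law \emph{already conditioned to die out by height $m$}. This is not automatic (a Galton--Watson measure conditioned on a decreasing event need not retain FKG), and the paper establishes it as a separate lemma via the Holley criterion; it is equally the missing justification behind the correlation inequality you invoke on $\cN_{\ge2}$.
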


For the proof we will need the following stochastic monotonicity result.

\begin{lemma}
\label{lem:monoton}
Let $i \delta n \le j \le (i+1) \delta n - 1$, and consider the event $\cF(j)$
that $\cT_{n,m}(V_j)$ reaches level $(i+2) \delta n$. Then conditioned on 
$\cF(j)$, the distribution of $\cT_{n,m}(V_j)$ is stochastically larger
than unconditionally, which is stochastically lager than conditionally on 
$\cF(j)^c$.
\end{lemma}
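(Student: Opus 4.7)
The plan is to realize $\cT_{n,m}(V_j)$ as a monotone function of a family of independent random variables, so that the FKG inequality applies directly; the nontrivial step is to absorb the conditioning on extinction by time $m-j$ into a modified, level-dependent offspring distribution via a Doob $h$-transform.

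Write $H = m-j$, and let $q(\ell)$ denote the probability that an unconditioned critical GW process with offspring distribution $p$ starting from a single individual dies out by time $\ell$. A routine calculation using the Markov property (applied level by level) shows that, conditionally on $B = \{Z_H = 0\}$, the offspring counts at distinct vertices of $\cT_{n,m}(V_j)$ are independent, and at a vertex at height $k \ge 1$ from $V_j$ the offspring distribution becomes
\eqnst
{ \tilde p_k(r) = \frac{p(r) \, q(H-k-1)^r}{q(H-k)}, \qquad r \ge 0, }
with an analogous modification of $\tp$ at $V_j$ itself (proportional to $\tp(r) q(H-1)^r$). Let $\mathbb{V}$ denote the set of finite sequences of positive integers; I would then realize $\cT_{n,m}(V_j)$ by taking independent variables $(\tilde D_u)_{u \in \mathbb{V}}$ with $\tilde D_u \sim \tilde p_{|u|}$, and defining the random tree $T$ to consist of those $u = (u_1, \ldots, u_k) \in \mathbb{V}$ for which $u_{s+1} \le \tilde D_{(u_1, \ldots, u_s)}$ for every $0 \le s < k$.

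The map $(\tilde D_u) \mapsto T$ is monotone from the coordinate-wise order on $(\tilde D_u)$ to the subtree-inclusion order on trees, since increasing any $\tilde D_u$ can only add descendants. Consequently $\cF(j)$ pulls back to an increasing event in $(\tilde D_u)$, as does any up-closed (subtree-increasing) event $A$ on trees.

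Since the law of $(\tilde D_u)$ is a product measure (the coordinates are independent, though not identically distributed across levels), Harris's FKG inequality yields, for any increasing event $A$ in $(\tilde D_u)$,
\eqnst
{ \bP(A \,|\, \cF(j)) \ge \bP(A) \ge \bP(A \,|\, \cF(j)^c), }
using positive correlation of two increasing events for the first inequality, and negative correlation of an increasing event with the decreasing event $\cF(j)^c$ for the second. Read through the monotone encoding, this is precisely the stochastic dominance of the three tree distributions claimed in the lemma. The main obstacle is the first step: identifying the $B$-conditioned law as a product measure in monotone coordinates via the Doob transform; once that is done, FKG closes the argument immediately.
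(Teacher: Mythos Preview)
Your proof is correct and takes a genuinely different route from the paper. The paper encodes $\cT_{n,m}(V_j)$ by the configuration $(\eta_U)$ of children counts on a fixed infinitary tree, subject to the consistency constraint $\eta_W = 0$ whenever $n(W;V) > \eta_V$; the resulting law is \emph{not} a product measure (both the constraint and the extinction conditioning destroy independence), so the paper appeals to the Holley--Georgii--H\"aggstr\"om--Maes criterion and verifies monotonicity of the one-site conditional distributions through a somewhat tedious case analysis, together with a truncation $p \mapsto p_M$ to satisfy the boundedness hypothesis of that criterion.

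Your Doob-transform observation sidesteps all of this: once the extinction conditioning is absorbed into the level-dependent offspring laws $\tilde p_k$, the Ulam--Harris variables $(\tilde D_u)$ are genuinely independent, the tree is a monotone function of them, and Harris's inequality for product measures applies immediately with no case analysis and no truncation. The only point to make explicit is that $\cF(j)$ and the up-closed tree events depend on at most the coordinates indexed by $\{u : |u| \le H\}$, so the infinite index set causes no difficulty. What the paper's approach buys is that it never invokes the (standard but not entirely trivial) fact that a GW tree conditioned on extinction by a fixed time is again an inhomogeneous GW tree; your approach trades that lemma for a much shorter endgame.
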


\begin{proof}
We show that the distribution of $\cT_{n,m}(V_j)$ enjoys the FKG property.
This is not immediate, since in the definition \ch{of} $\cT_{n,m}$ we are conditioning 
on the decreasing event that level $m$ is not reached. We realize 
$\cT_{n,m}(V_j)$ on the following probability space. Let $\mathbb{T}_{j,m}$ 
denote an infinitary tree with root $V_j$ and with finite height $m-1-j$
(that is, the vertices at distance $m-1-j$ from $V_j$ are the leaves of 
$\mathbb{T}_{j,m}$). If $V,W \in \mathbb{T}_{j,m}$, and $W$ is a child of $V$,
we write $n(W;V) \ge 1$ for the index of $W$ among all children of $V$.
Let 
\eqnst
{ \Omega_{j,m} 
  = \left\{ (\eta_U) \in \mathbb{N}^{\mathbb{T}_{j,m}} : 
    \parbox{9.5cm}{$\eta_W = 0$ when $W$ is a child of $V$ and $n(W;V) > \eta_V$; and
    $\eta_W = 0$ for all $W$ at height $m-1-j$} \right\}. }
Elements of $\Omega_{j,m}$ are in bijection with finite trees of height at most 
$m-1-j$, where $V_j$ has $\eta_{V_j}$ children, each child $W$ of $V_j$ has,
respectively, $\eta_W$ children, etc. Let us write $T = T(\eta)$ for the tree 
represented by $\eta \in \Omega_{j,m}$. Then we have
\eqnst
{ \bP [ \cT_{n,m}(V_j) = T(\eta) ]
  = \frac{1}{Z_{m,j}} \, \tilde{p}(\eta_{V_j}) \times 
    \prod_{U \in T : U \not= V_j} p(\eta_U), }
where $Z_{m,j}$ is a normalizing constant.

We use the criterion of \cite[Theorem 4.11]{GHM01}. That theorem requires 
that $\eta_U$ be bounded. Therefore, in applying the theorem, 
we first take a large $M$, replace $p(M)$ by $\sum_{k \ge M} p(k)$, and 
apply the theorem to this progeny distribution bounded by $M$. 
Then we let $M \to \infty$. The theorem also requires that the probability
of the maximal element be positive. This is the case, if $p(M) > 0$, which 
we may assume without loss of generality. Another requirement is that 
configurations can be transformed into one another by changing one coordinate at 
a time. This is easily verified: any tree can be transformed into the tree
containing only the root, by removing leaves. It is left to check the 
monotonicity property of one site conditional distributions: 
given any configurations $\zeta$ and $\rho$ on the
vertices $\{ W : W \not= V \}$, such that $\zeta \ge \rho$ we need to check that 
\eqnspl{e:mon}
{ &\bP [ \eta_V \le k \,|\, \eta_W = \zeta_W,\, W \not= V ] \\
  &\qquad \le \bP [ \eta_V \le k \,|\, \eta_W = \rho_W,\, W \not= V ], 
      \quad k = 0, \dots, M. }
We do this by checking some cases separately. 
First note that we may assume without loss of generality, that 
either $V$ is the root, or, if $V'$ is the parent of $V$, that 
$n(V;V') \le \rho_{V'} \le \zeta_{V'}$. Indeed, if $\rho_{V'} < n(V;V')$, then 
the right hand side of \eqref{e:mon} equals $1$ already for $k = 0$
(and hence for all $k$). We may also assume without loss of generality 
that the height of $V$ is less than $m-1-j$, otherwise both sides in 
\eqref{e:mon} are $1$ for $k = 0$ (and hence for all $k$).
Under these assumptions, let 
\eqnsplst
{ k_\zeta
  &= \max \{ j \ge 0 : \text{$\zeta_W > 0$ for the child $W$ of $V$ of index $n(W;V) = j$} \} \\ 
  k_\rho
  &= \max \{ j \ge 0 : \text{$\rho_W > 0$ for the child $W$ of $V$ of index $n(W;V) = j$} \}. }
Since $\zeta \ge \rho$, we have $k_\zeta \ge k_\rho$.
We distinguish the following two cases.
\begin{itemize}
\item[(i)] $k < k_\zeta$: In this case, the left hand side of \eqref{e:mon} is $0$,
and hence \eqref{e:mon} holds.
\item[(ii)] $k \ge k_\zeta \ge k_\rho$: In this case, the left hand side in \eqref{e:mon} is 
\eqnst
{ \frac{1}{Z_\zeta} \sum_{\ell = k_\zeta}^k p(\ell) p(0)^{\ell - k_\zeta}, \qquad \text{ with } \qquad
     Z_\zeta
     = \sum_{\ell = k_\zeta}^M p(\ell) p(0)^{\ell - k_\zeta}, }
and the right hand side is
\eqnst
{ \frac{1}{Z_\rho} \sum_{\ell = k_\rho}^k p(\ell) p(0)^{\ell - k_\rho} \qquad \text{ with } \qquad
    Z_\rho
    = \sum_{\ell = k_\rho}^M p(\ell) p(0)^{\ell - k_\rho}. }
Thus the required inequality \eqref{e:mon} boils down to showing that 
\eqnsplst
{ &\sum_{\ell_1 = k_\rho}^M p(\ell_1) p(0)^{\ell_1-k_\rho} 
       \sum_{\ell_2=k_\zeta}^k p(\ell_2) p(0)^{\ell_2-k_\zeta} \\
  &\qquad \le \sum_{\ell_1 = k_\zeta}^M p(\ell_1) p(0)^{\ell_1-k_\zeta} 
       \sum_{\ell_2=k_\rho}^k p(\ell_2) p(0)^{\ell_2-k_\rho}, }
which reduces to
\eqnst
{ \sum_{\ell_1 = k_\rho}^M p(\ell_1) p(0)^{\ell_1} 
       \sum_{\ell_2=k_\zeta}^k p(\ell_2) p(0)^{\ell_2}
  \le \sum_{\ell_1 = k_\zeta}^M p(\ell_1) p(0)^{\ell_1} 
       \sum_{\ell_2=k_\rho}^k p(\ell_2) p(0)^{\ell_2}. }
This is easily verified, by checking that the terms appearing in the 
left hand side form a subset of the terms appearing in the right hand side.
Hence \eqref{e:mon} also holds in this case.
\end{itemize}
Letting $M \to \infty$ we obtain that the distribution of $\cT_{n,m}(V_j)$
has the FKG property, and this implies the statement of the lemma.
\end{proof}

\begin{proof}[Proof of Lemma \ref{lem:old-Lemma4.1}]
Using the triangle inequality for effective resistance, we have
\eqnsplst
{ &\E \Big[ \Reff ( \Phi(X_i) \conn \Phi(X_{i+K}) ) \,\Big|\, \cE_{(1)},\, \Phi(V_n) = (x,n) \Big] \\
  &\qquad \le \sum_{i'=i}^{i+K-1}
     \E \Big[ \Reff ( \Phi(X_{i'}) \conn \Phi(X_{i'+1}) ) \,\Big|\, 
     \cE_{(1)},\, \Phi(V_n) = (x,n) \Big]. }
For $i' = i+1, \dots, i+K-1$, the conditioning on $\cE_{(1)}$ does not affect 
the distribution of the tree between $X_{i'}$ and $X_{i'+1}$, and hence the sum of the 
terms over those $i'$ are bounded by $(K-1) \bar{\gamma}(\delta n ; (x,n))$.

It is left to bound the $i' = i$ term by $\bar{\gamma}(\delta n ; (x,n))$.

When $\cE_{(1)}$ occurs, we have exactly one of the following three cases:
\begin{itemize}
\item[(i)] There are no levels in $[i \delta n, (i+1) \delta n)$ that reach height
$2 \delta n$,
\item[(ii)] There is more than one such level,
\item[(iii)] There is a unique such level $\ell_1$, but 
$\ell_1 \not\in [(i+1/4) \delta n, (i+3/4) \delta n)]$.
\end{itemize}
Let us handle these cases separately. If (i) occurs, each tree $\cT_{n,m}(V_j)$
for $j = i \delta n, \dots, (i+1) \delta n - 1$ is conditioned to not reach 
level $(i+2) \delta n$, and hence the the part of $\cT_{n,m}$ between 
$X_i$ and $X_{i+1}$ is distributed according to the law of $\cT_{\delta n, 2 \delta n}$.
This gives
\eqnsplst
{ &\E \Big[ \Reff ( \Phi(X_i) \conn \Phi(X_{i+1}) ) \bone_{(i)} \,\Big|\, \Phi(V_n) = (x,n) \Big] \\
  &\qquad \le \bar{\gamma}(\delta n; (x,n)) \, \bP ( (i) \,|\, \Phi(V_n) = (x,n) ), }
since in the definition of $\bar{\gamma}(\delta n ; (x,n))$ we take a supremum over
$m \ge 2 \delta n$.

If (ii) occurs, let $j_1, \dots, j_k$ be the levels whose tree reaches height 
$2 \delta n$ ($k \ge 2$), and denote by $\cF(j_1,\dots,j_k)$ the event that (ii) occurs
with exactly these levels. Due to Lemma \ref{lem:monoton}, on the event $\cF(j_1, \dots, j_k)$,
the trees $\cT_{n,m}(V_{j_s})$, $s = 1, \dots, k$ are stochastically larger than 
unconditionally, and hence stochastically larger than on the event (i).
Since $\Reff$ is a decreasing random variable, we have
\eqnsplst
{ &\E \Big[ \Reff ( \Phi(X_i) \conn \Phi(X_{i+1}) ) \,\Big|\, 
     \cF(j_1,\dots,j_k),\, \Phi(V_n) = (x,n) \Big] \\
  &\qquad \le \E \Big[ \Reff ( \Phi(X_i) \conn \Phi(X_{i+1}) ) \,\Big|\, 
     (i),\, \Phi(V_n) = (x,n) \Big] \\
  &\qquad \le \bar{\gamma}(\delta n; (x,n)). }

Finally, if (iii) occurs, an argument similar to the case (ii) (with $k = 1$) shows that 
\eqnsplst
{ &\E \Big[ \Reff ( \Phi(X_i) \conn \Phi(X_{i+1}) ) \,\Big|\, 
     (iii),\, \Phi(V_n) = (x,n) \Big] \\
  &\qquad \le \bar{\gamma}(\delta n; (x,n)). }
This completes the proof of the lemma.
\end{proof}

\begin{lemma}[{\cite[Lemma 4.2]{JN14}}]
\label{lem:old-Lemma4.2}
\eqnsplst
{ &\E \big[ \Reff ( \Phi(X_i) \conn \Phi(X_{i+K}) ) \,\Big|\, 
      \cE_{(2)},\, \ell_1,\, \Phi(V_n) = (x,n) \Big] \\
  &\quad \bar\gamma( \ell_1 - i \delta n ; (x,n) ) + 1 
      + \bar\gamma( (i+1) \delta n - \ell_1 - 1; (x,n) ) \\
  &\quad\quad + (K-1) \bar\gamma( \delta n ; (x,n) ). }
\end{lemma}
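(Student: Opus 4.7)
The plan is to decompose the resistance via two applications of the triangle inequality \eqref{e:triangle}, splitting first the first block from the $K-1$ later blocks, then splitting the first block at the vertices $V_{\ell_1}$ and $V_{\ell_1+1}$ (which are well-defined on $\cE_{(2)}$ thanks to condition~(1) of Definition~\ref{ktreegood}):
\eqnsplst{
\Reff(\Phi(X_i) \conn \Phi(X_{i+K}))
  &\le \Reff(\Phi(X_i) \conn \Phi(V_{\ell_1})) + \Reff(\Phi(V_{\ell_1}) \conn \Phi(V_{\ell_1+1})) \\
  &\quad + \Reff(\Phi(V_{\ell_1+1}) \conn \Phi(X_{i+1})) + \sum_{i'=i+1}^{i+K-1} \Reff(\Phi(X_{i'}) \conn \Phi(X_{i'+1})).
}
The middle term is bounded deterministically by $1$, as it corresponds to the single backbone edge $\{V_{\ell_1}, V_{\ell_1+1}\}$ of unit conductance available in the trace.

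For the first piece, write $k = \ell_1 - i\delta n$. Under $\cE_{(2)}$ and conditional on $\ell_1$, the trees attached to $V_{i\delta n}, \dots, V_{\ell_1-1}$ are conditioned not to reach level $(i+2)\delta n$, which in relative time matches the conditioning on the first $k$ backbone vertices of $\cT_{k, m^*}$ with $m^* = 2\delta n - 1$. Since $\ell_1 \le (i+3/4)\delta n$ implies $m^* \ge 2k$, the supremum in the definition of $\gamma$ allows us to bound the conditional expected resistance of this piece, given the spatial positions $\Phi(X_i)$ and $\Phi(V_{\ell_1})$, by $\gamma(k, \Phi(V_{\ell_1}) - \Phi(X_i))$. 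Averaging over the random walk bridge structure (given $\Phi(V_n) = (x, n)$) then yields $\bar\gamma(k; (x, n))$. By the analogous argument applied to the time-reversed walk, the third piece contributes at most $\bar\gamma((i+1)\delta n - \ell_1 - 1; (x, n))$.

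For the $K-1$ remaining blocks, the event $\cE_{(2)}$ is measurable with respect to the trees attached to $V_{i\delta n}, \dots, V_{(i+1)\delta n - 1}$, so it does not affect the laws of the trees in later blocks. Since the unconditional trees (within $\cT_{n,m}$) are stochastically larger than those in $\cT_{\delta n, 2\delta n}$, each later block contributes at most $\bar\gamma(\delta n; (x, n))$, via the same estimate as in Lemma~\ref{lem:old-Lemma4.1}. Summing over the four pieces gives the stated bound.

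The main obstacle is identifying the correct effective truncation parameter $m^*$ for the first and third pieces: since $\cE_{(2)}$ produces smaller trees (hence larger resistance) than the unconditional $\cT_{k,m}$, a direct comparison with $\gamma(k, \cdot)$ via $m = 2k$ would not work. Choosing $m^* = 2\delta n - 1$ instead keeps us within the valid range $[2k, \infty)$ of the supremum and exactly matches the $\cE_{(2)}$ truncation, so the sup-over-$m$ definition of $\gamma$ yields the desired upper bound.
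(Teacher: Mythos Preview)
The paper does not give its own proof of this lemma; it is quoted verbatim as \cite[Lemma 4.2]{JN14}. Your argument is precisely the standard one from that reference: apply the triangle inequality \eqref{e:triangle} to split at $V_{\ell_1}$, $V_{\ell_1+1}$, and the later block endpoints $X_{i+1},\dots,X_{i+K}$; observe that $\cE_{(2)}$ is measurable with respect to the off-backbone trees emanating from $V_{i\delta n},\dots,V_{(i+1)\delta n-1}$ (so the $K-1$ later blocks are unaffected by the conditioning); and identify the conditional law of the first and third segments with $\cT_{k,m^*}$ for an $m^*\ge 2k$ lying in the range of the supremum defining~$\gamma$. Your remark that the truncation level must be taken as $m^*=2\delta n$ (or $2\delta n-1$) rather than $2k$, and that the inequality $\ell_1\in[(i+1/4)\delta n,(i+3/4)\delta n]$ is exactly what guarantees $m^*\ge 2k$, is the right observation and matches the original argument.
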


\begin{lemma}[{Strengthening of \cite[Lemma 4.3]{JN14}}]
\label{lem:old-Lemma4.3}
We have
\eqnsplst
{ &\E \big[ \Reff ( \Phi(X_i) \conn \Phi(X_{i+K}) ) \,\Big|\, 
       \cE_{(3)},\, \ell_1,\, \Phi(V_n) = (x,n) \Big] \\
  &\quad \le \bar\gamma( \ell_1 - i \delta n ; (x,n) ) + 1 
      + \bar\gamma( (i+1) \delta n - \ell_1 - 1; (x,n) ) \\
  &\quad\quad + (K-1) \bar\gamma( \delta n ; (x,n) ). }
\end{lemma}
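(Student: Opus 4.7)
The plan is to mirror the proof of Lemma \ref{lem:old-Lemma4.1}, using the stochastic monotonicity statement of Lemma \ref{lem:monoton} in place of a cruder comparison. This is what will allow us to shave the $(1+C_4\delta)$ factor that is present in the original \cite[Lemma 4.3]{JN14}.

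First I would apply the triangle inequality in two stages. Since $(1)$ and $(2)$ both hold under $\cE_{(3)}$, the unique level $\ell_1$ and the tree edge $V_{\ell_1}\to V_{\ell_1}^+$ are available, giving
\eqnsplst
{ \Reff(\Phi(X_i) \conn \Phi(X_{i+K}))
  &\le \Reff(\Phi(X_i) \conn \Phi(V_{\ell_1})) + 1 + \Reff(\Phi(V_{\ell_1}^+) \conn \Phi(X_{i+1})) \\
  &\qquad + \sum_{i' = i+1}^{i+K-1} \Reff(\Phi(X_{i'}) \conn \Phi(X_{i'+1})).}
By condition $(1)$, the portion of $\cT_{n,m}$ from $V_{i\delta n}$ to $V_{\ell_1}$ (together with the off-backbone trees along it, all conditioned not to reach $(i+2)\delta n$) is distributed as a piece of $\cT_{\ell_1 - i\delta n,\,\cdot}$ with the right endpoint law. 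Averaging over that endpoint under the bridge law induced by $\Phi(V_n)=(x,n)$ gives the $\bar\gamma(\ell_1 - i\delta n;(x,n))$ bound for the first term, and an analogous argument using the UDP structure of condition $(2)$ inside $\cT_{n,m}(\ell_1)$ yields the $\bar\gamma((i+1)\delta n - \ell_1 - 1;(x,n))$ bound for the third term.

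For the remaining $K-1$ blocks I claim each contributes at most $\bar\gamma(\delta n;(x,n))$ in expectation. The blocks $i' = i+1,\dots,i+K-2$ are unaffected by $\{\cE_{(3)},\ell_1\}$ in the relevant way, and reduce immediately to the unconditional bound. The subtle block is $i' = i+K-1$, where condition $(3)$ fails. As in the three-case analysis used to prove Lemma \ref{lem:old-Lemma4.1}, one splits according to whether (i) no level in $[(i+K-1)\delta n,(i+K)\delta n)$ has its attached tree reaching height $(i+K+1)\delta n$, (ii) more than one such level does, or (iii) a unique such level exists but lies outside the prescribed central sub-interval. In case (i) the attached trees on this block are distributed exactly as in $\cT_{\delta n,2\delta n}$, giving the bound $\bar\gamma(\delta n;(x,n))$. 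In cases (ii) and (iii), Lemma \ref{lem:monoton} shows that the attached trees are stochastically larger than in case (i); since $\Reff$ is a decreasing function of the graph, the resistance is stochastically smaller, and the same bound holds.

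The main obstacle, and the point of novelty compared with \cite[Lemma 4.3]{JN14}, is precisely this handling of the last block: pushing the multiplicative constant in front of $\bar\gamma(\delta n;(x,n))$ down from $1+C_4\delta$ to exactly $1$ relies on the full FKG-type stochastic monotonicity of Lemma \ref{lem:monoton}, rather than a cruder bound with a multiplicative loss. This sharpening is exactly what is needed to close the induction in the critical dimension $d=6$.
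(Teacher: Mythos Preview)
Your overall approach matches the paper's: apply the three-case analysis from the proof of Lemma~\ref{lem:old-Lemma4.1} (using Lemma~\ref{lem:monoton}) to the block $[(i+K-1)\delta n,(i+K)\delta n)$ where condition~(3) fails, with the remaining pieces handled exactly as in Lemma~\ref{lem:old-Lemma4.2}. The paper's proof is literally one line saying this.

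One correction, however: in your triangle-inequality decomposition the vertex after the ``$+1$'' should be $V_{\ell_1+1}$, the next \emph{backbone} vertex, not $V_{\ell_1}^+$, which is the off-backbone child of $V_{\ell_1}$ leading to $\YY_{i+1}$. The bound $\bar\gamma((i+1)\delta n - \ell_1 - 1;(x,n))$ for that stretch then comes, like the first piece, from condition~(1) alone --- the subtrees attached to $V_j$ for $\ell_1 < j < (i+1)\delta n$ are conditioned not to reach level $(i+2)\delta n$, so this stretch has the law of a $\cT_{(i+1)\delta n-\ell_1-1,\,\cdot}$ --- and not from the UDP structure of condition~(2). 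With this fix your argument is complete.
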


\begin{proof}
The modifications required compared to \cite[Lemma 4.3]{JN14} are 
exactly the same as in Lemma \ref{lem:old-Lemma4.1}.
\end{proof}

\begin{lemma}[{\cite[Lemma 4.4]{JN14}}]
\label{lem:old-Lemma4.4}
\eqnsplst
{ &\E \big[ \Reff ( \Phi(X_i) \conn \Phi(X_{i+K}) ) \,\Big|\, 
       \cE_{(4)},\, \ell_1,\, \ell_2,\, \Phi(V_n) = (x,n) \Big] \\
  &\quad \le \bar\gamma( \ell_1 - i \delta n ; (x,n) ) + 1 
      + \bar\gamma( (i+1) \delta n - \ell_1 - 1; (x,n) ) \\
  &\quad\quad + (K-2) \bar\gamma( \delta n ; (x,n) )
      + \bar\gamma( \ell_2 - (i+K-1) \delta n ; (x,n) ) + 1 \\
  &\quad\quad + \bar\gamma( (i+K) \delta n - \ell_2 - 1 ; (x,n) ). }
\end{lemma}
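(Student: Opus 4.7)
The plan is to mirror the proof pattern established in Lemmas \ref{lem:old-Lemma4.1}--\ref{lem:old-Lemma4.3}, now applied along the longer backbone path that passes through both branching points $V_{\ell_1}$ and $V_{\ell_2}$. First, I would apply the triangle inequality \eqref{e:triangle} for effective resistance along the seven-segment path
\eqnst{ \Phi(X_i) \to \Phi(V_{\ell_1}) \to \Phi(V_{\ell_1+1}) \to \Phi(X_{i+1}) \to \cdots \to \Phi(X_{i+K-1}) \to \Phi(V_{\ell_2}) \to \Phi(V_{\ell_2+1}) \to \Phi(X_{i+K}), }
which yields two single-edge contributions (each deterministically $\le 1$, accounting for the two $+1$ summands on the right-hand side of the inequality) and five subtree-resistance contributions.

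Next, I would bound each of the five subtree-resistance contributions in expectation by the corresponding $\bar\gamma$ quantity. The $(K-2)$ middle segments $\Phi(X_{i'}) \to \Phi(X_{i'+1})$ with $i+1 \le i' \le i+K-2$ are each bounded by $\bar\gamma(\delta n; (x,n))$ exactly as in Lemma \ref{lem:old-Lemma4.1}: on $\cE_{(4)}$ the conditioning imposes no restriction on the off-backbone subtrees within these blocks, so the relevant sub-object is distributed as $\cT_{\delta n, m'}$ for a suitable $m' \ge 2\delta n$. The two segments flanking $V_{\ell_1}$ are bounded by $\bar\gamma(\ell_1 - i\delta n; (x,n))$ and $\bar\gamma((i+1)\delta n - \ell_1 - 1; (x,n))$ by the reasoning of Lemma \ref{lem:old-Lemma4.3}: condition (1) forces the off-backbone subtrees in this block (other than $\cT_{n,m}(V_{\ell_1})$ itself) to be conditioned not to reach height $(i+2)\delta n$, Lemma \ref{lem:monoton} shows that such conditioning makes them stochastically dominated by the unconditional ones, and a case split as in the proof of Lemma \ref{lem:old-Lemma4.1} then absorbs the restriction into the supremum defining $\gamma$. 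An analogous argument, with condition (3) in place of condition (1), treats the two segments flanking $V_{\ell_2}$, delivering $\bar\gamma(\ell_2 - (i+K-1)\delta n; (x,n))$ and $\bar\gamma((i+K)\delta n - \ell_2 - 1; (x,n))$.

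The main obstacle I foresee is correctly handling the failure of condition (4), which imposes a potentially complicated conditioning on the subtree $\cT_{n,m}(V_{\ell_2})$: namely, the negation of a UDP requirement at one of $V_{\ell_2}$, $\XX'_{i+K}$, or $\YY_{i+K}$. What makes this manageable is that the two resistance pieces we must bound near $V_{\ell_2}$ depend only on the backbone and on the off-backbone subtrees $\cT_{n,m}(V_j)$ with $(i+K-1)\delta n \le j < \ell_2$ or $\ell_2 < j < (i+K)\delta n$; the internal structure of $\cT_{n,m}(V_{\ell_2})$ above level $(i+K)\delta n$ can only \emph{decrease} these effective resistances by providing shortcuts, so for the upper bound we may simply discard any conditioning at $V_{\ell_2}$ itself. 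Summing the seven contributions and taking conditional expectations then yields the stated inequality.
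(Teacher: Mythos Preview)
Your approach is correct and is exactly the argument behind \cite[Lemma 4.4]{JN14}, which the present paper simply quotes without reproving: triangle inequality along the backbone path through $V_{\ell_1}$ and $V_{\ell_2}$, the two single edges giving the $+1$ terms, and each remaining segment bounded by the corresponding $\bar\gamma$ via the Lemma \ref{lem:old-Lemma4.1}/\ref{lem:old-Lemma4.3} mechanism (restricting to a subgraph that excludes the conditioned subtree at $V_{\ell_1}$ or $V_{\ell_2}$). One cosmetic slip: you speak of ``five subtree-resistance contributions'' but then correctly enumerate $2 + (K-2) + 2 = K+2$ of them; also note that the failure of condition (4) can, via $\cY_{i+K}$, impose extra conditioning on $\cT_{n,m}(\ell_1)$ as well as $\cT_{n,m}(\ell_2)$, but your ``discard the conditioned subtree for the upper bound'' argument handles both symmetrically.
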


\section{Analysis of Spatially Bad Blocks}
\label{sec:spatially-bad}

The analysis of spatially bad blocks does not require any change
compared to \cite{JN14}. For the convenience of the reader, we
repeat the required definitions, and state the results.

We analyze what happens when condition (5) or (6) in
Definition \ref{kspatialgood} fails, that is, some spatial
displacement is ``not typical'', and also what happens when $\cB'(i,c'_0)$ fails.
We write $\Gtree$ for the event
$$ \Gtree = \{ \hbox{(1)--(4)}, \ell_1, \ell_2, \Phi(V_n) = (x,n) \} \, .$$
We define a set of times
$i \delta n = T_0 < T_1 < \dots < T_{K+4} = (i+K) \delta n$,
time differences $t_1, t_2, \ldots, t_{K+4}$ and
spatial locations $z_0, \ldots , z_{K+4} \in \Z^d$ by
\begin{align*}
z_0     &= x_i            &         &                              & T_0     &= i\delta n\\
z_1     &= v_{\ell_1}     & t_1     &= \ell_1 - i \delta n         & T_1     &= \ell_1 \\
z_2     &= v_{\ell_1 + 1} & t_2     &= 1                           & T_2     &= \ell_1 + 1 \\
z_3     &= x_{i+1}        & t_3     &= (i+1) \delta n - \ell_1 - 1 & T_3     &= (i+1) \delta n \\
z_4     &= x_{i+2}        & t_4     &= \delta n                    & T_4     &= (i+2) \delta n \\
z_5     &= x_{i+3}        & t_5     &= \delta n                    & T_5     &= (i+3) \delta n \\
        &\ \ \vdots       &         &\ \ \vdots                    &         &\ \ \vdots \\
z_{K+1} &= x_{i+K-1}      & t_{K+1} &= \delta n                    & T_{K+1} &= (i+K-1) \delta n \\
z_{K+2} &= v_{\ell_2}     & t_{K+2} &= \ell_2 - (i+K-1) \delta n   & T_{K+2} &= \ell_2 \\
z_{K+3} &= v_{\ell_2+1}   & t_{K+3} &= 1                           & T_{K+3} &= \ell_2 + 1 \\
z_{K+4} &= x_{i+K}        & t_{K+4} &= K \delta n - \ell_2 - 1     & T_{K+4} &= (i+K) \delta n
\end{align*}
Observe that conditional on $\Gtree$, the times $T_s$ and time differences
$t_s$ are non-random but the spatial locations $z_s$ are random.
We define for any $s = 1, \dots, K+4$
\eqnst
{ \bq_s(z)
  = \sum_{\substack{\| y_r \| \le \sqrt{t_r} \\ r = 1, \dots, s-1 \\
    y_1 + \dots + y_{s-1} = z}} \prod_{r=1}^{s-1} \bp^{t_r}(0,y_r). }
For any $s = 1, \dots, K+4$ we define the event
\eqnst
{ \cE^s_{(5)}
  = \bigcap_{r=1}^{s-1} \Big\{ \| z_r - z_{r-1} \| \le \sqrt{t_r} \Big\}
    \bigcap \Big\{ \| z_s - z_{s-1} \| > \sqrt{t_s} \Big\}. }
Note that 
\eqn{e:old-(5.2)} 
{ \bP ( \cE^s_{(5)} \,|\, \cG_{\mathrm{tree}} )
  = \sum_{z,y : \| y \| > \sqrt{t_s}} \bq_s(z) 
    \frac{\bp^{t_s}(z,z+y) \bp^{n-T_s+T_0}(z+y,x)}{\bp^n(o,x)}. }

\begin{lemma}[{\cite[Lemma 5.1]{JN14}}]
\label{lem:5.1}
For any $s = 1, \dots, K+4$ and $s' = 1, \dots, K+4$, the quantity
\eqnst
{ \cR_{s',s}
  = \E \Big[ \Reff ( (z_{s'-1}, T_{s'-1}) \conn ( z_{s'}, T_{s'} )
    \bone_{\cE^s_{(5)}} \,\big|\, \cG_{\mathrm{tree}} \Big], }
satisfies:
\eqnsplst
{ \cR_{s',s}
  &\le \bP ( \cE^s_{(5)} \,|\, \cG_{\mathrm{tree}} ), \quad
    \text{when $s' = 2, K+3$}, \\
  \cR_{s',s}
  &\le \bP ( \cE^s_{(5)} \,|\, \cG_{\mathrm{tree}} ) \gamma(t_{s'}), \quad
    \text{when $s' < s$, $s' \not= 2, K+3$}, \\
  \cR_{s',s}
  &\le \sum_{z} \bq_s(z) \, \sum_{y : \| y \| > \sqrt{t_s}}
    \frac{\bp^{t_s}(o,y) \bp^{n-T_s+T_0}(y,x-z)}{\bp^n(o,x)} \gamma(t_s,y), }
when $s' = s$, $s' \not= 2, K+3$, 
\eqnst
{ \cR_{s',s}
  \le \sum_{\substack{z, y_s, y \\ \| y_s \| > \sqrt{t_s}}} \bq_s(z) 
     \frac{\bp^{t_s}(o,y) \bp^{t_{s'}}(o,y) \bp^{n-t_{s'}-T_s}(y,x-z-y_s)}{\bp^n(o,x)} 
     \gamma(t_{s'},y), }
when $s' > s$, $s' \not= 2, K+3$.
\end{lemma}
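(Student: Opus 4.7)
The plan is to compute $\cR_{s',s}$ by first conditioning on the entire sequence of backbone spatial locations $(z_0, z_1, \dots, z_{K+4})$, then separating the expected resistance of the relevant sub-network from the probabilistic weight of that configuration. The key structural fact I would use is that, conditionally on $\cG_{\mathrm{tree}}$ and on the $z_r$'s, the sub-trees producing the resistance on each time interval $[T_{s'-1}, T_{s'}]$ are independent across $s'$, because the only coupling between them is via the backbone, which we have fixed. Hence the expectation inside $\cR_{s',s}$ factorises into a resistance estimate for the single segment indexed by $s'$ and a joint density for the backbone positions.

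My first step would be to identify the correct bound for the resistance of the $s'$-th segment. For $s' \notin \{2, K+3\}$ this segment is (essentially) a copy of $\cT_{t_{s'}, 2 t_{s'}}$ with its endpoints pinned at $z_{s'-1}$ and $z_{s'}$, so by the definition of $\gamma(n,x)$ its conditional expected resistance is at most $\gamma(t_{s'}, z_{s'} - z_{s'-1})$; for the special indices $s' \in \{2, K+3\}$ the segment is a single UDP-edge, and the resistance is exactly $1$. My second step would be to write down the conditional joint density of $(z_0, \dots, z_{K+4})$ given $\cG_{\mathrm{tree}}$. Since the backbone evolves as an unconditioned random walk between the prescribed heights, subject only to the constraint $\Phi(V_n) = (x,n)$, this density is proportional to $\prod_{r=1}^{K+4} \bp^{t_r}(z_{r-1}, z_r)$ and is normalised by $\bp^n(o,x)$. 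On $\cE^s_{(5)}$ I would restrict the first $s-1$ summations to typical displacements, collapsing them into the factor $\bq_s(z_{s-1})$, and would restrict the $s$-th summation to $\| z_s - z_{s-1} \| > \sqrt{t_s}$.

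With these two ingredients in place, the four stated inequalities would follow by case analysis on the position of $s'$ relative to $s$. When $s' \in \{2, K+3\}$ the resistance factor is $1$ and the summation reproduces $\bP(\cE^s_{(5)} \mid \cG_{\mathrm{tree}})$ via \eqref{e:old-(5.2)}. When $s' < s$ with $s' \notin \{2, K+3\}$, the event $\cE^s_{(5)}$ forces $\| z_{s'} - z_{s'-1} \| \le \sqrt{t_{s'}}$, so $\gamma(t_{s'}, z_{s'} - z_{s'-1}) \le \gamma(t_{s'})$ by the definition $\gamma(n) = \sup_{\| x \| \le \sqrt{n}} \gamma(n,x)$; the remaining sums again produce $\bP(\cE^s_{(5)} \mid \cG_{\mathrm{tree}})$. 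When $s' = s$ the atypical displacement $y = z_s - z_{s-1}$ is tagged, $\gamma(t_s, y)$ is retained, and the transitions after step $s$ are collapsed by Chapman--Kolmogorov to yield $\bp^{n - T_s + T_0}(y, x - z_{s-1})$. When $s' > s$ both the atypical step $y_s$ at position $s$ and the step $y$ at position $s'$ are retained, and Chapman--Kolmogorov is applied separately to the transitions strictly between $T_s$ and $T_{s'-1}$ and strictly after $T_{s'}$, giving the factor $\bp^{n - t_{s'} - T_s}(y, x - z_{s-1} - y_s)$.

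The main obstacle is the combinatorial bookkeeping in the last case, where two distinct ``marked'' displacements must be isolated before the Chapman--Kolmogorov collapses can be performed cleanly; the $\bq_s$ factor absorbs everything strictly before the $s$-th step, but the intermediate block between steps $s$ and $s'$ and the tail after step $s'$ must each be handled separately. All of this goes through exactly as in the corresponding argument of \cite[Lemma 5.1]{JN14}, since the only ingredient specific to our setting is the definition of $\gamma$, whose relevant properties are preserved.
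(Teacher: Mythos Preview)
Your plan is correct and matches the approach of \cite{JN14}, from which the paper imports this lemma without proof (see the opening of Section \ref{sec:spatially-bad}). The only minor imprecision is the phrase ``a copy of $\cT_{t_{s'}, 2t_{s'}}$'': conditionally on $\cG_{\mathrm{tree}}$, each segment is distributed as $\cT_{t_{s'}, m'}$ for some $m' \ge 2 t_{s'}$ that depends on the segment (e.g., $m' = 2\delta n$ for $s'=1$, $m' = m - j\delta n$ for the middle segments), but since $\gamma(t_{s'}, \cdot)$ is defined as the supremum over all such $m'$, this only helps you.
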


\begin{lemma}[{\cite[Lemma 5.2]{JN14}}]
\label{lem:5.2}
\eqnsplst
{ &\E \big[ \Reff ( \Phi(X_i) \conn \Phi(X_{i+K}) ) \,\Big|\, 
       \cE_{(6)} \cup \cE_{(7)},\, \cG_{\mathrm{tree}} \Big] \\
  &\quad \leq (K-2) \gamma(\delta n) + \gamma( \ell_1 - i \delta n ) + 1 
      + \gamma( (i+1) \delta n - \ell_1 - 1 ) \\
  &\quad\quad + \gamma( \ell_2 - (i+K-1) \delta n ) + 1 
      + \gamma( (i+K) \delta n - \ell_2 - 1 ). }
\end{lemma}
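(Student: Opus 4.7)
The plan is to apply the triangle inequality \eqref{e:triangle} deterministically along the $K+3$ intermediate backbone landmarks
\eqnst
{ \Phi(X_i),\, \Phi(V_{\ell_1}),\, \Phi(V_{\ell_1+1}),\, \Phi(X_{i+1}),\, \dots,\, \Phi(X_{i+K-1}),\, \Phi(V_{\ell_2}),\, \Phi(V_{\ell_2+1}),\, \Phi(X_{i+K}), }
and then take conditional expectation. The two single edges $\{\Phi(V_{\ell_1}),\Phi(V_{\ell_1+1})\}$ and $\{\Phi(V_{\ell_2}),\Phi(V_{\ell_2+1})\}$ have unit conductance and therefore contribute a deterministic resistance of $1$ each, accounting for the two ``$+1$'' terms on the right-hand side.

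It remains to estimate the expected resistance of each of the $K+2$ remaining segments. I would do this segment by segment, using the definition of $\gamma$ as a supremum over $m \ge 2k$ of the conditional expected resistance given the endpoint. Concretely, conditional on $\cG_{\mathrm{tree}}$ together with $\ell_1, \ell_2$ and $\Phi(V_n)=(x,n)$, each interior segment $(X_j,X_{j+1})$ with $i+1\le j\le i+K-2$ is governed by a sub-instance of $\cT_{\delta n, m'}$ with $m'\ge 2\delta n$, contributing at most $\gamma(\delta n)$ to the expectation; and each of the four boundary segments $(X_i,V_{\ell_1}),(V_{\ell_1+1},X_{i+1}),(X_{i+K-1},V_{\ell_2}),(V_{\ell_2+1},X_{i+K})$ is similarly governed by a sub-instance of $\cT_{k,m'}$ where $k$ is the corresponding time difference $\ell_1-i\delta n$, $(i+1)\delta n-\ell_1-1$, $\ell_2-(i+K-1)\delta n$, or $(i+K)\delta n-\ell_2-1$. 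Summing these bounds reproduces the right-hand side of the claimed inequality.

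The main subtlety, and the only genuinely new work needed beyond the proof of \cite[Lemma 5.2]{JN14}, is to justify that the additional conditioning on $\cE_{(6)}\cup\cE_{(7)}$ does not worsen the segment-wise bound by $\gamma$. Here I would argue as follows. The event $\cE_{(6)}$ is measurable with respect to the spatial displacements of certain distinguished paths within the $\YY$-tree and the $\XX'$-tree emanating after $X_{i+K-1}$, and $\cE_{(7)}$ is measurable with respect to the spatial intersections between the $\cT^B_1$ and $\cT^B_2$ subtrees. Neither event affects the tree topology or the spatial displacements of the independent subtrees attached at the backbone vertices of the segments listed above (beyond what is already encoded in $\cG_{\mathrm{tree}}$). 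Thus, conditioning on $\cG_{\mathrm{tree}}$ first and then conditioning further on $\cE_{(6)}\cup\cE_{(7)}$ leaves the distribution of each of the $K+2$ segment subtrees (and hence their expected resistances) unchanged, allowing the $\gamma$ bounds to be applied unaffected.

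The main obstacle will be organising this conditioning carefully, in particular separating the spatial mapping randomness controlling $\cE_{(6)}\cup\cE_{(7)}$ from the tree and spatial randomness driving the segments, and verifying that the subtree on each segment, after conditioning, still falls within the family of doubly-rooted trees over which the supremum defining $\gamma$ is taken. Once this independence/measurability bookkeeping is in place, the bound follows by linearity of expectation from the segment-wise $\gamma$ estimates and the two unit-edge contributions.
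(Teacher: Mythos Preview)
Your approach matches the paper's (which simply cites \cite[Lemma 5.2]{JN14} without re-proving it): triangle inequality along the backbone landmarks, then segment-wise $\gamma$-bounds.

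One point is imprecise, however. You write that conditioning on $\cE_{(6)}\cup\cE_{(7)}$ ``leaves the distribution of each of the $K+2$ segment subtrees unchanged'', framing the extra conditioning purely as a potential obstacle. In fact the key observation is that on $\cE_{(6)}\cup\cE_{(7)}$ condition (5) of Definition~\ref{kspatialgood} \emph{holds}: $\cE_{(6)}$ requires (1)--(5), and $\cE_{(7)}=\cA(i)\cap\cB'(i,c'_0)^c$ requires (1)--(6). So the conditioning \emph{does} change the backbone-segment distributions --- it forces $\|z_s-z_{s-1}\|\le\sqrt{t_s}$ for every $s$ --- and this is precisely what lets you use $\gamma(t_s)=\sup_{\|y\|\le\sqrt{t_s}}\gamma(t_s,y)$ rather than the averaged quantity $\bar\gamma$. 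The clean bookkeeping is: condition first on all backbone locations $z_0,\dots,z_{K+4}$; given these and $\cG_{\mathrm{tree}}$, each segment resistance is independent of the (6)- and $\cB'$-data (which live entirely in the $\cY$- and $\cX'$-subtrees emanating from $V_{\ell_1}^+$ and $V_{\ell_2}^+$, disjoint from the backbone segments); on the event, (5) holds, so $\gamma(t_s,z_s-z_{s-1})\le\gamma(t_s)$.

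Also, there is no ``new work beyond the proof of \cite[Lemma 5.2]{JN14}'': this \emph{is} that lemma, quoted verbatim. The only change in the present paper is that $\cE_{(7)}$ now refers to $\cB'(i,c'_0)^c$ rather than $\cB(i,c_0)^c$, and the argument is identical since $\cB'$, like $\cB$, is measurable with respect to the $\cX'$- and $\cY$-subtrees.
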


Let us write $n$ in the form $n = N K \delta n + K' \delta n + \tilde{n}$,
where $0 \le K' < K$ and $\delta n \le \tilde{n} < 2 \delta n$.
Write $i^{\mathrm{last}} = K N$.

\begin{lemma}[{\cite[Lemma 5.3]{JN14}}]
\label{lem:5.3}
\eqnsplst
{ &\E \Big[ \Reff ( \Phi(X_{i^{\mathrm{last}}}) \conn \Phi(V_n) ) \,\Big|\, \Phi(V_n) = (x,n) \Big] \\
  &\qquad \le K' \bar{\gamma}(\delta n ; (x,n) ) + \bar{\gamma}( \tilde{n} ; (x,n) ). }
\end{lemma}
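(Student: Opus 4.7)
The plan is to apply the triangle inequality \eqref{e:triangle} to the backbone stretch between $\Phi(X_{i^{\mathrm{last}}})$ and $\Phi(V_n)$, and then average the contribution of each piece using the conditional distribution of the backbone walk given $\Phi(V_n)=(x,n)$. Introduce intermediate times $T_s := i^{\mathrm{last}} \delta n + s\delta n$ for $s=0,\dots,K'$ and $T_{K'+1}:=n$, so that $t_s := T_s-T_{s-1}$ equals $\delta n$ for $s\le K'$ and equals $\tilde n$ for $s=K'+1$. The triangle inequality gives
\eqnst{ \Reff(\Phi(X_{i^{\mathrm{last}}}) \conn \Phi(V_n)) \le \sum_{s=1}^{K'+1} \Reff(\Phi(V_{T_{s-1}}) \conn \Phi(V_{T_s})). }

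For each fixed $s$, I would further condition on the endpoint positions $(y_{s-1},y_s)$. The subgraph of $\cT_{n,m}$ spanned by the backbone piece $V_{T_{s-1}},\dots,V_{T_s}$ together with the subtrees attached to its intermediate vertices is, conditionally on $\Phi(V_n)=(x,n)$ and on those endpoint positions, an independent copy of the corresponding portion of a $\cT_{t_s,\,m-T_{s-1}}$ tree conditioned on $\Phi(V_{t_s})=(y_s-y_{s-1},t_s)$, after the obvious space-time shift. Provided $m$ is large enough that $m-T_{s-1}\ge 2t_s$ for every $s$ --- which may be arranged since the sup in the definition of $\gamma$ allows $m$ to be taken arbitrarily large --- the definition of $\gamma$ yields $\E[\Reff(\Phi(V_{T_{s-1}}) \conn \Phi(V_{T_s})) \,|\, y_{s-1},y_s,\Phi(V_n)=(x,n)] \le \gamma(t_s,y_s-y_{s-1})$. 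Averaging this bound over $(y_{s-1},y_s)$ given $\Phi(V_n)=(x,n)$, the Markov property of the backbone walk and the telescoping identity $\sum_a \bp^{T_{s-1}}(o,a)\,\bp^{n-T_s}(y,x-a) = \bp^{n-t_s}(y,x)$ produce
\eqnst{ \E\Big[\gamma\bigl(t_s,\,S(T_s)-S(T_{s-1})\bigr) \,\Big|\, S(n)=x\Big] = \sum_{y\in\Zd} \frac{\bp^{t_s}(o,y)\,\bp^{n-t_s}(y,x)}{\bp^n(o,x)}\, \gamma(t_s,y) = \bar\gamma(t_s;(x,n)), }
independently of where the segment sits on the backbone.

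Summing over $s=1,\dots,K'+1$ --- with $K'$ contributions of $\bar\gamma(\delta n;(x,n))$ and one contribution of $\bar\gamma(\tilde n;(x,n))$ --- yields the claimed upper bound. The only real obstacle is cosmetic: one must check that conditioning on $\Phi(V_n)=(x,n)$ affects only the backbone walk and leaves the subtrees attached to the backbone mutually independent with their prescribed die-out laws, so that each segment really does have the $\cT_{t_s,m-T_{s-1}}$ distribution asserted above, and that the sup inside $\gamma$ absorbs the precise value of $m-T_{s-1}$. This is the same bookkeeping already carried out segment-by-segment in Lemmas \ref{lem:old-Lemma4.1}--\ref{lem:old-Lemma4.4}, so no new ingredient is required.
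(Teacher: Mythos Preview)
Your proposal is correct and follows essentially the same approach as \cite{JN14}: apply the triangle inequality along the backbone to split into $K'$ segments of length $\delta n$ and one of length $\tilde n$, then use that each segment (with its attached subtrees) is distributed as $\cT_{t_s,\,m-T_{s-1}}$, so that the conditional expected resistance is bounded by $\gamma(t_s,\cdot)$, and finally average over the displacement via the random-walk bridge to obtain $\bar\gamma(t_s;(x,n))$. One minor remark: the condition $m-T_{s-1}\ge 2t_s$ in fact holds automatically for every $m\ge 2n$ (since $T_{s-1}\le n-\tilde n$ and $t_s\le \tilde n\le n$), so there is no need to take $m$ large.
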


\section{Analysis of good blocks}
\label{sec:good-blocks}

In this section we estimate expectations of resistances given
$$ \Ggood' = \big \{ \Phi(V_n)=(x,n), \cA(i), \cB'(i,c'_0), \ell_1, \ell_2 \big \} \, .$$
The following lemma is essentially \cite[Lemma 6.1]{JN14}, with $\cGgood$ 
replaced by $\cGgood'$, and requires no modification in its proof.

\begin{lemma}
\label{easyestimates}
Given $\Ggood'$, we have
\begin{enumerate}
\item $\E \big [ \Reff( \Phi(X_i) \lra \Phi(V_{\ell_1}) ) \, \mid \, \Ggood' \big ] \leq \gamma(\ell_1 - i \delta n)$.
\item $\E \big [ \Reff( \Phi(V_{\ell_1}) \lra \Phi(X_{i+1}) ) \, \mid \, \Ggood' \big ] \leq \gamma((i+1) \delta n - \ell_1-1)+1$.
\item For all $i+1 \leq j \leq i+K-2$ we have
$$ \E \big [ \Reff( \Phi(X_j) \lra \Phi(X_{j+1}) ) \, \mid \, \Ggood' \big ] \leq \gamma(\delta n) \, .$$
\item $\E \big [ \Reff( \Phi(X_{i+K-1}) \lra \Phi(V_{\ell_2}) ) \, \mid \, \Ggood' \big ] \leq \gamma(\ell_2 - (i+K-1)\delta n)$.
\item $\E \big [ \Reff( \Phi(V_{\ell_2}) \lra \Phi(X_{i+K}) )  \, \mid \, \Ggood' \big ] \leq \gamma((i+K)\delta n - \ell_2 -1) + 1$.
\item $\E \big [ \Reff( \Phi(V_{\ell_2}) \lra \Phi(X'_{i+K})) \, \mid \, \Ggood' \big ] \leq \gamma((i+K)\delta n - \ell_2 -1) + 1$.
\item $\E \big [ \Reff( \Phi(V_{\ell_1}) \lra \Phi(Y_{i+1}) ) \, \mid \, \Ggood' \big ] \leq \gamma((i+1) \delta n - \ell_1-1)+1$.
\item For all $i+1 \leq j \leq i+K-1$ we have
$$ \E \big [ \Reff( \Phi(Y_j) \lra \Phi(Y_{j+1}) ) \, \mid \, \Ggood' \big ] \leq \gamma(\delta n) \, .$$
\end{enumerate}
\end{lemma}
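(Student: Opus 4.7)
The plan is to reduce each of the eight bounds to the definition of $\gamma(k)$ via two observations. First, I will argue that the event $\cB'(i,c'_0)$ is measurable with respect to the descendants of $\XX'_{i+K}$ and $\YY_{i+K}$ at heights strictly above $(i+K)\delta n$. By the Markov property of the branching tree at generation $(i+K)\delta n$, conditioning on $\cB'(i,c'_0)$ in addition to $\Gtree \cap \cA(i) \cap \{\Phi(V_n)=(x,n)\}$ does not alter the joint distribution of the backbone pieces and off-backbone subtrees at heights at most $(i+K)\delta n$. Since every vertex appearing in items~(1)--(8) lies at height at most $(i+K)\delta n$, I may drop the conditioning on $\cB'(i,c'_0)$ throughout and work under $\Gtree \cap \cA(i)$.

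Second, for each item I will identify the relevant sub-segment of $\cT_{n,m}$ between its two endpoints, and verify that, conditional on $\Gtree \cap \cA(i)$, it has the law of $\cT_{k,m'}$ for $k$ equal to the height difference and some $m' \ge 2k$. For item~(1), the relevant segment is the backbone piece from $X_i$ to $V_{\ell_1}$ together with the off-backbone subtrees attached at levels $j \in [i\delta n, \ell_1)$, each conditioned by $\Gtree$ to die out before level $(i+2)\delta n$; a direct inspection shows this is a copy of $\cT_{k,m'}$ with $k = \ell_1 - i\delta n \le \delta n$ and $m' = 2\delta n \ge 2k$. Hence, by the definition of $\gamma(k,\cdot)$,
\eqnst
{ \E\Big[ \Reff\big(\Phi(X_i) \lra \Phi(V_{\ell_1})\big) \,\Big|\, \Gtree, \cA(i), \Phi(V_n)=(x,n) \Big]
  \le \E\Big[ \gamma\big(k,\, v_{\ell_1}-x_i\big) \,\Big|\, \Gtree, \cA(i), \Phi(V_n)=(x,n) \Big]. }
The typical-spacing event $\TS(X_i,V_{\ell_1})$ packaged into $\cA(i)$ forces $\|v_{\ell_1}-x_i\| \le \sqrt{k}$, so the right-hand side is at most $\sup_{\|y\|\le\sqrt{k}}\gamma(k,y) = \gamma(k)$, yielding item~(1).

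Items~(3), (4), and~(8) are handled by the same recipe, using the typical-spacing events $\TS(X_j,X_{j+1})$, $\TS(X_{i+K-1},V_{\ell_2})$, and $\TS(\YY_j,\YY_{j+1})$ built into $\cA(i)$ via Definition~\ref{kspatialgood}. Items~(2), (5), (6), and~(7) differ only in that the path must first traverse a single backbone edge $\{V_{\ell_1},V_{\ell_1}^+\}$ or $\{V_{\ell_2},V_{\ell_2}^+\}$; this unit-conductance edge contributes resistance exactly $1$, and the triangle inequality \eqref{e:triangle} splits the expected resistance additively as $1$ plus the resistance of the subsequent tree piece of height $(i+1)\delta n - \ell_1 - 1$ or $(i+K)\delta n - \ell_2 - 1$, to which the argument for item~(1) applies verbatim, together with the appropriate typical-spacing event.

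I do not expect any substantive obstacle. The only point worth a sentence of justification is the Markov-property reduction in the first paragraph, which is immediate once one observes that $\cB'(i,c'_0)$ is defined entirely in terms of descendants of $\XX'_{i+K}$ and $\YY_{i+K}$ at heights $\ge (i+K)\delta n$, while $\Gtree \cap \cA(i)$ already fixes the structure of $\cT_{n,m}$ at heights $\le (i+K)\delta n$ in a way that is conditionally independent of what happens above. The remainder is bookkeeping identical in spirit to the proof of \cite[Lemma~6.1]{JN14}, with $\cGgood$ replaced throughout by $\cGgood'$.
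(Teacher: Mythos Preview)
Your proposal is correct and matches the paper's approach. The paper does not give a proof, stating only that the lemma is essentially \cite[Lemma~6.1]{JN14} with $\cGgood$ replaced by $\cGgood'$ and requires no modification; your sketch spells out precisely why this replacement is harmless---namely, that $\cB'(i,c'_0)$ depends only on the descendants of $\XX'_{i+K}$ and $\YY_{i+K}$ (and their $\Phi$-images) above level $(i+K)\delta n$, and is therefore conditionally independent of every tree segment appearing in (1)--(8)---and then reduces each item to the definition of $\gamma(k)$ via the appropriate $\TS$ event in $\cA(i)$, exactly as in \cite{JN14}.
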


The following lemma replaces \cite[Lemma 6.2]{JN14}.

\begin{lemma}
\label{lem:short-cut}
Assume $d=6$. There exists $C_5' < \infty$ such that we have
\eqnst
{ \E \Big[ \Reff ( \Phi(X'_{i+K}) \conn \Phi(Y_{i+K}) ) \,\Big|\, \cGgood' \Big]
  \le C_5' \, \max_{1 \le \ell \le \delta n} \gamma(\ell), }
whenever $\delta n \ge \max \{ n_1(\bp^1),\, n'_9(\sigma^2, C_3, \bp^1) \}$.
\end{lemma}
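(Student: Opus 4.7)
The plan is to use any intersection pair $(U_1, U_2) \in \cI$ (whose existence is guaranteed by $\cB'(i, c'_0)$) as a short-cut between $\Phi(X'_{i+K})$ and $\Phi(Y_{i+K})$, decompose the resistance via the triangle inequality, and bound each of the four resulting sub-resistances using Lemma \ref{lem:distrib}. The key technical point is that we must absorb the $\bP(\cGgood') \gtrsim 1/\log(\delta n)$ factor of Proposition \ref{prop:J-moments}; this is done by averaging the short-cut bound over all pairs in $\cI$ and using the lower bound $|\cI| \ge c'_0 \sigma^4 D^{-d} \log(\delta n)$ on $\cGgood'$ to cancel the corresponding $\log(\delta n)$ in the denominator.

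First, for any $(U_1, U_2) \in \cI$, the triangle inequality \eqref{e:triangle} applied along the tree path through $U_1 = U_2$ gives
\eqnsplst{ \Reff(\Phi(X'_{i+K}) \conn \Phi(Y_{i+K}))
  &\le \Reff(\Phi(X'_{i+K}) \conn \Phi(Z_1)) + 1 + \Reff(\Phi(Z_1^+) \conn \Phi(U_1)) \\
  &\quad + \Reff(\Phi(Z_2^+) \conn \Phi(U_2)) + 1 + \Reff(\Phi(Z_2) \conn \Phi(Y_{i+K}))
  =: R(U_1,U_2), }
where the unit terms account for the edges $\{ Z_j, Z_j^+ \}$, $j = 1, 2$, and we used $\Phi(U_1) = \Phi(U_2)$. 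The left-hand side is independent of $(U_1, U_2)$, so it is at most the average of $R(U_1, U_2)$ over $\cI$. Combining with $|\cI|\ge c'_0 \sigma^4 D^{-d} \log(\delta n)$ yields, on $\cGgood'$,
\eqnst{ \Reff(\Phi(X'_{i+K}) \conn \Phi(Y_{i+K})) \cdot \bone_{\cGgood'}
  \le \frac{D^d}{c'_0 \sigma^4 \log(\delta n)} \sum_{(u_1, u_2) \in T_M \times T_M} R(u_1, u_2) \, \bone_{(u_1, u_2) \in \cI}. }

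Second, I would bound $\E[R(u_1, u_2) \, \bone_{(u_1, u_2) \in \cI}]$ for each fixed $(u_1, u_2)$. Conditional on $\{(u_1, u_2) \in \cI\}$, Lemma \ref{lem:distrib} identifies each of the four sub-trees appearing in $R(u_1, u_2)$ with an appropriately parametrized $\cT_{k, m}$ with $k \le \delta n$. The intersect-well definition forces the spatial endpoints of each sub-tree to satisfy the $\TS$ property, so each conditional sub-resistance is dominated by $\gamma(k, \cdot) \le \max_{1 \le \ell \le \delta n} \gamma(\ell)$. Summing over $(u_1, u_2)$ then gives an upper bound of $C \max_\ell \gamma(\ell) \cdot \E |\cI|$; in $d = 6$, a first-moment calculation analogous to the one in the proof of Theorem \ref{thm:I-moments} shows $\E |\cI| = O(1)$, because intersections are restricted to the constant-fraction height window $[(5/6)\delta n, \delta n]$. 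Dividing by $\bP(\cGgood') \gtrsim 1/\log(\delta n)$ cancels the remaining $\log(\delta n)$ factor and yields $\E[\Reff \mid \cGgood'] \le C_5' \max_\ell \gamma(\ell)$.

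The main obstacle is the careful application of Lemma \ref{lem:distrib} under the compound conditioning: one must verify that the four sub-trees --- the pair below $Z_1, Z_2$ of time-heights at most $(4/6)\delta n$, and the pair between $Z_j^+$ and $u_j$ of time-heights at most $(1/2)\delta n$ --- are conditionally independent given $\{(u_1, u_2) \in \cI\}$, and that their endpoints satisfy the $\TS$ spatial displacements necessary for the $\gamma$-bound. These are exactly what the intersect-well definition guarantees, but they produce a non-trivial bookkeeping step. A minor additional point is verifying $\E |\cI| = O(1)$, which is sharper than the $O(\sqrt{\log \delta n})$ bound one would obtain directly from Theorem \ref{thm:I-moments} via Cauchy--Schwarz, but follows easily by the same local-CLT summation used there.
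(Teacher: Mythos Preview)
Your proposal is correct and follows essentially the same route as the paper's argument (which the main text defers to \cite[Lemma 6.2]{JN14}, but whose full details appear in the source): average the triangle-inequality bound $R(U_1,U_2)$ over all pairs in $\cI$, use $|\cI|^{-1} \le D^d/(c'_0\sigma^4\log(\delta n))$ on $\cB'$, apply Lemma~\ref{lem:distrib} together with the $\TS$ restrictions to bound each of the four sub-resistances by $\max_{\ell}\gamma(\ell)$, recover the first-moment bound $\E|\cI| = O(\sigma^4/D^d)$ via the local CLT, and finally divide by $\bP(\cB'\,|\,\cA(i))\gtrsim 1/\log(\delta n)$. Your identification of the delicate step is accurate: one should not condition directly on $\{(u_1,u_2)\in\cI\}$ when invoking Lemma~\ref{lem:distrib}, but rather further decompose over $\cV_1=V_1$, $\cV_2=V_2$ and the spatial locations $z_j, z_j^+, u$ (the intersect-well constraints then live in the summation range and can be dropped for an upper bound), exactly the ``bookkeeping'' you flagged.
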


The proof is a straightforward adaptation of the proof of \cite[Lemma 6.2]{JN14}.
We first assume that the progeny distribution is bounded by $M$, prove the statement
with this restriction relying on Theorem \ref{thm:intersect}, and then let $M \to \infty$.
We omit the details.

\begin{proof}[Proof of Theorem \ref{thm:good-blocks}]
This can be completed exactly as in \cite[Section 6]{JN14}.
\end{proof}

\section{Proof of Theorem \ref{thm:main}}

Let $K_0$ be the constant in Theorem \ref{thm:good-blocks}. We fix $K = K_0$ for the
remainder of the proof. Let 
\eqnst
{ n_0 
  = \max \{ n'_3(\sigma^2, C_3, \bp^1, K), \, n'_4(\sigma^2, C_3, \bp^1),\, 4 k_1(\bp^1) \}, }
where $n'_3$ and $n'_4$ are the constants from Theorems \ref{thm:intersect} and 
\ref{thm:good-blocks}, and $k_1$ is the constant from Proposition \ref{prop:variance}.
Let $\delta_0 > 0$, $\xi \in (0,1/2)$ and $A > 0$ be constants.
These will be chosen below in the order: $\delta_0, \xi, A$, and among others 
we will require that 
\eqn{e:old-(7.1)}
{ 2 \delta_0 \le (K+4)^{-1}, \qquad\qquad
  2 \delta_0 \le \ch{\delta_1}, } 
where $\delta_1$ is the constant from Proposition \ref{prop:variance}(ii). 
Once $\delta_0$ and $\xi$ will be chosen, we choose $A$ to satisfy:
\eqn{e:A-ineq}
{ A 
  \ge \max \left\{ \frac{n_0}{\delta_0},\, \frac{1}{\delta_0^2} \right\}, \qquad\quad
  \exp \left( -\frac{1}{2} A^{1/\xi} \right) 
  \le \frac{1}{\sqrt{n_0}}, \qquad\quad
  A^{-1} \le \xi  \frac{n_0}{(\log n_0)^{1 + \xi}}. }
We prove the theorem by induction. Due to the first condition on $A$, 
the theorem holds for $n < \max \{ n_0/\delta_0,\, 1/\delta_0^2 \}$, so we may assume 
$n \ge n_0/\delta_0$ and $n \ge 1/\delta_0^2$. 
Our induction hypothesis is that for all $n' < n$ and for all
$x \in \Zd$ we have
\eqn{e:induct-hyp}
{ \gamma(n',x) 
  \le \begin{cases}
      A n' (\log n')^{-\xi} & \text{when $\| x \| \le \sqrt{n'}$;} \\
      A n' (\log n')^{-\xi} \, 
          \left( 1 -  \frac{\log \left( \|x\|^2 / n' \right)}{\log n'} \right)^{-\xi} 
          & \text{when $\sqrt{n'} < \|x\| \le n'/2$;} \\
      A n' & \text{when $\|x\| >  n'/2$,}
      \end{cases} }
and given the hypothesis, we prove it for $n$. Sometimes, instead of \eqref{e:induct-hyp}, 
it will be convenient to use the following consequence of \eqref{e:induct-hyp} that involves
simpler expressions: there is a universal constant $C > 0$ such that 
\eqn{e:induct-hyp-simple}
{ \gamma(n',x) 
  \le \begin{cases}
      A n' (\log n')^{-\xi} & \text{when $\| x \| \le \sqrt{n'}$;} \\
      A n' (\log n')^{-\xi} \, 
          \left( 1 + \frac{C \xi}{\log n'} \frac{\| x \|^2}{n'} \right)
          & \text{when $\|x\| > \sqrt{n'}$.} 
      \end{cases} }
These bounds follow from the elementary inequalities:
\eqn{e:w-ineq}
{ \left( 1 - \frac{\log ( \| y \|^2/k )}{\log k} \right)^{-\xi}
  \le 1 + \frac{C \xi}{\log k} \frac{\| y \|^2}{k}, 
     \qquad \sqrt{k} < \| y \| \le k/2,\, 0 \le \xi \le 1/2, }
and 
\eqn{e:log-k-ineq}
{ (\log k)^{\xi}
  \le 1 + \frac{C \xi}{\log k} \, \frac{\| y \|^2}{k}, 
     \qquad \| y \| > k/2,\, 0 \le \xi \le 1/2. }

Since $\gamma(n,x) \le n$, we claim that it suffices to prove the statement of the theorem 
when $\| x \| \le n \exp ( - \frac{1}{2} A^{1/\xi} )$. Indeed, suppose that 
$\| x \| > n \exp ( - \frac{1}{2} A^{1/\xi} )$. Then if we have
$\| x \| \le \sqrt{n}$, then also $\exp ( - \frac{1}{2} A^{1/\xi} ) < n^{-1/2}$,
and hence $(\log n)^{-\xi} > A^{-1}$, and hence
\eqnst
{ \gamma(n,x)
  \le n 
  = A n A^{-1}
  < A n (\log n)^{-\xi}. }
On the other hand, if we have $\sqrt{n} < \| x \| \le n/2$, then 
$\| x \|^2 > n^2 \exp ( - A^{1/\xi} )$ implies that 
$A^{-1} < (\log n^2/\| x\|^2)^{-\xi}$, and hence
\eqnsplst
{ \gamma(n,x)
  &\le n
  = A n A^{-1}
  < A n \left( \log n + \log \frac{n}{\| x \|^2} \right)^{-\xi} \\
  &= A n (\log n)^{-\xi} \left( 1 - \frac{\log \| x \|^2/n }{\log n} \right)^{-\xi}. }

Hence from now on we assume the upper bound $\| x \| \le n \exp ( - \frac{1}{2} A^{1/\xi} )$.
Note that due to the second inequality in \eqref{e:A-ineq}, this implies
\eqn{e:x-norm-ineq}
{ \| x \| 
  \le n \exp \left( - \frac{1}{2} A^{1/\xi} \right)
  \le \frac{n}{\sqrt{n_0}}. }
Given such $x$, fix
\eqn{e:new-(7.3)}
{ \delta 
  = \min \Big\{ \eta : \eta \ge \min \left\{ \delta_0,\, \frac{n}{\|x\|^2} \right\},\, 
    \text{$\eta n$ is an integer} \Big\}. }
Observe that 
\eqn{e:delta-ineq}
{ \delta 
  \le \delta_0 + \frac{1}{n}
  \le \delta_0 + \frac{\delta_0}{n_0}
  \le 2 \delta_0, }
and similarly, using \eqref{e:x-norm-ineq}, we have
\eqn{e:delta-ineq2}
{ \delta 
  \le \frac{n}{\| x \|^2} + \frac{1}{n}
  \le \frac{n}{\| x \|^2} + \frac{1}{n_0} \frac{n}{\| x \|^2}
  \le \frac{2 n}{\| x \|^2}. }
We also have
\eqn{e:delta-n-ineq}
{ \delta n 
  \ge \min \left\{ \delta_0 n,\, \frac{n^2}{\| x \|^2} \right\}
  \ge \min \left\{ n_0,\, n_0 \right\}
  = n_0. }
Finally, note that 
\eqnst
{ \| x \|
  \le \sqrt{2n/\delta}, }
which can be seen by considering separately the cases $\delta_0 \le n / \|x\|^2$
and $\delta_0 > n / \|x\|^2$.
Therefore, Theorem \ref{thm:intersect} can be applied to $(x,n)$.

Consider the sequences
\eqnst
{ (0, \dots, K), (K, \dots, 2K), \dots, ((N-1)K, \dots, NK), }
where $n = NK \delta n + K'\delta n + \tilde{n}$, with 
$0 \le K' < K$, $\delta n \le \tilde{n} < 2 \delta n$.
Fix any integer $m \ge 2n$, and define 
\eqnst
{ \gamma_m(n,x)
  = \E_{\cT_{n,m}} \big[ \Reff \big( (o,0) \conn \Phi(V_n) \big) \,\big|\,
    \Phi(V_n) = (x,n) \big], }
where the resistance is considered in the graph $\Phi(\cT_{n,m})$, so that 
$\gamma(n,x) = \sup_{m \ge 2n} \gamma_m(n,x)$.
We bound $\gamma_m(n,x)$ by estimating 
\eqn{e:i-expect}
{ \E \big[ \Reff ( \Phi(X_i) \conn \Phi(X_{i+K}) \,\big|\, \Phi(V_n) = (x,n) \big] }
for each $i = 0, K, 2K, \dots, (N-1)K$ and adding the estimates, using the
triangle inequality for resistance \eqref{e:triangle}, and also adding the 
estimates for the final stretch from $NK \delta n$ to $n$.

Fix $0 \le i \le N-1$. We split the expectation in \ref{e:i-expect} according to 
whether $\cA(i) \cap \cB'(i,c'_0)$ occurred or not. By Theorem \ref{thm:good-blocks}
we have that 
\eqnspl{e:i-expect-bnd}
{ &\E \big[ \Reff ( \Phi(X_i) \conn \Phi(X_{i+K}) ) \bone_{\cA(i) \cap \cB'(i,c'_0)} \,\big|\,
       \Phi(V_n) = (x,n) \big] \\
  &\quad \le \frac{3 K \max_{1 \le k \le \delta n} \gamma(k)}{4} 
       \bP ( \cA(i) \cap \cB'(i,c'_0) \,|\, \Phi(V_n) = (x,n) ) \\
  &\quad \le \frac{3 K A (\delta n) \, (\log (\delta n))^{-\xi}}{4}
       \bP ( \cA(i) \cap \cB'(i,c'_0) \,|\, \Phi(V_n) = (x,n) ), }
where in the last step we used the induction hypothesis.

We now estimate the expectation on the event when either $\cA(i)$ or $\cB'(i,c'_0)$ fail.
Recall that we may write:
\eqnst
{ \cA(i)^c \cup \cB'(i,c'_0)^c
  = \bigcup_{a=1}^6 \cE_{(a)} \cup (\cA(i) \cap \cB'(i,c'_0)^c), }
where $\cE_{(a)}$ was defined in Section \ref{sec:tree-bad}. We need a few lemmas
to estimate the contribution from these terms.

\begin{lemma}
\label{lem:new-Lemma7.1}
There exists $C'_6 > 0$ such that, assuming the induction hypothesis, for any
$\delta n/4 \le k \le 2 \delta n$ we have
\eqnst
{ \bar\gamma(k; (x,n))
  \le \left( 1 + \frac{C'_6 \xi}{\log \delta n} \right) A k (\log k)^{-\xi}, }
where $\bar\gamma$ is defined in \eqref{e:bar-gamma}.
\end{lemma}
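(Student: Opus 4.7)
The plan is to recognise $\bar\gamma(k;(x,n))$ as a conditional expectation of the random walk, apply the induction hypothesis in its additive form \eqref{e:induct-hyp-simple}, and then invoke Proposition \ref{prop:variance}(i) to control the resulting second moment.

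First I would observe that the weights $\bp^k(o,y)\bp^{n-k}(y,x)/\bp^n(o,x)$ are the conditional probabilities $\bP(S(k)=y\,|\,S(n)=x)$, so that
\eqnst
{ \bar\gamma(k;(x,n))
  = \E \Big[ \gamma(k, S(k)) \,\Big|\, S(n)=x \Big]. }
By \eqref{e:induct-hyp-simple}, and using that the additive correction is non-negative (so that it also dominates the bound in the regime $\|y\|\le \sqrt{k}$), we get the pointwise bound
\eqnst
{ \gamma(k,y) \le A k (\log k)^{-\xi} \left( 1 + \frac{C\xi}{\log k} \frac{\|y\|^2}{k} \right) }
valid for every $y$. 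Taking conditional expectation,
\eqnst
{ \bar\gamma(k;(x,n))
  \le A k (\log k)^{-\xi} \left( 1 + \frac{C\xi}{k \log k} \,
    \E \big[ \|S(k)\|^2 \,\big|\, S(n)=x \big] \right). }

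Next I would check the hypotheses of Proposition \ref{prop:variance}(i). We have $k \ge \delta n/4 \ge n_0/4 \ge k_1$ by our choice of $n_0$, and $k \le 2\delta n \le 4\delta_0 n \le n$ by \eqref{e:old-(7.1)}. For the spatial condition, from \eqref{e:delta-ineq2} we have $\|x\|^2 \le 2n/\delta$, and from $k \le 2\delta n$ we get $\delta \ge k/(2n)$, so
\eqnst
{ \|x\|^2 \le \frac{2n}{\delta} \le \frac{4n^2}{k}, \qquad\text{i.e.,}\qquad \|x\| \le \frac{2n}{\sqrt{k}} \le \frac{4n}{\sqrt{k}}. }
Proposition \ref{prop:variance}(i) then gives $\E[\|S(k)\|^2 \mid S(n)=x] \le Ck$.

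Combining these, and using that $k \ge \delta n/4$ implies $\log k \ge \log(\delta n) - \log 4 \ge \tfrac{1}{2}\log(\delta n)$ for $\delta n$ sufficiently large (which holds since $\delta n \ge n_0$), we get $1/\log k \le 2/\log(\delta n)$, and hence
\eqnst
{ \bar\gamma(k;(x,n))
  \le A k (\log k)^{-\xi} \left( 1 + \frac{C'_6 \xi}{\log(\delta n)} \right), }
for a suitable constant $C'_6$, completing the proof.

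The only real point to watch is the spatial condition $\|x\| \le 4n/\sqrt{k}$ in Proposition \ref{prop:variance}(i); this is exactly why $\delta$ was defined in \eqref{e:new-(7.3)} to depend on $\|x\|$ in the large-$\|x\|$ regime. Otherwise the argument is a clean plug-in of the induction hypothesis combined with the random-walk variance estimate.
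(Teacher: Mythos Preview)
Your proof is correct and follows essentially the same approach as the paper: apply the induction hypothesis in the form \eqref{e:induct-hyp-simple}, then use Proposition \ref{prop:variance}(i) to bound the conditional second moment $\E[\|S(k)\|^2 \mid S(n)=x]$. The paper packages this slightly differently by introducing a function $H_1(\xi)$ with $H_1(0)=1$ and bounding $H_1'(\xi)$, but the underlying computation and the verification of the hypotheses of Proposition \ref{prop:variance}(i) are the same as yours.
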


\begin{proof}
By the induction hypothesis and its consequence \eqref{e:induct-hyp-simple} we have
\eqnsplst
{ \bar\gamma(k; (x,n))
  &\le A k (\log k)^{-\xi} H_1(\xi), }
where
\eqnsplst
{ H_1(\xi)
  &= \sum_{y : \| y \| \le \sqrt{k}} 
      \frac{\bp^k(o,y) \bp^{n-k}(y,x)}{\bp^n(o,x)} \\
  &\quad + \sum_{y : \| y \| > \sqrt{k}}
      \frac{\bp^k(o,y) \bp^{n-k}(y,x)}{\bp^n(o,x)} 
      \left( 1 + \frac{C \xi}{\log k} \frac{\|y\|^2}{k} \right). }
We have $H_1(0) = 1$ and 
\eqnsplst
{ H'_1(\xi)
  &= \frac{C}{\log k} \sum_{y : \| y \| > \sqrt{k}}
     \frac{\bp^k(o,y) \bp^{n-k}(y,x)}{\bp^n(o,x)} \frac{\| y \|^2}{k} \\
  &\le \frac{C}{\log k} \frac{1}{k} \E \big[ \| S(k) \|^2 \,\big|\, S(n) = x \big]. }
Since $k \ge \delta n / 4 \ge n_0 / 4 \ge k_1$ and 
$\| x \| \le \sqrt{2n/\delta} \le \sqrt{2} n / \sqrt{\delta n} \le 4n / \sqrt{k}$, 
we can apply Proposition \ref{prop:variance}(i) to the expectation on the right 
hand side to see that $H'_1(\xi) \le C/\log k$, and the lemma follows.
\end{proof}

\begin{lemma}
\label{lem:new-Lemma7.2}
There exists $C'_6 > 0$ such that, assuming the induction hypothesis, for all
$s' \ge s$ with $s' \not= 2, K+3$ we have
\eqnsplst
{ &\E \Big[ \Reff ( ( z_{s'-1}, T_{s'-1}) \conn (z_{s'}, T_{s'}) ) \bone_{\cE_{(5)}^s} 
     \,\Big|\, \cG_{\mathrm{tree}} \Big] \\
  &\qquad \le \left( 1 + \frac{C'_6 \xi}{\log \delta n} \right) A t_{s'} (\log t_{s'})^{-\xi}
     \bP ( \cE_{(5)}^s \,|\, \cG_{\mathrm{tree}} ). }
\end{lemma}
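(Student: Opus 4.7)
The plan is to invoke Lemma \ref{lem:5.1} to reduce the left-hand side to a sum involving $\gamma(t_{s'}, y)$ against an explicit random walk bridge factor, and then apply the simpler form \eqref{e:induct-hyp-simple} of the induction hypothesis. Writing $\gamma(t_{s'}, y) \le A t_{s'}(\log t_{s'})^{-\xi}$ when $\|y\| \le \sqrt{t_{s'}}$, and $\gamma(t_{s'}, y) \le A t_{s'}(\log t_{s'})^{-\xi}\bigl(1 + \frac{C\xi}{\log t_{s'}}\frac{\|y\|^2}{t_{s'}}\bigr)$ otherwise, the constant-$1$ portion of this estimate, together with \eqref{e:old-(5.2)} for the marginal, produces the principal term $A t_{s'}(\log t_{s'})^{-\xi} \, \bP(\cE_{(5)}^s \mid \cG_{\mathrm{tree}})$. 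What remains is a correction coming from the piece $\frac{C\xi}{\log t_{s'}}\frac{\|y\|^2}{t_{s'}}$ on the set $\{\|y\| > \sqrt{t_{s'}}\}$, and the target is to bound it by $\frac{C'_6 \xi}{\log \delta n}$ times the principal term.

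To control the correction I would identify the accompanying random walk factor as a bridge second-moment expression. In the subcase $s' = s$ (with $s' \neq 2, K+3$), after interpreting $z$ as the walk position at time $T_{s-1}$ and $y$ as the increment over $[T_{s-1}, T_s]$, the correction takes the form $\frac{C\xi}{\log t_s} A t_s(\log t_s)^{-\xi}$ times $\sum_z \bq_s(z)\, \E\bigl[\|\tilde S(t_s)\|^2 / t_s \cdot \bone_{\|\tilde S(t_s)\|>\sqrt{t_s}} \bigm| \tilde S(n-T_{s-1})=x-z\bigr]$, where $\tilde S(k):=S(T_{s-1}+k)-S(T_{s-1})$. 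Proposition \ref{prop:variance}(ii) bounds the inner conditional second moment by $C t_s \cdot \bP(\|\tilde S(t_s)\|>\sqrt{t_s}\mid\tilde S(n-T_{s-1})=x-z)$, and reassembling via \eqref{e:old-(5.2)} yields $C\bP(\cE_{(5)}^s\mid\cG_{\mathrm{tree}})$. In the subcase $s'>s$ the same recipe applies, except that now the step of size $>\sqrt{t_s}$ and the increment of length $t_{s'}$ lie on disjoint time intervals, so Proposition \ref{prop:variance}(iii) (with $k=t_s$ and $k'=t_{s'}$, modulo the intermediate offset) delivers the required bound.

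The main obstacle is verifying the range hypotheses of Proposition \ref{prop:variance} uniformly in the parameters. One must check $k_1 \le t_s \le \delta_1(n-T_{s-1})$, which follows from $\delta n \ge n_0 \ge 4 k_1$ combined with $2\delta_0 \le \delta_1$ from \eqref{e:old-(7.1)}, using that $n-T_{s-1}$ is of order $n$ since $s\le K+4$ and $K\delta\le 1/2$. One also needs the norm constraint $\|x-z\|\le 4(n-T_{s-1})/\sqrt{t_s}$: the support of $\bq_s$ confines $\|z\|\le\sum_{r<s}\sqrt{t_r}\le K\sqrt{2\delta n}$, while $\|x\|\le\sqrt{2n/\delta}$ by the choice of $\delta$ in \eqref{e:new-(7.3)}, and since $\delta n \le n/2$ these combine to give the required bound. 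Once these verifications are in place, the universal constants from Proposition \ref{prop:variance} and \eqref{e:induct-hyp-simple} combine into the claimed $C'_6$.
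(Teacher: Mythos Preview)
Your proposal is correct and follows essentially the same approach as the paper: invoke Lemma~\ref{lem:5.1}, apply the simplified induction hypothesis \eqref{e:induct-hyp-simple}, recognize the main term via \eqref{e:old-(5.2)}, and bound the correction as a bridge second moment using Proposition~\ref{prop:variance}(ii) for $s'=s$ and (iii) for $s'>s$, after verifying the range hypotheses from $\delta n \ge n_0 \ge 4k_1$, $2\delta_0 \le \delta_1$, and the bound on $\|x-z\|$. The only cosmetic difference is that the paper packages the argument by introducing an auxiliary function $H_2(\xi)$ (respectively $H_3(\xi)$) with $H_2(0)=\bP(\cE^s_{(5)}\mid\cG_{\mathrm{tree}})$ and bounds $H_2'(\xi)$, exploiting that the $\xi$-dependence in \eqref{e:induct-hyp-simple} is affine; your direct splitting into principal plus correction terms is equivalent.
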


\begin{proof}
We first consider the case $s' = s$ and $s' \not= 2, K+3$. 
Appealing to Lemma \ref{lem:5.1} and using consequence \eqref{e:induct-hyp-simple} 
of the induction hypothesis,
we get that the expectation in the claim of the lemma is at most 
$A t_s (\log t_s)^{-\xi} H_2(\xi)$, where
\eqnsplst
{ H_2(\xi)
  = \sum_z \bq_s(z) \sum_{y : \| y \| > \sqrt{t_s}} 
    \frac{\bp^{t_s}(o,y) \bp^{n-T_s+T_0}(y,x-z)}{\bp^n(o,x)}
    \left( 1 + \frac{C \xi}{\log t_s} \frac{\| y \|^2}{t_s} \right). }
By \eqref{e:old-(5.2)}, we have 
\eqnst
{ H_2(0)
  = \bP ( \cE^s_{(5)} \,|\, \cG_{\mathrm{tree}} ). }
Similarly to the previous lemma, we have
\eqn{e:H'_2}
{ H'_2(\xi)
  = \frac{C}{\log t_s} \, \sum_{z} \bq_s(z) \sum_{y : \| y \| > \sqrt{t_s}}
    \frac{\bp^{t_s}(o,y) \bp^{n-T_s+T_0}(z+y,x)}{\bp^n(o,x)} \frac{\| y \|^2}{t_s}. }
Let us multiply and divide by $\bp^{n-T_{s-1}+T_0}(o,x-z)$, which allows us to rewrite
\eqref{e:H'_2} as
\eqnspl{e:H'_2-alt}
{ &\frac{C}{\log t_s} \, \frac{1}{t_s} \sum_{z} \bq_s(z) 
     \frac{\bp^{n-T_{s-1}+T_0}(o,x-z)}{\bp^n(o,x)} \\
  &\qquad \times \sum_{y : \| y \| > \sqrt{t_s}} \| y \|^2 
     \frac{\bp^{t_s}(o,y) \bp^{n-T_s+T_0}(y,x-z)}{\bp^{n-T_{s-1}+T_0}(o,x-z)}. }
Let us now fix $z$. We want to apply Proposition \ref{prop:variance}(ii) to the 
sum over $y$ in \eqref{e:H'_2-alt}. For this, observe that 
\eqnsplst
{ \| x - z \| 
  &\le \| x \| + \| z \|
  \le \sqrt{2n/\delta} + (K+4) \sqrt{\delta n}
  + n \left( \frac{\sqrt{2}}{\sqrt{\delta n}} + \frac{\delta (K+4)}{\sqrt{\delta n}} \right) \\
  &\le n \frac{3}{\sqrt{\delta n}}, }
where in the last step we used \eqref{e:old-(7.1)}. This implies that we have
$\| x- z \| \le 3 n / \sqrt{\delta n} \le 3 n / \sqrt{t_s}$. We also have
$t_s \ge \delta n / 4 \ge n_0 / 4 \ge k_1$, where $k_1$ is the constant in 
Proposition \ref{prop:variance}. In addition, $t_s \le \delta n \le \delta_1 n$,
due to \eqref{e:old-(7.1)}. Hence we can apply Proposition \ref{prop:variance}(ii).
This gives:
\eqnsplst
{ &\sum_{y : \| y \| > \sqrt{t_s}} \| y \|^2 
     \frac{\bp^{t_s}(o,y) \bp^{n-T_s+T_0}(y,x-z)}{\bp^{n-T_{s-1}+T_0}(o,x-z)} \\
  &\qquad = \E \Big[ \| S(t_s) \|^2 \, \bone_{\| S(t_s) \| > \sqrt{t_s}} \,\Big|\, S(n) = x-z \Big] \\
  &\qquad \le C t_s \bP \Big( \| S(t_s) \| > \sqrt{t_s} \,\Big|\, S(n) = x-z \Big) \\
  &\qquad = C t_s \sum_{y : \| y \| > \sqrt{t_s}} 
     \frac{\bp^{t_s}(o,y) \bp^{n-T_s+T_0}(y,x-z)}{\bp^{n-T_{s-1}+T_0}(o,x-z)}. }
Substituting this bound back into \eqref{e:H'_2-alt} and cancelling the 
factors $\bp^{n-T_{s-1}+T_0}(o,x-z)$, we get
\eqnsplst
{ H'_2(\xi)
  &\le \frac{C}{\log t_s} \sum_{\substack{z, y : \\ \| y \| > \sqrt{t_s}}}
      \bq_s(z) \frac{\bp^{t_s}(o,y) \bp^{n-T_s+T_0}(y,x-z)}{\bp^n(o,x)} \\
  &= \frac{C}{\log t_s} \bP ( \cE^s_{(5)} \,|\, \cG_{\mathrm{tree}} ). }
This gives the statement of the lemma in the case $s' = s$.

The case $s' > s$ is similar. Appealing to the last statement of Lemma \ref{lem:5.1}
yields that the required quantity is at most 
$A t_{s'} (\log t_{s'})^{-\xi} H_3(\xi)$, where
\eqnsplst
{ H_3(\xi)
  &= \sum_{\substack{z, y_s, y : \\ \| y_s \| > \sqrt{t_s}}}
    \bq_s(z) \frac{\bp^{t_s}(o,y_s) \bp^{t_{s'}}(o,y) \bp^{n-t_{s'}-T_s}(y,x-z-y_s)}{\bp^n(o,x)} \\
  &\qquad \times \left( 1 + \frac{C \xi}{\log t_{s'}} 
    \frac{\| y \|^2}{t_{s'}} \, \bone_{\| y \| > \sqrt{t_{s'}}}  \right). }
Setting $\xi = 0$ and performing the sum over $y$, we see using \eqref{e:old-(5.2)} that 
$H_3(0) = \bP ( \cE^s_{(5)} \,|\, \cG_{\mathrm{tree}} )$. The derivative 
$H'_3(\xi)$ can be analyzed similarly to $H'_2(\xi)$, this time appealing to 
Proposition \ref{prop:variance}(iii). This gives 
$H'_3(\xi) \le (C/\log t_{s'}) \bP( \cE^s_{(5)} \,|\, \cG_{\mathrm{tree}} )$, 
and proves the statement of the lemma when $s' > s$.
\end{proof}

We now assemble our resistance bounds on the event $\cA(i)^c \cup \cB'(i,c'_0)^c$.
Lemmas \ref{lem:old-Lemma4.1} and \ref{lem:new-Lemma7.1} and the induction hypothesis
yield
\eqnsplst
{ &\E \Big[ \Reff ( \Phi(X_i) \conn \Phi(X_{i+K}) ) \,\Big|\, \cE_{(1)},\, \Phi(V_n) = (x,n) \Big] \\
  &\qquad \le A K \left( 1 + \frac{C'_6 \xi}{\log \delta n} \right)
      \delta n (\log \delta n)^{-\xi}. }
Lemmas \ref{lem:old-Lemma4.2} and \ref{lem:new-Lemma7.1} yield
\eqnsplst
{ &\E \Big[ \Reff ( \Phi(X_i) \conn \Phi(X_{i+K}) ) \,\Big|\, \cE_{(2)},\, \ell_1,\,
      \Phi(V_n) = (x,n) \Big] \\
  &\qquad \le A \left( 1 + \frac{C'_6 \xi}{\log \delta n} \right)
      \Big[ (\ell_1 - i \delta n) (\log (\ell_1 - i \delta n))^{-\xi} + \\
  &\qquad\qquad + ((i+1) \delta n - \ell_1) (\log ((i+1) \delta n - \ell_1))^{-\xi} \\
  &\qquad\qquad + (K-1) \delta n (\log \delta n)^{-\xi} \Big] + 1. }
Writing $\nu = \frac{\ell_1}{\delta n} - i \in [1/4,3/4]$, the first two terms inside the
square brackets can be written as
\eqnsplst
{ &\delta n (\log \delta n)^{-\xi} 
    \left[ \nu \left( \frac{\log (\nu \delta n)}{\log \delta n} \right)^{-\xi}
    + (1 - \nu) \left( \frac{\log (1-\nu) \delta n}{\log \delta n} \right)^{-\xi} \right] \\
  &\qquad = \delta n (\log \delta n)^{-\xi} 
    \left[ \nu \left( 1 + \frac{\log \nu}{\log \delta n} \right)^{-\xi}
    + (1 - \nu) \left( 1 + \frac{\log (1-\nu)}{\log \delta n} \right)^{-\xi} \right] \\
  &\qquad \le \delta n (\log \delta n)^{-\xi} 
    \left[ \nu \left( 1 - \frac{2 \xi \log \nu}{\log \delta n} \right)
    + (1 - \nu) \left( 1 - \frac{2 \xi \log (1-\nu)}{\log \delta n} \right) \right] \\
  &\qquad = \delta n (\log \delta n)^{-\xi}
    \left[ 1 + \frac{2 \xi}{\log \delta n} 
    \left( -\nu \log \nu - (1-\nu) \log (1-\nu) \right) \right] \\
  &\qquad \le \delta n (\log \delta n)^{-\xi}
    \left[ 1 + \frac{2 (\log 2) \xi}{\log \delta n} \right]. }
Hence, writing $C'_7 = 2 \log 2$, we have
\eqnsplst
{ &\E \Big[ \Reff ( \Phi(X_i) \conn \Phi(X_{i+K}) ) \,\Big|\, \cE_{(2)},\, \ell_1,\,
      \Phi(V_n) = (x,n) \Big] \\
  &\qquad \le A \left( 1 + \frac{(C'_6 + C'_7 + C'_6 C'_7) \xi}{\log \delta n} \right)
      K \delta n (\log \delta n)^{-\xi} + 1. }
Lemmas \ref{lem:old-Lemma4.3} and \ref{lem:new-Lemma7.1} give
\eqnsplst
{ &\E \Big[ \Reff ( \Phi(X_i) \conn \Phi(X_{i+K}) ) \,\Big|\, \cE_{(3)},\, \ell_1,\,
      \Phi(V_n) = (x,n) \Big] \\
  &\qquad \le A \left( 1 + \frac{C'_6 \xi}{\log \delta n} \right)
      \Big[ (\ell_1 - i \delta n) (\log (\ell_1 - i \delta n))^{-\xi} + \\
  &\qquad\qquad + ((i+1) \delta n - \ell_1) (\log ((i+1) \delta n - \ell_1))^{-\xi} \\
  &\qquad\qquad + (K - 1) \delta n (\log \delta n)^{-\xi} \Big] + 1 \\
  &\qquad \le A \left( K + \frac{C'_7 \xi}{\log \delta n} \right)
    \left( 1 + \frac{C'_6 \xi}{\log \delta n} \right) \delta n (\log \delta n)^{-\xi} + 1. }
Lemmas \ref{lem:old-Lemma4.4} and \ref{lem:new-Lemma7.1} give
\eqnsplst
{ &\E \Big[ \Reff ( \Phi(X_i) \conn \Phi(X_{i+K}) ) \,\Big|\, \cE_{(4)},\, \ell_1,\, \ell_2,\, 
      \Phi(V_n) = (x,n) \Big] \\
  &\qquad \le A \left( 1 + \frac{C'_6 \xi}{\log \delta n} \right)
      \Big[ (\ell_1 - i \delta n) (\log (\ell_1 - i \delta n))^{-\xi} + \\
  &\qquad\qquad + ((i+1) \delta n - \ell_1) (\log ((i+1) \delta n - \ell_1))^{-\xi} \\
  &\qquad\qquad + (K-2) \delta n (\log \delta n)^{-\xi} \\
  &\qquad\qquad + (\ell_2 - (i+K-1) \delta n) (\log (\ell_2 - (i+K-1) \delta n))^{-\xi} + \\
  &\qquad\qquad + ((i+K) \delta n - \ell_2) (\log ((i+K) \delta n - \ell_2))^{-\xi} \Big] + 2 \\
  &\qquad \le A \left( 1 + \frac{C'_6 \xi}{\log \delta n} \right)
     \left( K + \frac{2 C'_7 \xi}{\log \delta n} \right) \delta n (\log \delta n)^{-\xi} + 2. }
Lemmas \ref{lem:5.1} and \ref{lem:new-Lemma7.2} and the induction hypothesis give that 
for any $s = 1, \dots, K+4$ and any $s' = 1, \dots, K+4$ we have that
\eqnsplst
{ &\E \Big[ \Reff ( (z_{s'-1}, T_{s'-1}) \conn (z_{s'}, T_{s'}) ) \,\Big|\, 
     \cE^s_{(5)}, \cG_{\mathrm{tree}} \Big] \\
  &\qquad \le \begin{cases}
    1 & \text{if $s' = 2, K+3$;} \\
    A t_{s'} (\log t_{s'})^{-\xi} & \text{if $s' < s$, $s' \not= 2, K+3$;} \\
    A \left( 1 + \frac{C'_6 \xi}{\log \delta n} \right) t_{s'} (\log t_{s'})^{-\xi} 
      & \text{if $s' \ge s$, $s' \not= 2, K+3$.}
   \end{cases} }
By the triangle inequality for resistance we get that for all $s = 1, \dots, K+4$ we have
\eqnsplst
{ &\E \Big[ \Reff ( \Phi(X_i) \conn \Phi(X_{i+K}) ) \,\Big|\, 
      \cE^s_{(5)},\, \cG_{\mathrm{tree}} \Big] \\
  &\qquad \le A K \left( 1 + \frac{C'_6 \xi}{\log \delta n} \right)
      \left( 1 + \frac{2 C'_7 \xi}{\log \delta n} \right) \delta n (\log \delta n)^{-\xi} + 2. }
Hence 
\eqnsplst
{ &\E \Big[ \Reff ( \Phi(X_i) \conn \Phi(X_{i+K}) ) \,\Big|\, 
      \cE_{(5)},\, \ell_1,\, \ell_2,\, \Phi(V_n) = (x,n) \Big] \\
  &\qquad \le A K \left( 1 + \frac{C'_6 \xi}{\log \delta n} \right)
      \left( 1 + \frac{2 C'_7 \xi}{\log \delta n} \right) \delta n (\log \delta n)^{-\xi} + 2. }
Lemma \ref{lem:5.2} and the induction hypothesis (recall that 
$\cE_{(7)} = \cA(i) \cap \cB'(i,c'_0)^c$) imply that 
\eqnsplst
{ &\E \Big[ \Reff ( \Phi(X_i) \conn \Phi(X_{i+K}) ) \,\Big|\, 
      \cE_{(6)} \cup \cE_{(7)},\, \cG_{\mathrm{tree}} \Big] \\
  &\qquad \le A \left( K + \frac{2 C'_7 \xi}{\log \delta n} \right)
      \delta n (\log \delta n)^{-\xi} + 2. }
Putting the above estimates together implies that there exists $C'_8 = C'_8(K)$ such that 
\eqnsplst
{ &\E \Big[ \Reff ( \Phi(X_i) \conn \Phi(X_{i+K}) ) \,\Big|\, 
      \cA(i)^c \cup \cB'(i,c_0)^c,\, \Phi(V_n) = (x,n) \Big] \\
  &\qquad \le A \left( K + C'_8 \frac{\xi}{\log \delta n} \right)
      \delta n (\log \delta n)^{-\xi} + 2. }
With \eqref{e:i-expect-bnd} this gives
\eqnspl{e:i-expect-bnd-prob}
{ &\E \Big[ \Reff ( \Phi(X_i) \conn \Phi(X_{i+K}) ) \,\Big|\, \Phi(V_n) = (x,n) \Big] \\
  &\quad \le \frac{3 A K \delta n (\log \delta n)^{-\xi}}{4}
      \bP ( \cA(i) \cap \cB'(i,c'_0) ) \,|\, \Phi(V_n) = (x,n) ) \\
  &\quad\quad + \left[ A \left( K + C'_8 \frac{\xi}{\log \delta n} \right) 
      \delta n (\log \delta n)^{-\xi} + 2 \right] \\
  &\quad\quad\quad \times  
      \bP ( \cA(i)^c \cup \cB'(i,c'_0) \,|\, \Phi(V_n) = (x,n) ). }
Due to Theorem \ref{thm:intersect}, there exists a constant $c' = c'(K) \in (0,1)$ such that 
the expression in the right hand side of \eqref{e:i-expect-bnd-prob} is at most
\eqnsplst
{ A \delta n (\log \delta n)^{-\xi} 
    \left[ \frac{c' 3 K}{4 \log \delta n} + \left( 1 - \frac{c'}{\log \delta n} \right)
    \left( K + C'_8 \frac{\xi}{\log \delta n}  
    + \frac{2 \xi}{\log \delta n} \right) \right], }
where we bounded the remaining term $2 A^{-1} (\delta n)^{-1} (\log \delta n)^{\xi}$ by
\eqnst
{ 2 A^{-1} (\delta n)^{-1} (\log \delta n)^{\xi}
  \le \frac{2 \xi}{\log \delta n} \, \frac{n_0}{(\log n_0)^{1+\xi}} \, 
      \frac{(\log \delta n)^{1+\xi}}{\delta n}
  \le \frac{2 \xi}{\log \delta n} }
using the third inequality in \eqref{e:A-ineq} and \eqref{e:delta-n-ineq}.
We now choose $\delta_0$ and $\xi$ (depending only on $K = K_0$). In addition 
to \eqref{e:old-(7.1)} that we already required for $\delta_0$, let us also have
\eqn{e:new-(7.10)}
{ \delta_0 
  \le \frac{c'}{8 (C'_8 + 2)}, \qquad\qquad
  \big( K (2 \delta_0) + 4 \delta_0 \big) C'_6 \delta_0
  \le \frac{c'}{16} }
Let $0 < \xi \le 1/2$ satisfy:
\eqn{e:xi-ineq}
{ \xi \le \delta_0, \qquad\qquad
  \left( 1 - \frac{c'}{16 \, \log n} \right) 
  \left( 1 - \frac{\log (1/\delta_0)}{\log n} \right)^{-\xi}  
  \le 1. }
The second condition is satisfied for small $\xi$, since 
$n \ge 1/\delta_0^2$ implies that $(\log 1/\delta_0 )/\log n \le 1/2$, and hence
\eqnst
{ \left( 1 - \frac{\log 1/\delta_0}{\log n} \right)^{-\xi}
  \le 1 + 2^{1+\xi} \, \xi \, \frac{\log 1/\delta_0}{\log n}
  \le 1 + 2^{3/2} \, \xi \, \frac{\log 1/\delta_0}{\log n}. }
The first condition in \eqref{e:new-(7.10)} gives that 
\eqnspl{e:new-(7.12)}
{ &\E \Big[ \Reff ( \Phi(X_i) \conn \Phi(X_{i+K}) ) \,\Big|\, \Phi(V_n) = (x,n) \Big] \\
  &\qquad \le A K \left( 1 - \frac{c'}{8 \, \log \delta n} \right) \delta n (\log \delta n)^{-\xi} \\
  &\qquad = A n (\log \delta n)^{-\xi} K \delta \left( 1 - \frac{c'}{8 \, \log \delta n} \right) }
for $i = 0, K, \dots, (N-1)K$. For the final stretch, Lemmas \ref{lem:5.3} and
\ref{lem:new-Lemma7.1} and the induction hypothesis give that 
\eqnspl{e:new-(7.13)}
{ &\E \Big[ \Reff ( \Phi(X_{i^{\mathrm{last}}} ) \conn \Phi(V_n) \,\Big|\, \Phi(V_n) = (x,n) \Big] \\
  &\qquad \le K' A \delta n (\log \delta n)^{-\xi} \left( 1 + \frac{C'_6 \xi}{\log \delta n} \right)
      +  A \tilde{n} (\log \tilde{n})^{-\xi} \left( 1 + \frac{C'_6 \xi}{\log \delta n} \right). }
Since $K' < K$ and $\tilde{n} < 2 \delta n$, the right hand side of \eqref{e:new-(7.13)}
is at most:
\eqnspl{e:new-(7.13b)}
{ &A n (\log \delta n)^{-\xi} \left[ (K' \delta + \tilde{n}/n) 
     \left( 1 + \frac{C'_6 \xi}{\log \delta n} \right) \right] \\
  &\qquad \le A n (\log \delta n)^{-\xi} \left[ (K' \delta + \tilde{n}/n) 
      + ( K \delta + 2 \delta ) \frac{C'_6 \xi}{\log \delta n} \right]. }
Using \eqref{e:delta-ineq} and the second inequality in \eqref{e:new-(7.10)}, the
expression in \eqref{e:new-(7.13b)} is at most
\eqnst
{ A n (\log \delta n)^{-\xi} \left[ K' \delta + \tilde{n}/n 
      + \frac{c'}{16 \, \log \delta n} \right]. }
Let us sum \eqref{e:new-(7.12)} over $i = 0, K, \dots, (N-1)K$ using the triangle
inequality, and add \eqref{e:new-(7.13)}. This gives
\eqnspl{e:new-(7.14)}
{ \gamma(n,x)
  &= \sup_{m \ge 2n} \gamma_m(n,x) \\
  &\le A n (\log \delta n)^{-\xi} \Big[ N K \delta + K'\delta + \tilde{n}/n 
     - \frac{c'}{8 \, \log \delta n} 
     + \frac{c'}{16 \, \log \delta n} \Big] \\
  &= A n (\log \delta n)^{-\xi} \left( 1 - \frac{c'}{16 \, \log \delta n} \right), }
where we used that $N K \delta + K' \delta + \tilde{n}/n = 1$.

We can conclude the argument as follows. If in the definition \eqref{e:new-(7.3)} 
of $\delta$ we have $\delta_0 \le n / \| x \|^2$, then we have
\eqnsplst
{ ( \log \delta n )^{-\xi} \left( 1 - \frac{c'}{16 \, \log \delta n} \right) 
  &= (\log n)^{-\xi} \left( 1 - \frac{\log 1/\delta}{\log n} \right)^{-\xi} 
    \left( 1 - \frac{c'}{16 \, \log \delta n} \right) \\
  &\le (\log n)^{-\xi} \left( 1 - \frac{\log 1/\delta_0}{\log n} \right)^{-\xi} 
    \left( 1 - \frac{c'}{16 \, \log n} \right) \\
  &\le (\log n)^{-\xi}, }
due to the second requirement on $\xi$ in \eqref{e:xi-ineq}.
Thus the right hand side of \eqref{e:new-(7.14)} is at most $A n (\log n)^{-\xi}$.
When $\| x \| \le \sqrt{n}$, this is the claimed inequality, and when 
$\sqrt{n} < \| x \| \le \sqrt{n} / \delta_0$, it is stronger than than the 
claimed inequality.

On the other hand, if in \eqref{e:new-(7.3)} we have $\delta_0 > n / \| x \|^2$, then 
we have $\delta \ge n / \| x \|^2$ and hence $\log 1/\delta \le \log \| x \|^2/n$.
This implies that the right hand side of \eqref{e:new-(7.14)} is
\eqnsplst
{ A n ( \log \delta n )^{-\xi} \left( 1 - \frac{c'}{16 \, \log \delta n} \right) 
  &= A n (\log n)^{-\xi} \left( 1 - \frac{\log 1/\delta}{\log n} \right)^{-\xi} 
    \left( 1 - \frac{c'}{16 \, \log \delta n} \right) \\ 
  &\le A n (\log n)^{-\xi} \left( 1 - \frac{\log \| x \|^2/n}{\log n} \right)^{-\xi}, }
as claimed.
This completes the induction and the proof of Theorem \ref{thm:main}.

\end{document}